\numberwithin{equation}{section}
\newtheorem{theorem}{Theorem}[section]
\newtheorem{corollary}[theorem]{Corollary}
\newtheorem{lemma}[theorem]{Lemma}
\theoremstyle{definition}
\newtheorem{remark}[theorem]{Remark}
\numberwithin{equation}{section}
\title{Bijections between $t$-Core Partitions and $t$-Tuples}
\date{March 17, 2019}
\author[H. Zhong]{Hao Zhong}
\address{(H. Zhong) The School of Information Science and Technology, Chengdu University
of Technology, Chengdu, 610059, China}
\email{11435011@zju.edu.cn}
\keywords{core partitions, simultaneous cores, cores with distinct parts}
\subjclass[2010]{05A17, 11P81}
\begin{document}

\maketitle

\thispagestyle{empty}

\begin{abstract}
This note introduces some bijections relating core partitions and tuples of integers. We apply these bijections to count the number of cores with various types of restriction, including fixed number of parts, limited size of parts, parts divisible by some integer, and distinct parts. For example, we prove that the number of $2t$-core partitions into $l$ even parts equals the number of $t$-core partitions into $l$ parts. We also generalize one expression for simultaneous cores, which was given by Baek, Nam and Yu, recently. Subsequently, we use this expression to obtain recurrence satisfied by numbers of $(s,s+1,\cdots,s+r)-$core partitions for $s\geq1$.
\end{abstract}

\section{Introduction}

A partition of a positive integer $n$ is a nonincreasing sequence of positive integers $\lambda_1$, $\lambda_2$, $\cdots$, $\lambda_l$ with sum $n$. We denote $(\lambda_1$, $\lambda_2$, $\cdots$, $\lambda_l)$ by $\lambda$, and we also write $\lambda=(1^{\#_1}2^{\#_2}\cdots)$ where $\#_i=\#_i(\lambda)$ denotes the number of $\lambda_j$ that equals $i$. Each $\lambda$ is associated to a Ferrers diagram $[\lambda]$, which is a pattern of dots, with the $j$-th row having $\lambda_j$ dots. For each node, its hook is defined as the set consisting of the node itself, and all the nodes to the right of it in the same row together with the nodes below it in the same column. By counting the number of nodes in the hook, we define the hook length of a node. A hook of length $t$ is called a $t$-hook and it is obvious that there exist $t$ different types of $t$-hook. The diagram containing no $t$-hook is called a $t$-core, and the corresponding partition is called $t$-core partition.

Gordon and Kerber \cite{James1981} showed that by removing all the rim $t$-hooks from $[\lambda]$, we always finish up with the same $t$-core, i.e., each diagram has a uniquely determined $t$-core. This explains why such a diagram is called a $t$-core. They also proved that $[\lambda]$ is a $t$-core if and only if there is no dot whose hook length is divisible by $t$.

Core partitions of numerous types of additional restrictions have long been studied, since they are closely related to the representation of symmetric group \cite{James1981}, the theory of cranks \cite{Garvan1990}, Dyck-paths \cite{Amdeberhan2015, Anderson2002, Yang2015}, and Euler's theorem \cite{Straub2016}. To solve core problems, mathematicians provide many different tools, including $t$-abacus \cite{Anderson2002, James1981}, Hasse diagram \cite{Yan2016, Yang2015} and even ideas from quantum mechanics \cite{Johnson2015}.

This paper includes new proofs of some known theorems, which lead to several new results on core partitions. By giving different expressions for partitions, we aim to evaluate the cardinality of various types of core partitions.

Section 2 reviews three bijections relating $t$-cores and $t$-tuples, and also includes their applications. Furthermore, we count the number of core partitions in which the number of parts is limited, as well as the number of core partitions in which all parts are divisible by a fixed integer. As a consequence, we surprisingly find that the number of $2t$-core partitions into $l$ even parts equals the number of $t$-core partitions into $l$ parts. In Section 3, inspired by a recent expression for simultaneous core partitions (\cite{Baek2019}), we give a new representation for simultaneous cores, which is useful in studying the topics on $(s,s+1,\cdots,s+r)$-cores. In the last section, we conclude this note with some formulas for the core partitions with distinct parts.

\section{Bijections}

First, we set the notation. In this article, $r$, $s$ and $t$ represent positive integers. For a partition $\lambda=(\lambda_1,\lambda_2,\cdots,\lambda_l)$, let $\#(\lambda)=l$ and $\sigma(\lambda)=\sum_{i=1}^{l}\lambda_i$. Let $\mathcal{P}$ be the set of all partitions and $\mathcal{P}_t$ the set of all $t$-core partitions. We also introduce some notations from $q$-series, $$(a;q)_{k}:=\prod_{n=0}^{k-1}(1-aq^n)$$ and $$(a_1, a_2, \ldots, a_n,;q)_{k}:=\prod_{j=1}^{n}(a_j,q)_{k}.$$ Hence, $$\sum_{\lambda\in\mathcal{P}}q^{\sigma(\lambda)}=\frac{1}{(q;q)_{\infty}}.$$

The first bijection we have to mention was given by James and Kerber \cite{James1981}, answering how a core partition is derived from a regular partition.

\begin{theorem}[\cite{James1981}]\label{th:bij1}
There is a bijection $\Phi$: $\mathcal{P} \to \mathcal{P}^t \times \mathcal{P}_t$, $$\Phi(\lambda)=(\lambda^{(0)}, \lambda^{(1)}, \cdots, \lambda^{(t-1)}, \lambda^{(t)})$$ such that $\sigma(\lambda)=t\sum_{j=0}^{t-1}\sigma(\lambda^{(j)})+\sigma(\lambda^{(t)})$. Hence,
\begin{equation}\label{eq:gen1}
\sum_{\lambda\in\mathcal{P}_t}q^{\sigma(\lambda)}=\frac{(q^t;q^t)^{t}_{\infty}}{(q;q)_{\infty}}.
\end{equation}
\end{theorem}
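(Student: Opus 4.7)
The plan is to prove this via the $t$-abacus construction, which is the classical mechanism behind the James--Kerber bijection. I would first encode each partition $\lambda=(\lambda_1,\ldots,\lambda_l)$ by its beta-sequence (first-column hook lengths), i.e.\ the strictly decreasing set $B(\lambda)=\{\lambda_i+l-i:1\le i\le l\}\subset\mathbb{Z}_{\ge0}$, which determines $\lambda$ once the number of parts is fixed. Equivalently, place a bead at each position $b\in B(\lambda)$ on a one-runner abacus and an empty slot at each nonnegative integer not in $B(\lambda)$.

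Next I would distribute the beads among $t$ runners according to residues modulo $t$: a bead at position $b=tq+r$ with $0\le r\le t-1$ sits at height $q$ on runner $r$. Reading runner $r$ in isolation (after normalising the bead count) yields a partition $\lambda^{(r)}$, and the correspondence $\lambda\mapsto(\lambda^{(0)},\ldots,\lambda^{(t-1)})$ is the classical $t$-quotient map, known to be a bijection $\mathcal{P}\to\mathcal{P}^t$ once a canonical bead-count normalisation (say a total divisible by $t$) is chosen. The $t$-core $\lambda^{(t)}$ is then obtained by sliding every bead on every runner as far upward as possible, an operation which corresponds exactly to the successive removal of all rim $t$-hooks from $[\lambda]$; by the uniqueness result of Gordon--Kerber cited above, $\lambda^{(t)}$ is indeed a well-defined $t$-core.

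The key combinatorial identity to verify is that sliding one bead up one unit on any runner --- i.e.\ stripping a single rim $t$-hook --- decreases $\sigma(\lambda)$ by exactly $t$. Iterating, $\sigma(\lambda)-\sigma(\lambda^{(t)})$ equals $t$ times the total number of upward bead-slides required, and this total is by construction $\sum_{j=0}^{t-1}\sigma(\lambda^{(j)})$, since $\sigma(\lambda^{(j)})$ counts the aggregate gaps below beads on runner $j$. This yields the claimed size decomposition. The generating-function consequence then falls out directly from the bijection and the multiplicativity of $q^{\sigma}$:
\begin{equation*}
\frac{1}{(q;q)_\infty}=\sum_{\lambda\in\mathcal{P}}q^{\sigma(\lambda)}=\left(\sum_{\mu\in\mathcal{P}}q^{t\sigma(\mu)}\right)^{\!t}\sum_{\nu\in\mathcal{P}_t}q^{\sigma(\nu)}=\frac{1}{(q^t;q^t)_\infty^{t}}\sum_{\nu\in\mathcal{P}_t}q^{\sigma(\nu)},
\end{equation*}
and \eqref{eq:gen1} follows upon rearrangement.

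The step I expect to be the main obstacle is the bookkeeping around bead-count normalisation: the abacus construction produces the $t$-quotient only up to a simultaneous shift of the runner bead counts, and one must verify both that a canonical choice (e.g.\ total bead count $\equiv 0 \pmod t$) gives a well-defined map to $\mathcal{P}^t$ and that the size decomposition is invariant under this choice. Once this is handled, both the bijectivity and the $\sigma$-identity reduce to the straightforward runner-by-runner accounting sketched above.
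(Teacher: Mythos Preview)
Your proposal is correct and is exactly the classical James--Kerber $t$-abacus construction; the paper itself does not give a proof of this theorem but simply cites \cite{James1981}, so your sketch coincides with the argument in the cited reference rather than with anything in the paper. The bead-normalisation issue you flag is real but standard, and your outline of the size identity via single-bead slides is the usual way it is established.
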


(\ref{eq:gen1}) makes it possible to study the properties of core partitions by analytic methods. With the help of Ramanujan's theta function and modular forms, authors obtained many remarkable results (see \cite{Chern2016, Hirschhorn2009, Nath2013, Ono1995}).

The second bijection is derived from the first one by utilizing $t$-abacus.
\begin{theorem}[\cite{Anderson2002}, \cite{Olsson2007}, \cite{Xiong2015}, \cite{Xiong2016}]\label{th:bij2}

Let $\beta(\lambda)$ denote the set of hook lengths in the first column of $[\lambda]$, i.e., $\beta(\lambda)=\{\lambda_{j}+\#(\lambda)-j,1\leq j \leq \#(\lambda)\}$. Then there is a bijection $\Phi_1$: $\mathcal{P}_{t}$ $\to$ $\mathbb{Z}_{\geq0}^{t-1}$, $$\Phi_1(\lambda)=(n_0=0,n_1,\cdots,n_{t-1})$$ where $n_{j}=|\{a\in\beta(\lambda):a\equiv j\mod t\}|$ for $0\leq j\leq t-1$. Moreover, $\#(\lambda)=\sum_{j=0}^{t-1}n_{j}$ and $\sigma(\lambda)=\sum_{j=0}^{t-1}\Big(jn_{j}+t\binom{n_{j}}{2}\Big)-\binom{\#(\lambda)}{2}$. Hence,
\begin{equation}\label{eq:gen2}
\sum_{\lambda\in\mathcal{P}_t}q^{\sigma(\lambda)}=\sum_{n_{1},\ldots,n_{t-1}\geq0}q^{\sum_{j=1}^{t-1}(jn_{j}+t\binom{n_{j}}{2})-\binom{\sum_{j=1}^{t-1} n_{j}}{2}}.
\end{equation}
\end{theorem}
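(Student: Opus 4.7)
The plan is to view $\beta(\lambda)$ through the $t$-abacus. I would begin by noting two easy facts about the $\beta$-set $\beta(\lambda) = \{b_j = \lambda_j + \#(\lambda) - j : 1 \leq j \leq \#(\lambda)\}$: it consists of $\#(\lambda)$ distinct positive integers (since $\lambda_{\#(\lambda)} \geq 1$), and a direct computation gives
\[
\sum_{a \in \beta(\lambda)} a \;=\; \sigma(\lambda) + \binom{\#(\lambda)}{2}.
\]

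The crucial structural step is to establish the abacus characterization of $t$-cores: grouping $\beta(\lambda)$ by residue classes modulo $t$ (the ``runners''), $\lambda \in \mathcal{P}_t$ if and only if, for every $j \in \{0, 1, \ldots, t-1\}$, the elements of $\beta(\lambda)$ on runner $j$ form an initial segment $\{j, j+t, j+2t, \ldots, j + (n_j - 1)t\}$ for some $n_j \geq 0$. This rests on the classical fact (underlying the $t$-hook characterization cited from \cite{James1981}) that removing a rim $t$-hook from $[\lambda]$ is effected by replacing some $a \in \beta(\lambda)$ by $a - t$, provided $a \geq t$ and $a - t \notin \beta(\lambda)$. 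Since $0 \notin \beta(\lambda)$, the initial-segment condition on runner $0$ forces $n_0 = 0$ automatically.

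With this characterization, $\Phi_1$ is manifestly a bijection: its inverse sends $(n_1, \ldots, n_{t-1}) \in \mathbb{Z}_{\geq 0}^{t-1}$ to the unique partition whose $\beta$-set is $\bigsqcup_{j=1}^{t-1}\{j, j+t, \ldots, j + (n_j - 1)t\}$. Substituting this explicit description into the sum identity above yields
\[
\sigma(\lambda) \;=\; \sum_{j=0}^{t-1} \sum_{k=0}^{n_j - 1} (j + kt) - \binom{\#(\lambda)}{2} \;=\; \sum_{j=0}^{t-1}\Bigl(j n_j + t\binom{n_j}{2}\Bigr) - \binom{\#(\lambda)}{2},
\]
while $\#(\lambda) = |\beta(\lambda)| = \sum_{j=0}^{t-1} n_j$. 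Summing $q^{\sigma(\lambda)}$ over $(n_1, \ldots, n_{t-1}) \in \mathbb{Z}_{\geq 0}^{t-1}$ (the $j=0$ summand vanishing) produces \eqref{eq:gen2}. The main technical hurdle is pinning down the $\beta$-set/rim-hook correspondence cleanly; the route I would take is to describe the hook lengths in row $i$ of $[\lambda]$ as $\{b_i - c : 0 \leq c < b_i,\ c \notin \beta(\lambda)\}$ and then verify that the rim $t$-hook whose innermost corner corresponds to $c = b_i - t$ is removed precisely by the substitution $b_i \mapsto b_i - t$.
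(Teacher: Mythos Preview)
Your proof is correct and follows the classical $t$-abacus argument: you characterize $t$-cores by the ``initial segment on each runner'' condition, from which the bijection and the formulas for $\#(\lambda)$ and $\sigma(\lambda)$ drop out immediately. This is the standard route, and essentially the one implicit in the references \cite{Anderson2002,Olsson2007,Xiong2015,Xiong2016} that the statement cites.

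The paper, however, explicitly sets out to \emph{avoid} the abacus. It instead introduces the generation map $\phi_j\colon \lambda \mapsto (1^j,2^{\#_1(\lambda)},3^{\#_2(\lambda)},\ldots)$, observes that every $t$-core is obtained from the empty partition by iterated applications of the $\phi_j$, and proves by induction on $\#(\lambda)$ that the restricted map $\Phi_1|_l$ is a bijection onto the tuples with coordinate sum $l$. The size formula is likewise established inductively via the effect of $\phi_j$ on the tuple side (the operators $\psi_j$). What this buys the paper is an ordered-tree structure on $\mathcal{P}_t$ rooted at $\emptyset$, which is then reused to prove later results---most notably Theorem~\ref{th:mod} on $(dt-k)$-cores with parts divisible by $d$, where the argument hinges on matching the tree structures of two families of cores. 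Your abacus proof is shorter and more transparent for the theorem in isolation, but it does not set up that recursive machinery.
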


We say $\mu$ is generated by $\lambda$, if $\#_{j+1}(\mu)=\#_{j}(\lambda)$ for any $j\geq1$, namely, $\lambda_{i}=\mu_{i}-1$ if $\mu_{i}>1$, and we write $\lambda=gen(\mu)$. Let $\phi_{j}$ be a map: $\mathcal{P}$ $\to$ $\mathcal{P}$ such that $\phi_{j}(\lambda)=(1^{j},2^{\#_{1}(\lambda)},3^{\#_{2}(\lambda)},\cdots)$. Then $\lambda=gen(\mu)$ if and only if $\mu=\phi_{j}(\lambda)$ for some nonnegative integer $j$. Moreover, ($\phi_{j}(\lambda))_{1}=\lambda_{1}+1$, $\#(\phi_{j}(\lambda))=\#(\lambda)+j$, and $\sigma(\phi_{j}(\lambda))=\sigma(\lambda)+\#(\lambda)+j$. Notice that $\lambda$ is a $t$-core partition only if $gen(\lambda)$ is a $t$-core partition. We can give a proof of Theorem \ref{th:bij2} without $t$-abacus.

\begin{proof}[Proof of Theorem \ref{th:bij2}]
For a fixed $\lambda\in\mathcal{P}$, the first column of $[\lambda]$ is determined, so $\Phi_{1}$ is well-defined. $[\lambda]$ is a $t$-core where there is no hook length divisible by $t$, so $n_0=0$. As for the number of parts, $\#(\lambda)=|\beta(\lambda)|=\sum_{j=0}^{t-1}n_{j}$.

For two $t$-core partitions with the same image in $\mathbb{Z}_{\geq0}^{t-1}$, they share the same number of parts. Thus, if we prove that the restriction of $$\Phi_1\text{: }\{t\text{-core partitions into }l\text{ parts}\}\text{ }\to\text{ }\{(n_1,n_2,\cdots,n_{t-1})\in\mathbb{Z}_{\geq0}^{t-1}:\sum_{j=1}^{t-1}n_{j}=l\}$$ is bijective, then the bijection statement of the theorem is a straightforward corollary.

Let $\Phi_1|_{l}$ denote the restricted map. We prove the claim above by induction on $l$.

If $l=1$, then $\{t$-core partitions with one part$\}=\{(j),1\leq j\leq t-1\}$ and its image is $\{\Phi_1((j)),1\leq j\leq t-1\}=\{(0,\cdots,0,n_{j}=1,0,\cdots,0),1\leq j\leq t-1\}$. Therefore, $\Phi_1|_{1}$ is a bijection.

Suppose $\Phi_1|_{s}$ is bijective for $s<l$.

We first prove $\Phi_1|_{l}$ is injective. Let $\lambda$ and $\mu$ be two different $t$-core partitions into $l$ parts such that $\Phi_{1}(\lambda)=\Phi_{1}(\mu)$, $\lambda=\phi_{j}(gen(\lambda))$ and $\mu=\phi_{k}(gen(\mu))$ for some $0\leq j$, $k\leq t-1$.

Let $\mathcal{N}_{t}=\{(n_0,n_1,\cdots,n_{t-1})\in\mathbb{Z}_{\geq0}^{t}:n_0=0\}.$ Let $\varphi_{0}$ and $\varphi_{1}$ be two operators on $\mathbb{Z}_{\geq0}^{t}$ such that $$\varphi_{0}(n_{0},n_{1},\cdots,n_{t-1})=(n_{t-1},n_{0},n_{1},\cdots,n_{t-2}),$$
$$\varphi_{1}(n_{0},n_{1},\cdots,n_{t-1})=(n_{0},n_{1}+1,n_{2},\cdots,n_{t-1}).$$ Let $\psi_{j}=(\varphi_{1}\circ\varphi_{0})^{j}\circ\varphi_{0}$ for $j\geq0$, namely, $$\psi_{j}(n_0,\cdots,n_{t-1})=(n_{t-j-1},n_{t-j}+1,\cdots,n_{t-1}+1,n_{0},n_1,\cdots,n_{t-j-2}).$$ Then $\psi_{j}$ is a bijection from $\mathcal{N}_{t}\bigcap\{n_{t-1-j}=0\}$ to $\mathcal{N}_{t}\bigcap\{n_{j+1}=0,n_{i}>0$ for $0<i<j+1\}$ for $0\leq j\leq t-2$, and $\psi_{t-1}$ is a bijection from $\mathcal{N}_{t}$ to $\mathcal{N}_{t}\bigcap\{n_{i}>0$ for $i>0\}$. Thus
$\Phi_{1}(\phi_{j}(\lambda))=\psi_{j}(\Phi_{1}(\lambda))$ and $\phi_{j}(\mathcal{N}_{t})\bigcap\phi_{k}(\mathcal{N}_{t})=\varnothing$ if $j\neq k$.

Hence $\psi_{k}(\Phi_{1}(gen(\mu)))=\Phi_{1}(\mu)=\Phi_{1}(\lambda)=\psi_{j}(\Phi_{1}(gen(\lambda))).$ Moreover, $j=k$ and $\Phi_{1}(gen(\mu))=\Phi_{1}(gen(\lambda))$. If $\#(gen(\lambda))<l$, then $gen(\lambda)= gen(\mu)$ by assumption. Thus, $\lambda=\mu=\phi_{j}(gen(\lambda))$. This is a contradiction. If $\#(gen(\lambda))=l$, then $j=0$, so we can consider $gen(\cdots gen(gen(\lambda))\cdots)$ such that its number of parts is less than $l$. With a similar discussion, one can obtain $\lambda=\mu$. This is a contradiction.

Next, we prove that $\Phi_1|_{l}$ is surjective. For any $\textbf{n}=(n_0,n_1,\cdots,n_{t-1})\in\mathcal{N}_{t}$ with sum $l$, if $n_{j+1}=0$ and $n_{1}$, $n_{2}$, $\ldots$, $n_{j}>0$ for some $0\leq j\leq t-2$, then $\textbf{n}^{(j)}:=(n_{j+1},n_{j+2},\cdots,n_{t-1},n_{0},n_{1}-1,n_{2}-1,\cdots,n_{j}-1)$ is contained in $\mathcal{N}_{t}$, $|\textbf{n}^{(j)}|=l-j$ and $\psi_{j}(\textbf{n}^{(j)})=\textbf{n}.$ Otherwise, $n_{j}>0$ for $1\leq j\leq t-1$, so $\textbf{n}^{(t-1)}=(n_{0},n_{1}-1,n_{2}-1\cdots,n_{t-1}-1)\in\mathcal{N}_{t}$, $|\textbf{n}^{(t-1)}|=l-t+1$ and $\psi_{t-1}(\textbf{n}^{(t-1)})=\textbf{n}$. Hence for $1\leq j\leq t-1$, $\Phi_1(\phi_{j}(\Phi^{-1}_{1}(\textbf{n}^{(j)})))=\textbf{n}$. In case of $j=0$, we consider $\textbf{n}':=(n_{0},n_{1}-n_{m},n_{2}-n_{m},\cdots,n_{t-1}-n_{m})$ where $n_{m}$ is the minimum among $n_{1},\ldots,n_{t-1}$. Similarly, we have there exists $\lambda\in\mathcal{N}_{t}$ such that $\Phi_{1}(\lambda)=\textbf{n}'$, so $\Phi_{1}(\phi_{t-1}^{n_{m}}(\lambda))=\textbf{n}.$

Finally, we prove that $\sigma(\lambda)=\sum_{i=0}^{t-1}\Big(in_{i}+t\binom{n_{i}}{2}\Big)-\binom{\#(\lambda)}{2}$. It is obvious when $\#(\lambda)=1$. Assume that it holds for $\#(\lambda)<l$. Recall the proof of $\Phi_{1}$ is surjective. If $\Phi_{1}(gen(\lambda))=\textbf{n}^{(j)}$ for some $0\leq j\leq t-2$, then
\begin{eqnarray*}
% \nonumber to remove numbering (before each equation)
  \sigma(\lambda) &=& \sum_{i=0}^{t-2-j}\Big(in_{i+j+1}+t\binom{n_{i+j+1}}{2}\Big)+\sum_{i=1}^{j}\Big((t-j+i-1)(n_{i}-1)+t\binom{n_{i}-1}{2}\Big)\\
  &&-\binom{\#(\lambda)-j}{2} \\
   &=& \sum_{i=0}^{t-1}\Big(in_{i}+t\binom{n_{i}}{2}\Big)-\binom{\#(\lambda)}{2}.
\end{eqnarray*}
When $\Phi_{1}(gen(\lambda))=\textbf{n}^{(t-1)}$, we can also obtain this equation by considering $\textbf{n}'$.
\end{proof}

\begin{remark}
(1) If we regard the empty partition $\emptyset$ as a $t$-core partition for any positive integer $t$, then we obtain an ordered tree of $t$-cores with root $\emptyset$. For example, when $t=3$, we have

\usetikzlibrary{trees,arrows}
\tikzstyle{level 1}=[level distance=20mm]
\tikzstyle{level 2}=[level distance=25mm]
\tikzstyle{level 3}=[level distance=25mm]
\tikzstyle{level 4}=[level distance=25mm]
\tikzstyle{level 5}=[level distance=25mm]
\begin{tikzpicture}[grow=right,->]
  \node {$(0,0,0)$}
    child {node {$(0,1,1)$}
      child {node {$(0,2,2)$}
       child {node {$(0,3,3)$}
        child {node {$(0,4,4)$}
         child {node {$\cdots$}
         }
        }
       }
      }
    }
    child {node {$(0,1,0)$}
      child {node{$(0,2,1)$}
       child {node {$(0,3,2)$}
        child {node {$(0,4,3)$}
         child {node {$\cdots$}
         }
        }
       }
      }
      child {node{$(0,0,1)$}
       child {node {$(0,1,2)$}
        child {node {$(0,2,3)$}
         child {node {$\cdots$}
         }
        }
       }
       child {node {$(0,2,0)$}
        child {node {$(0,3,1)$}
         child {node {$\cdots$}
         }
        }
        child {node {$(0,0,2)$}
         child {node {$\cdots$}
         }
         child {node {$\cdots$}
         }
        }
       }
      }
    };
\end{tikzpicture}

 \noindent where each vertex is the image of one $t$-core in $\mathcal{N}_{t}$ and the children of the vertex represent the cores generated by that vertex.

(2) $\phi_{t-1}^{j}(\emptyset)$ is a $t$-core partition for $j\geq0$, so there are infinitely many $t$-cores for $t\geq2$.

(3) Due to the fact $\sigma(\lambda)=\sum_{i=0}^{t-1}\Big(in_{i}+t\binom{n_{i}}{2}\Big)-\binom{\sum n_{i}}{2}$, we have $\beta(\lambda)=\{(i-1)t+j, 1\leq i\leq n_{j}, 1\leq j\leq t-1\}$. In particular, $a\in\beta(\lambda)$ and $a>t$ only if $a-t\in\beta(\lambda)$.
\end{remark}

Recall the proof of Theorem \ref{eq:gen2}. It helps us to obtain some properties of core partitions with restriction on the number of parts.

\begin{theorem}\label{th:numberofparts}
Let $l$ be a positive integer. Then we have the following.

(1) The number of $t$-core partitions into $l$ parts is $\binom{l+t-2}{l}.$

(2) The number of $t$-core partitions into at most $l$ parts is $\binom{l+t-1}{l}.$

(3) The number of $t$-core partitions with $l$ parts greater than $1$ and $j$ parts equal to $1$ is
\begin{equation*}
\begin{cases}
\binom{l+t-3}{l}&\text{ if }0\leq j\leq t-2,\\
\binom{l+t-2}{l}&\text{ if }j=t-1,\\
0&\text{ if }j>t-1.
\end{cases}
\end{equation*}

(4) The number of $t$-core partitions into $l$ parts such that the smallest part equals $i$ is $\binom{l+t-i-2}{l-1}$.
\end{theorem}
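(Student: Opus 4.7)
The plan is to deduce all four counts from the bijection $\Phi_{1}$ of Theorem \ref{th:bij2}, which identifies $t$-core partitions with $l$ parts and tuples $(n_{1},\ldots,n_{t-1})\in\mathbb{Z}_{\geq 0}^{t-1}$ satisfying $\sum_{k=1}^{t-1}n_{k}=l$. Under this identification, (1) and (2) are immediate from a stars-and-bars count: (1) is the number of weak compositions of $l$ into $t-1$ parts, namely $\binom{l+t-2}{t-2}=\binom{l+t-2}{l}$, and (2) follows by summing over $l'\le l$, or equivalently adding a slack variable $n_{0}$ to get weak compositions of $l$ into $t$ parts.

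For (3) and (4) the key step is to translate the combinatorial conditions on the parts of $\lambda$ into conditions on the tuple $(n_{1},\ldots,n_{t-1})$, using Remark (3), which states $\beta(\lambda)=\{(a-1)t+k:1\le a\le n_{k},\,1\le k\le t-1\}$. Writing $N=\#(\lambda)$ and arranging $\beta(\lambda)$ in increasing order $b_{1}<b_{2}<\cdots<b_{N}$, one has $\lambda_{N+1-i}=b_{i}-i+1$. In particular the smallest part equals $b_{1}=\min\beta(\lambda)$, and a part equals $1$ precisely when $b_{i}=i$, so exactly $j$ parts equal $1$ iff $\{1,2,\ldots,j\}\subset\beta(\lambda)$ and $j+1\notin\beta(\lambda)$.

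For (4) this condition becomes $n_{1}=\cdots=n_{i-1}=0$ and $n_{i}\ge 1$, so after decrementing $n_{i}$ I count weak compositions of $l-1$ into the $t-i$ variables $n_{i},\ldots,n_{t-1}$, yielding $\binom{l+t-i-2}{l-1}$. For (3), since the elements of $\beta(\lambda)$ strictly less than $t$ are exactly $\{k:n_{k}\ge 1,\,1\le k\le t-1\}$, the condition ``exactly $j$ parts equal $1$'' splits into three cases. If $0\le j\le t-2$ I need $n_{1},\ldots,n_{j}\ge 1$ and $n_{j+1}=0$, and the remaining $t-2$ free coordinates must sum (after the standard shift) to $l$, giving $\binom{l+t-3}{l}$. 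If $j=t-1$ I need $n_{1},\ldots,n_{t-1}\ge 1$ with $\sum n_{k}=l+t-1$ (the condition $t\notin\beta(\lambda)$ being automatic), which gives $\binom{l+t-2}{l}$. If $j\ge t$ then $\{1,\ldots,t\}\subset\beta(\lambda)$ forces $t\in\beta(\lambda)$, contradicting that $\lambda$ is a $t$-core, so the count is $0$.

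The only place that requires real care is the case analysis in (3): one must correctly interpret ``exactly $j$ parts equal to $1$'' as the existence of the initial run $1,2,\ldots,j$ in $\beta(\lambda)$ together with the absence of $j+1$, and then recognize that $t\notin\beta(\lambda)$ is automatic so that $j=t-1$ has no extra constraint. Once this translation is in place, every count reduces to a single stars-and-bars computation, so there is no further obstacle.
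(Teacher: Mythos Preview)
Your argument is correct. Parts (1), (2) and (4) follow exactly the paper's route: the same bijection $\Phi_{1}$, the same stars-and-bars count, and for (4) the same translation ``smallest part $=i$'' $\Leftrightarrow$ ``$n_{1}=\cdots=n_{i-1}=0,\ n_{i}\ge 1$'' via the description of $\beta(\lambda)$ in Remark~(3).

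Part (3) is where you and the paper diverge. The paper does not work directly with $\beta(\lambda)$; instead it uses the generator machinery from the proof of Theorem~\ref{th:bij2}. If $\lambda$ has $l$ parts $>1$ and $j$ parts equal to $1$, then $\lambda=\phi_{j}(gen(\lambda))$ with $gen(\lambda)$ a $t$-core into $l$ parts, and the criterion for $\phi_{j}(\mu)$ to be a $t$-core is that the coordinate $n_{t-1-j}$ of $\Phi_{1}(\mu)$ vanish (for $0\le j\le t-2$; no condition when $j=t-1$). So the paper counts tuples in $\mathcal{N}_{t}$ with $\sum n_{k}=l$ and $n_{t-1-j}=0$. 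You instead read the condition ``exactly $j$ ones'' directly off $\beta(\lambda)$ as $\{1,\ldots,j\}\subset\beta(\lambda)$, $j+1\notin\beta(\lambda)$, which translates to $n_{1},\ldots,n_{j}\ge 1$, $n_{j+1}=0$ on the tuple for $\lambda$ itself (with $\sum n_{k}=l+j$). Both pictures collapse to a $(t-2)$-variable weak composition of $l$, hence to $\binom{l+t-3}{l}$, and your $j=t-1$ and $j\ge t$ cases match as well. Your route is a bit more self-contained, since it avoids invoking the operators $\phi_{j}$, $\psi_{j}$ and the associated case analysis from the proof of Theorem~\ref{th:bij2}; the paper's route, on the other hand, reinforces the tree-of-cores viewpoint developed there. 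Either way the final counts agree.
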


\begin{proof}
Recall that the number of ways to put $n$ indistinguishable balls into $k$ distinguishable boxes is $\binom{n+k-1}{n}$, namely, $|\{(a_1,a_2,\cdots,a_{k})\in\mathbb{Z}_{\geq 0}:\sum_{i=1}^{k}a_{i}=n\}|=\binom{n+k-1}{n}$.

(1) Since $\Phi_{1}|_{l}$ is a bijection, $|\mathcal{P}_t\bigcap\{\#(\lambda)=l\}|=|\{(n_0,n_1,\cdots,n_{t-1})\in\mathbb{Z}^{t}_{\geq 0}:n_0=0,\sum_{i=1}^{t-1}n_{i}=l\}|=\binom{l+t-2}{l}$.

(2) By (1), we have the number of $t$-core partitions into at most $l$ parts equals $\sum_{j=0}^{l}\binom{j+t-2}{j}=\binom{l+t-1}{l}.$

(3) Let $\lambda$ be a $t$-core partition with $l$ parts greater than $1$ and $j$ parts equal to $1$. Then $gen(\lambda)$ is a $t$-core partition into $l$ parts such that $\Phi_{1}(gen(\lambda))$ satisfies $n_{t-1-j}=0$.

If $0\leq j\leq t-2$, then $|\{t$-core partition with $l$ parts greater than $1$ and $j$ parts equal to $1$ $\}|$ $=$ $|\{(n_0,n_1,\cdots,n_{t-1})\in\mathbb{Z}_{\geq 0}:n_0=n_{t-1-j}=0,\sum_{i=1}^{t-1}n_{i}=l\}|=\binom{l+t-3}{l}.$

If $j=t-1$, then $|\{t$-core partition with $l$ parts greater than $1$ and $t-1$ parts equal to $1$ $\}|$ $=$ $|\mathcal{P}_t\bigcap\{\#(\lambda)=l\}|=\binom{l+t-2}{l}.$

Obviously, $\#_{j}(\lambda)<t$ for any $t$-core partition $\lambda$, so the number is zero if $j>t-1$.

(4) Recall the previous remark: $a\in\beta(\lambda)$ and $a>t$ only if $a-t\in\beta(\lambda)$. Thus, we have $|\mathcal{P}_{t}\bigcap\{\#(\lambda)=l$, $\lambda_l=i\}|=|\mathcal{N}_{t}\bigcap\{\sum_{j=0}^{t-1}n_j=l,$ $n_{i}\geq 1,$ $n_{j}=0$ for $j<i\}|=\binom{l+t-i-2}{l-1}.$
\end{proof}

We say a partition is the conjugate of $\lambda$, denoted by $\lambda'$, if $\#(\lambda')=\lambda_1$ and $\lambda'_j=\sum_{i=j}^{\lambda_1}\#_{i}(\lambda)$ for $1\leq j \leq \#(\lambda')$. Hence, $[\lambda]$ and $[\lambda']$ transform into each other when reflected along the main diagonal. Moreover, $[\lambda]$ is a $t$-core if and only if $[\lambda']$ is a $t$-core. Hence we obtain the following corollary which was proved by Baek, Nam, and Yu \cite{Baek2019}.

\begin{corollary}[\cite{Baek2019}]
(1) The number of $t$-core partitions with largest part $x$ is $\binom{x+t-2}{x}.$

(2) The number of $t$-core partitions with largest part $x$ and second largest part $y$ is
\begin{equation*}
\begin{cases}
\binom{y+t-3}{y}&\text{ if }x-y<t-1,\\
\binom{y+t-2}{y}&\text{ if }x-y=t-1.
\end{cases}
\end{equation*}

(3) The number of $t$-core partitions such that the largest part equals $x$ and there are exactly $i$ parts equal to $x$ is $\binom{x+t-2-i}{t-1-i}$.
\end{corollary}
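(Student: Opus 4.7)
The strategy I would follow is to exploit conjugation $\lambda\mapsto\lambda'$, which restricts to an involution on $\mathcal{P}_{t}$ and exchanges the largest part with the number of parts: $\lambda'_{1}=\#(\lambda)$ and $\#(\lambda')=\lambda_{1}$. My plan is to translate each of (1)--(3) into an equivalent statement about the number of parts (or the smallest part) of $\mu:=\lambda'$, and then invoke the matching clause of Theorem \ref{th:numberofparts}.

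For (1), this is immediate: $\lambda_{1}=x$ iff $\#(\mu)=x$, so the count is $\binom{x+t-2}{x}$ by Theorem \ref{th:numberofparts}(1).

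For (2), using $\mu_{j}=\#\{i:\lambda_{i}\geq j\}$, the condition $\lambda_{2}=y$ reads as $\mu_{y}\geq 2$ together with $\mu_{y+1}\leq 1$. Combined with $\#(\mu)=\lambda_{1}=x$, this forces $\mu$ to end in exactly $x-y$ parts equal to $1$ when $y<x$, and to have no part equal to $1$ when $y=x$. Both sub-cases are simultaneously captured by counting $t$-cores with $y$ parts greater than $1$ and $j:=x-y$ parts equal to $1$, so Theorem \ref{th:numberofparts}(3) yields $\binom{y+t-3}{y}$ when $j\leq t-2$ and $\binom{y+t-2}{y}$ when $j=t-1$, matching the claim. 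The third item of the remark after the proof of Theorem \ref{th:bij2} also forces $x-y\leq t-1$ for any $t$-core, so the vanishing case $j>t-1$ of Theorem \ref{th:numberofparts}(3) never arises.

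For (3), the joint condition $\lambda_{1}=x$ and $\#_{x}(\lambda)=i$ translates under conjugation to $\#(\mu)=x$ together with $\mu_{x}=i$, since the smallest part of $\mu$ records the multiplicity of the largest value of $\lambda$. Theorem \ref{th:numberofparts}(4) then gives $\binom{x+t-i-2}{x-1}$, and the stated form $\binom{x+t-2-i}{t-1-i}$ follows from the symmetry $\binom{n}{k}=\binom{n}{n-k}$. I expect the main technical care to be in part (2), where the $y=x$ and $y<x$ sub-cases have to be matched consistently to the parameters $(l,j)$ of Theorem \ref{th:numberofparts}(3) so that a single, uniform formula emerges.
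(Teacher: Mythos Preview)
Your proposal is correct and follows exactly the route the paper takes: the corollary is deduced from Theorem~\ref{th:numberofparts} by conjugation, with parts (1), (2), (3) corresponding respectively to clauses (1), (3), (4) there. One small remark: the bound $x-y\le t-1$ is more directly justified by the observation in the proof of Theorem~\ref{th:numberofparts}(3) that $\#_{j}(\mu)<t$ for any $t$-core $\mu$, rather than by item (3) of the remark following Theorem~\ref{th:bij2}.
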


We have studied the core partitions in which either the number of parts or the largest part is limited. We now turn to the core partitions satisfying them both. Let $\mathcal{P}_{t;l;x}$ be the set of $t$-core partitions into $l$ parts, none exceeding $x$. Then we have a explicit but messy formula for $|\mathcal{P}_{t;l;x}|$.

\begin{theorem}\label{th:both}
$$|\mathcal{P}_{t;l;x}|=\binom{l+t-2}{l}+\sum_{r=1}^{t-1}(-1)^{r}\sum_{1\leq i_{1}<\cdots<i_{r}\leq t-1}\binom{l+t-2-\sum_{j=1}^{r}m_{i_j}}{t-2}$$
where
\begin{equation*}
m_{j}=\begin{cases}
2+\lfloor\frac{x}{t}\rfloor&\text{ if }1\leq j\leq r_{t}(x),\\
1+\lfloor\frac{x}{t}\rfloor&\text{ if }r_{t}(x)< j<t
\end{cases}
\end{equation*}
and $r_{t}(x)$ is the remainder of $x$ divided by $t$.
\end{theorem}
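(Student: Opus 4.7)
The plan is to reduce the problem to a bounded-composition count via the bijection $\Phi_{1}$ of Theorem~\ref{th:bij2} and then apply standard inclusion-exclusion on the upper bounds.

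By Theorem~\ref{th:bij2} the $t$-core partitions with $\#(\lambda)=l$ are in bijection with tuples $(n_{0},n_{1},\ldots,n_{t-1})\in\mathcal{N}_{t}$ satisfying $\sum_{j=1}^{t-1}n_{j}=l$, and by the third remark following that theorem $\beta(\lambda)=\{(i-1)t+j:1\le j\le t-1,\,1\le i\le n_{j}\}$. In particular the largest element of $\beta(\lambda)$ in residue class $j$ is $(n_{j}-1)t+j$ whenever $n_{j}>0$, so the size restriction bounding the parts of $\lambda$ by $x$ translates into $(n_{j}-1)t+j\le x$ for every $j$, equivalently $n_{j}\le m_{j}-1$. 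A short case analysis with $x=qt+r$, $q=\lfloor x/t\rfloor$, $r=r_{t}(x)$ pins down these $m_{j}$ explicitly: when $j\le r$ one has $qt+j\le x$, forcing $n_{j}\le q+1$, so $m_{j}=q+2$; when $j>r$ one has $qt+j>x$, forcing $n_{j}\le q$, so $m_{j}=q+1$, matching the statement.

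It then remains to count tuples $(n_{1},\ldots,n_{t-1})\in\mathbb{Z}_{\ge0}^{t-1}$ with $\sum n_{j}=l$ and $n_{j}\le m_{j}-1$ for every $j$. I would apply the classical inclusion-exclusion on the upper bounds: for each $S\subseteq\{1,\ldots,t-1\}$ the substitution $n_{j}\mapsto n_{j}-m_{j}$ on $j\in S$ converts the tuples that violate the bound on $S$ into an unrestricted stars-and-bars count in $t-1$ variables of size $\binom{l+t-2-\sum_{j\in S}m_{j}}{t-2}$, with the convention that binomials with negative top vanish. Isolating the $S=\emptyset$ contribution as $\binom{l+t-2}{t-2}=\binom{l+t-2}{l}$ and grouping the remaining subsets by cardinality $r=|S|$ then reproduces the displayed double sum.

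The most delicate step will be the case split producing the $m_{j}$: aligning the remainder $r_{t}(x)$ with the residue class $j$, and staying consistent at the boundary $j=r_{t}(x)$, is where an off-by-one in the floor is easiest to slip in. Once the bounds are confirmed, the rest is a direct application of Theorem~\ref{th:bij2} together with the standard inclusion-exclusion for bounded compositions.
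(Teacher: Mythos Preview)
Your overall strategy is exactly the paper's: translate via $\Phi_{1}$ to tuples $(n_{1},\ldots,n_{t-1})$ with $\sum_{j}n_{j}=l$, impose upper bounds $n_{j}<m_{j}$, and apply inclusion--exclusion on the events $A_{k}=\{n_{k}\ge m_{k}\}$. The paper's proof simply records $|\mathcal{P}_{t;l;x}|=|A\setminus\bigcup_{k}A_{k}|$ and invokes inclusion--exclusion, without spelling out why ``largest part $\le x$'' becomes $n_{j}<m_{j}$.

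Your attempt to supply that step contains a genuine gap. By definition $\beta(\lambda)=\{\lambda_{j}+\#(\lambda)-j:1\le j\le l\}$ consists of first-column hook lengths, so $\max\beta(\lambda)=\lambda_{1}+l-1$, not $\lambda_{1}$. Hence ``every part $\le x$'' translates to $(n_{j}-1)t+j\le x+l-1$ for each $j$ with $n_{j}>0$, not to $(n_{j}-1)t+j\le x$ as you wrote. A quick test exposes the discrepancy: for $t=3$, $l=2$, $x=1$ the $3$-core $(1,1)$ lies in $\mathcal{P}_{3;2;1}$ with $\Phi_{1}((1,1))=(0,1,1)$, yet your bounds give $m_{2}=1$ and would force $n_{2}\le 0$; the resulting inclusion--exclusion count is $0$ rather than $1$. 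This is precisely the off-by-one you flagged as delicate. The inclusion--exclusion mechanics are fine; the repair is to run your case split with $x+l-1$ in place of $x$, so that the $m_{j}$ are built from $\lfloor(x+l-1)/t\rfloor$ and $r_{t}(x+l-1)$, after which the remainder of your argument goes through unchanged.
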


\begin{proof}
Let $A=\mathcal{N}_{t}\bigcap\{\sum_{j=0}^{t-1}n_{j}=l\}$, $A_{k}=A\bigcap\{n_{k}\geq m_{k}\}$ for $1\leq k\leq t-1$. Then $|\mathcal{P}_{t;l;x}|=|A-\bigcup_{k=1}^{t-1}A_{k}|$. By the principle of inclusion-exclusion, we obtain the result.
\end{proof}

There are not too many papers concerning core partitions with parts divisible by a fixed positive integer $d$. In the following theorem, we claim some interesting facts about this kind of partitions.

\begin{theorem}\label{th:mod}
Let $_{d}\mathcal{P}_t$ be the set of $t$-core partitions with each part a multiple of $d$. Then for $0\leq k \leq d-1$, there is a bijection $\Psi:$ $\mathcal{P}_t\to$ $_{d}\mathcal{P}_{dt-k}$, $\lambda\mapsto\Psi(\lambda)$ such that $\#(\lambda)=\#(\Psi(\lambda))$.

\end{theorem}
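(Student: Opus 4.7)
The plan is to leverage the bijection $\Phi_1$ from Theorem~\ref{th:bij2} (applied with $dt-k$ in place of $t$) to reduce the statement to a combinatorial matching on parameter tuples. For $\mu \in \mathcal{P}_{dt-k}$ with $l = \#(\mu)$ parts, write $\Phi_1(\mu) = (0, N_1, \ldots, N_{dt-k-1})$, so that $\beta(\mu) = \{(i-1)(dt-k) + j : 1 \le i \le N_j,\ 1 \le j \le dt-k-1\}$ and $\sum_j N_j = l$. Since $\mu_r = \beta(\mu)_r - (l-r)$ when $\beta(\mu)$ is listed in decreasing order $b_1 > b_2 > \cdots > b_l$, the condition that every part of $\mu$ is divisible by $d$ is equivalent to $b_r \equiv l - r \pmod d$ for each $r = 1, \ldots, l$. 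This recasts $_d\mathcal{P}_{dt-k} \cap \{\#(\mu) = l\}$ as the set of tuples $(N_1, \ldots, N_{dt-k-1}) \in \mathbb{Z}_{\geq 0}^{dt-k-1}$ with $\sum_j N_j = l$ whose associated $\beta$-set, sorted decreasingly, exhibits the prescribed residue pattern modulo $d$, in addition to the flushness/column-$0$-empty conditions built into $\Phi_1$.

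I would then show this set has cardinality $\binom{l+t-2}{l}$, matching $|\mathcal{P}_t \cap \{\#(\lambda) = l\}|$ from Theorem~\ref{th:numberofparts}(1). The bead at column $j$, level $i$, of the $(dt-k)$-abacus has residue $j - (i-1)k \pmod d$, so the sort-interleave requirement pins down both the number $m_r = \lfloor l/d \rfloor + \mathbf{1}[r < l \bmod d]$ of beads in each residue class $r \in \{0, 1, \ldots, d-1\}$ and the levels at which they can occur. A combinatorial analysis by induction on $l$, mirroring the generator structure $\phi_j, gen$ from the proof of Theorem~\ref{th:bij2}, then yields both the count $\binom{l+t-2}{l}$ and an explicit bijection $\Psi$: each time the induction produces $\lambda = \phi_j(gen(\lambda))$ on the $\mathcal{P}_t$-side, an analogous operator on the $(dt-k)$-core side appends beads in the unique positions consistent with flushness and the residue constraint.

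The main obstacle is the residue-drift condition in the general-$k$ case: as one ascends a column of the $(dt-k)$-abacus, the bead's residue modulo $d$ changes by $-k$ per level, creating a delicate interaction between flushness and the required residue pattern. I expect this to be navigated by bookkeeping the residue classes carefully when applying the generator, ensuring the appended bead lands in the class prescribed by its rank in the sorted $\beta$-set, and by proving injectivity of the induction step (surjectivity then follows from the cardinality match). Since each step preserves the number-of-parts relationship $\#(\lambda) = \#(\Psi(\lambda))$ and the induction base case is trivial (both sides are empty when $l = 0$), the required bijection $\Psi$ follows.
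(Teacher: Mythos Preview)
Your setup is sound: the translation of ``all parts divisible by $d$'' into the $\beta$-set condition $b_r \equiv l-r \pmod d$ is correct, and the residue of a bead at column $j$, level $i$ is indeed $j-(i-1)k \pmod d$. However, the heart of the argument---showing that the tuples $(N_1,\ldots,N_{dt-k-1})$ satisfying this residue pattern are counted by $\binom{l+t-2}{l}$, and that the induction on $l$ via a $\phi_j$-like operator goes through---is not carried out, and the obstacle you yourself flag (residue drift across levels) is real. On the non-conjugated side, the operator $\phi_j$ does \emph{not} preserve the property ``all parts divisible by $d$'': if $\mu$ has all parts $\equiv 0 \pmod d$, then $\phi_j(\mu)$ has all non-$1$ parts $\equiv 1 \pmod d$. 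So there is no obvious ``analogous operator'' that simultaneously respects flushness, the $(dt-k)$-core property, and the divisibility constraint; your proposal asserts one exists but does not construct it. Without that, neither the count nor the bijection follows.

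The paper's proof avoids this difficulty entirely by first passing to conjugates. The conjugate of a partition with every part divisible by $d$ is a partition in which every multiplicity $\#_i$ is divisible by $d$, and \emph{this} condition interacts perfectly with the generator structure: $\phi_j$ preserves it if and only if $d\mid j$. The paper then introduces $P_d(\lambda)$, the number of $(dt-k)$-cores $\mu$ with $gen(\mu)=\lambda$ and $d\mid\#_i(\mu)$ for all $i$, and shows (via Lemma~\ref{newadded}) that $P_d$ on the $(dt-k)$-core side and $P=P_1$ on the $t$-core side induce isomorphic rooted trees. The bijection $\Psi'$ is then defined recursively by matching children with equal $P$-values. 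This tree-isomorphism argument is the missing idea in your plan; conjugation is what makes the generator machinery compatible with the divisibility restriction, and it is not clear your direct residue bookkeeping can be completed without, in effect, rediscovering it.
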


Before we prove this theorem, we give a useful lemma.

\begin{lemma}\label{newadded}
Let $P(\lambda)$ be the number of $t$-core partitions generated by $\lambda$. Then it equals the number of zero coordinates in $\textbf{n}=\Phi_{1}(\lambda)$. Furthermore, $\{P(\mu):$ $gen(\mu)=\lambda\}=\{1,2,\cdots,P(\lambda)\}$.
\end{lemma}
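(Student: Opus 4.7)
The plan is to leverage the identity $\Phi_1(\phi_j(\lambda)) = \psi_j(\Phi_1(\lambda))$ established in the proof of Theorem \ref{th:bij2} and reduce both claims to careful combinatorics on the coordinates of $\textbf{n} = \Phi_1(\lambda) \in \mathcal{N}_t$.

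First I would establish the counting formula for $P(\lambda)$. Since $\phi_j(\lambda)$ is a $t$-core iff $\psi_j(\textbf{n}) \in \mathcal{N}_t$ (equivalently, its first coordinate vanishes), I read off from the explicit expression $\psi_j(\textbf{n}) = (n_{t-1-j}, n_{t-j}+1, \ldots, n_{t-1}+1, n_0, n_1, \ldots, n_{t-j-2})$ that for $0 \leq j \leq t-2$ the condition is exactly $n_{t-1-j} = 0$, while for $j = t-1$ the leading entry is $n_0$, which always vanishes. Hence the valid $j$'s are in bijection with the zero entries of $\textbf{n}$: the value $j = t-1$ is matched with the forced zero $n_0$, and each $j \in \{0,\ldots,t-2\}$ is matched with a vanishing coordinate in $\{n_1,\ldots,n_{t-1}\}$. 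Therefore $P(\lambda) = |\{0 \leq i \leq t-1 : n_i = 0\}|$.

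Next I would compute $P(\phi_j(\lambda))$ for each valid $j$ by counting zeros of $\psi_j(\textbf{n})$. For $j = t-1$, the tuple $\psi_{t-1}(\textbf{n}) = (n_0, n_1+1, \ldots, n_{t-1}+1)$ has a single zero in position $0$, giving $P(\phi_{t-1}(\lambda)) = 1$. For valid $0 \leq j \leq t-2$, positions $1$ through $j$ are strictly positive (the shift-and-add-one in $\psi_j$ blocks them from vanishing), positions $0$ and $j+1$ are forced zeros (equal to $n_{t-1-j}$ and $n_0$ respectively), and positions $j+2$ through $t-1$ carry the values $n_1, n_2, \ldots, n_{t-j-2}$; thus $P(\phi_j(\lambda)) = 2 + |\{1 \leq i \leq t-j-2 : n_i = 0\}|$.

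Finally I would list the zero indices of $\textbf{n}$ inside $\{1,\ldots,t-1\}$ as $i_1 < i_2 < \cdots < i_s$ with $s = P(\lambda) - 1$. Taking $j = t-1-i_k$ yields $|\{1 \leq i \leq i_k - 1 : n_i = 0\}| = k-1$, hence $P(\phi_{t-1-i_k}(\lambda)) = k+1$ for $k = 1, \ldots, s$; combined with $P(\phi_{t-1}(\lambda)) = 1$, this shows that $\{P(\mu) : gen(\mu) = \lambda\} = \{1, 2, \ldots, s+1\} = \{1, \ldots, P(\lambda)\}$. The main obstacle is purely notational: keeping the cyclic shift inside $\psi_j$ aligned with the positions that can or cannot vanish, and treating $j = t-1$ separately since the wrap-around block of $\psi_j$ is empty in that case and the usual two-forced-zeros pattern collapses to one.
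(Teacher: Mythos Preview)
Your proposal is correct and follows essentially the same approach as the paper: both arguments use the identity $\Phi_1(\phi_j(\lambda))=\psi_j(\Phi_1(\lambda))$ to reduce the question to counting zeros in the tuple $\psi_j(\textbf{n})$, and both observe that the valid values of $j$ correspond to the zero coordinates of $\textbf{n}$. The paper compresses your explicit enumeration into the single sentence ``except for the first new finding that has the same $P$-value with $\lambda$, every new found $t$-core has a $P$-value $1$ less than the previous one,'' whereas you spell out the count $P(\phi_j(\lambda))=2+|\{1\le i\le t-j-2:n_i=0\}|$ and list the zero indices; the content is the same.
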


\begin{proof}
By the definition of $\phi_{j}$, $\phi_{j}(\lambda)$ is a $t$-core partition if and only if $n_{t-1-j}=0$. Thus, there exist exactly $P(\lambda)$ $t$-core partitions generated by $\lambda$. To find all these $t$-cores, we check the coordinates with index from $t-1$ to $0$. Once $n_{j}=0$, we obtain a $t$-core $\phi_{t-1-j}(\lambda)$. It is not hard to see that except for the first new finding that has the same $P$-value with $\lambda$, every new found $t$-core has a $P$-value $1$ less than the previous one. Therefore, $\{P(\mu):$ $gen(\mu)=\lambda\}=\{1,2,\cdots,P(\lambda)\}$.
\end{proof}

\begin{proof}[Proof of Theorem \ref{th:mod}]

Recall that $\lambda$ is a $t$-core partition if and only if its conjugate $\lambda'$ is a $t$-core partition. Let $E_{t,x}=\{\lambda\in\mathcal{P}_{t}:\lambda_1=x\}$ and $F_{t,x}=\{\lambda\in E_{t,x}:$ $d|\#_{j}(\lambda)$ for $j\geq0\}$. Then it is sufficient to prove that there is a bijection $\Psi':$ $E_{t,x}\to F_{dt-k,x}$, $\lambda\mapsto\Psi'(\lambda)$ such that $\lambda_1=\Psi(\lambda)_1$.

We begin with the case $x=0$. It is straightforward to define $\Psi'(\emptyset)=\emptyset$ where $\emptyset$ is the empty partition.

Notice that all core partitions can be generated by $\emptyset$ step by step. Let $\lambda$ be a $t$-core partition. When $\lambda\in$ $_{d}\mathcal{P}_t$, define $P_{d}(\lambda)=|\{\mu\in$ $_{d}\mathcal{P}_t:$ $gen(\mu)=\lambda\}|$. By Lemma \ref{newadded}, $P=P_{1}$. Since $\lambda\in F_{t,x}$ only if $\lambda=\phi_{jd}(gen(\lambda))$ for some $0\leq j\leq (t-1)/d$, we have $P_{d}(\lambda)=|\{0\leq j\leq (t-1)/d:$ $n_{t-1-jd}=0\}|$ and $\{P_{d}(\mu):$ $gen(\mu)=\lambda\}=\{1,2,\cdots,P_{d}(\lambda)\}$, so $P_{d}$ is distinct for each partition with the same generator. Taking $t=dt-k$ into the equation implies $P_{d}(\lambda)=|\{0\leq j\leq t-1:$ $n_{t-1-jd}=0\}|$. Hence, $\bigcup_{x\geq 0}E_{t,x}$ and $\bigcup_{x\geq 0}F_{dt-k,x}$ have the same tree structures.

When $x>0$, we define $\Psi'(\lambda)=\mu$ if $\Psi'(gen(\lambda))=gen(\mu)$ and $P(\lambda)=P_{d}(\mu)$. Then it is automatic that $\Psi'$ is a bijection.
\end{proof}

To see the same tree structure between the two types of partitions above, we set an example of $3$-core partitions and $5$-core partitions with even parts.

\usetikzlibrary{trees,arrows}
\tikzstyle{level 1}=[level distance=20mm]
\tikzstyle{level 2}=[level distance=25mm]
\tikzstyle{level 3}=[level distance=25mm]
\tikzstyle{level 4}=[level distance=25mm]
\tikzstyle{level 5}=[level distance=25mm]
\begin{tikzpicture}[grow=right,->]
  \node {$(0,0,0)$}
    child {node {$(0,1,1)$}
      child {node {$(0,2,2)$}
       child {node {$(0,3,3)$}
        child {node {$(0,4,4)$}
         child {node {$\cdots$}
         }
        }
       }
      }
    }
    child {node {$(0,1,0)$}
      child {node{$(0,2,1)$}
       child {node {$(0,3,2)$}
        child {node {$(0,4,3)$}
         child {node {$\cdots$}
         }
        }
       }
      }
      child {node{$(0,0,1)$}
       child {node {$(0,1,2)$}
        child {node {$(0,2,3)$}
         child {node {$\cdots$}
         }
        }
       }
       child {node {$(0,2,0)$}
        child {node {$(0,3,1)$}
         child {node {$\cdots$}
         }
        }
        child {node {$(0,0,2)$}
         child {node {$\cdots$}
         }
         child {node {$\cdots$}
         }
        }
       }
      }
    };
\end{tikzpicture}

\usetikzlibrary{trees,arrows}
\tikzstyle{level 1}=[level distance=20mm]
\tikzstyle{level 2}=[level distance=25mm]
\tikzstyle{level 3}=[level distance=25mm]
\tikzstyle{level 4}=[level distance=25mm]
\tikzstyle{level 5}=[level distance=25mm]
\begin{tikzpicture}[grow=right,->]
  \node {$(0,0,0,0,0)$}
    child {node {$(0,1,1,1,1)$}
      child {node {$(0,2,2,2,2)$}
       child {node {$(0,3,3,3,3)$}
        child {node {$(0,4,4,4,4)$}
         child {node {$\cdots$}
         }
        }
       }
      }
    }
    child {node {$(0,1,1,0,0)$}
      child {node{$(0,2,2,1,1)$}
       child {node {$(0,3,3,2,2)$}
        child {node {$(0,4,4,3,3)$}
         child {node {$\cdots$}
         }
        }
       }
      }
      child {node{$(0,0,1,1,0)$}
       child {node {$(0,1,2,2,1)$}
        child {node {$(0,2,3,3,2)$}
         child {node {$\cdots$}
         }
        }
       }
       child {node {$(0,0,0,1,1)$}
        child {node {$(0,1,1,2,2)$}
         child {node {$\cdots$}
         }
        }
        child {node {$(0,2,2,0,0)$}
         child {node {$\cdots$}
         }
         child {node {$\cdots$}
         }
        }
       }
      }
    };
\end{tikzpicture}

Combining Theorem \ref{th:mod} and Theorem \ref{th:numberofparts} leads to the following corollaries.

\begin{corollary}
The number of $2t$-core partitions into $l$ even parts equals the number of $t$-core partitions into $l$ parts.
\end{corollary}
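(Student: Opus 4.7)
The plan is to specialize Theorem \ref{th:mod} to the parameters $d = 2$ and $k = 0$. Since $0 \leq k \leq d-1$ is trivially satisfied, Theorem \ref{th:mod} produces a bijection $\Psi : \mathcal{P}_t \to {}_2\mathcal{P}_{2t}$ with the part-preserving property $\#(\lambda) = \#(\Psi(\lambda))$ for every $\lambda \in \mathcal{P}_t$. By definition of ${}_2\mathcal{P}_{2t}$, every part of $\Psi(\lambda)$ is even, so $\Psi$ sends $t$-cores to $2t$-cores with all parts even.

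Next, I would restrict $\Psi$ to the subset $\{\lambda \in \mathcal{P}_t : \#(\lambda) = l\}$. Since the number of parts is preserved, both the restricted map and its inverse stay inside the appropriate level sets, so the restriction is a bijection onto $\{\mu \in \mathcal{P}_{2t} : \#(\mu) = l,\ \mu_i \text{ even for all } i\}$. Equality of cardinalities is then immediate.

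As a consistency check, Theorem \ref{th:numberofparts}(1) evaluates the left-hand side as $\binom{l+t-2}{l}$, and the corollary predicts the same count on the right. Because the entire content is packaged into Theorem \ref{th:mod}, there is no substantive obstacle; the only thing to highlight is that the correct specialization is $d=2$, $k=0$, and that the part-preservation is exactly what allows the restriction to a fixed number of parts to be compatible with the bijection.
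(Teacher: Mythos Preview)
Your proposal is correct and is exactly the paper's intended argument: specialize Theorem~\ref{th:mod} with $d=2$, $k=0$ to obtain a part-count-preserving bijection $\mathcal{P}_t \to {}_2\mathcal{P}_{2t}$, then restrict to partitions with $l$ parts. The consistency check via Theorem~\ref{th:numberofparts}(1) is a nice addition but not needed for the proof itself.
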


\begin{corollary}
For $0\leq k\leq d-1$, the number of $(dt-k)$-core partitions into $l$ parts, each divisible by $d$ is $\binom{l+t-2}{l}$.
\end{corollary}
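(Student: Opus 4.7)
The plan is to obtain this corollary as an immediate consequence of the two results cited just before it, namely Theorem \ref{th:mod} and Theorem \ref{th:numberofparts}(1). First I would invoke Theorem \ref{th:mod} with the given parameters $d$, $t$, and $k$: it furnishes a bijection $\Psi:\mathcal{P}_t\to {}_d\mathcal{P}_{dt-k}$ satisfying $\#(\lambda)=\#(\Psi(\lambda))$ for every $\lambda\in\mathcal{P}_t$. Restricting $\Psi$ to the subset $\{\lambda\in\mathcal{P}_t:\#(\lambda)=l\}$ then yields a bijection onto $\{\mu\in{}_d\mathcal{P}_{dt-k}:\#(\mu)=l\}$, which is precisely the set whose cardinality the corollary requests.

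Second, I would apply Theorem \ref{th:numberofparts}(1), which states that the number of $t$-core partitions into $l$ parts equals $\binom{l+t-2}{l}$. Chaining this with the bijection above gives the claimed formula in a single line.

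There is essentially no technical obstacle at this stage; the genuine content has already been established in Theorem \ref{th:mod} (the existence of the part-preserving bijection $\Psi$ relying on the tree-structure argument and Lemma \ref{newadded}) and in Theorem \ref{th:numberofparts}(1) (the binomial count arising from $\Phi_1|_l$). The only things worth double-checking are bookkeeping details: that the range $0\le k\le d-1$ for which $\Psi$ is defined matches the range in the corollary (it does), and that ``each part divisible by $d$'' in the corollary is exactly the defining condition of ${}_d\mathcal{P}_{dt-k}$ introduced in Theorem \ref{th:mod} (again, it is). Thus no extra hypothesis or case analysis is needed, and the proof reduces to a one-sentence composition of the two prior theorems.
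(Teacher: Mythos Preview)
Your proposal is correct and is exactly the approach the paper takes: it simply states that combining Theorem~\ref{th:mod} and Theorem~\ref{th:numberofparts} yields the corollary, with no further argument given. Your write-up is, if anything, more detailed than the paper's own treatment.
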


The third bijection was given by Garvan \cite{Garvan1990} and Johnson \cite{Johnson2015}, independently.

\begin{theorem}\label{th:bij3}
There is a bijection $\Phi_2$: $\mathcal{P}_t\to \{(c_{0},c_2,\ldots,c_{t-1})\in\mathbb{Z}^{t}:\sum_{j=0}^{t-1}c_{j}=0\}$, $\Phi_2(\lambda)=\textbf{c}$ such that
$$\sigma(\lambda)=\sum_{j=0}^{t-1}(jc_{j}+\frac{t}{2}c_{j}^{2}).$$ Hence,
\begin{equation}\label{eq:gen3}
\sum_{\lambda\in\mathcal{P}_t}q^{\sigma(\lambda)}=\sum_{\substack{c_{0},\ldots,c_{t-1}\in\mathbb{Z}\\c_{0}+\ldots+c_{t-1}=0}}q^{\sum_{j=0}^{t-1}(jc_{j}+\frac{t}{2}c_{j}^{2})}.
\end{equation}
\end{theorem}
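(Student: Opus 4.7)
The plan is to realize $\Phi_2$ directly through the abacus (Maya-diagram) picture. For a partition $\lambda$, form the set $\mathcal{M}(\lambda):=\{\lambda_i-i:i\geq 1\}$ (with the convention $\lambda_i=0$ for $i>\#(\lambda)$); this set differs from $\mathbb{Z}_{<0}$ in exactly $2\#(\lambda)$ positions. For each residue $j\in\{0,1,\ldots,t-1\}$, let $\mathcal{M}_j:=\mathcal{M}(\lambda)\cap(j+t\mathbb{Z})$. The first step is to establish that $\lambda$ is a $t$-core if and only if each $\mathcal{M}_j$ has the form $\{j+kt:k<c_j\}$ for a unique integer $c_j$. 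This is the familiar abacus characterization: a $t$-hook of $\lambda$ corresponds to a bead-hole pair at positions $(p,p-t)$ on the same rod, so the absence of $t$-hooks is equivalent to every rod being downward-closed. One then defines $\Phi_2(\lambda):=(c_0,c_1,\ldots,c_{t-1})$.

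The constraint $\sum_j c_j=0$ follows at once from the global identity $|\mathcal{M}(\lambda)\cap\mathbb{Z}_{\geq 0}|=|\mathbb{Z}_{<0}\setminus\mathcal{M}(\lambda)|=\#(\lambda)$, decomposed by residue class. The inverse map is transparent: any tuple $(c_0,\ldots,c_{t-1})\in\mathbb{Z}^t$ with zero sum produces the set $\bigcup_j\{j+kt:k<c_j\}$, which differs from $\mathbb{Z}_{<0}$ in only finitely many places and therefore arises from a unique $t$-core. For the size identity I would invoke the displacement formula
\[
\sigma(\lambda)=\sum_{x\in\mathcal{M}(\lambda),\,x\geq 0}x\;-\sum_{x\notin\mathcal{M}(\lambda),\,x<0}x,
\]
which expresses $\sigma(\lambda)$ as the total upward displacement of beads from the ground state $\mathbb{Z}_{<0}$. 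Restricting rod by rod, the cases $c_j\geq 0$ (beads added at $\{j+kt:0\leq k<c_j\}$) and $c_j<0$ (holes created at $\{j+kt:c_j\leq k<0\}$) each contribute $jc_j+\tfrac{t}{2}(c_j^2-c_j)$ to $\sigma(\lambda)$. Summing over $j$ and using $\sum c_j=0$ to absorb the linear correction yields $\sigma(\lambda)=\sum_{j=0}^{t-1}(jc_j+\tfrac{t}{2}c_j^2)$, as claimed.

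The main technical obstacle is the rod-by-rod size calculation, where the two sign regimes must be treated separately and then shown to produce the same quadratic expression. The unification is driven by the elementary identity $\binom{c}{2}=\binom{1-c}{2}=\tfrac{c^2-c}{2}$, valid for every integer $c$, which matches the two case computations with the sign of the displacement flipping exactly when beads turn into holes. With the bijection and size formula in hand, the generating-function identity \eqref{eq:gen3} follows by summing $q^{\sigma(\lambda)}$ over all lattice points on the hyperplane $\{\sum c_j=0\}$.
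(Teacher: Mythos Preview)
Your argument is correct in substance, with one small slip: the common value $|\mathcal{M}(\lambda)\cap\mathbb{Z}_{\geq 0}|=|\mathbb{Z}_{<0}\setminus\mathcal{M}(\lambda)|$ is the side of the Durfee square of $\lambda$, not $\#(\lambda)$ (e.g.\ for $\lambda=(2,1)$ one gets $\mathcal{M}(\lambda)=\{1,-1,-3,-4,\ldots\}$, so each side has size $1$, not $2$). This does not affect the proof, since what you actually use is the equality of the two quantities, which yields $\sum_j c_j=0$, and the displacement formula for $\sigma(\lambda)$ remains valid. Your rod-by-rod computation and the unification via $\binom{c}{2}=\binom{1-c}{2}$ are fine.

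Your route differs from the paper's. The paper does not build $\Phi_2$ from the Maya diagram directly; instead it realizes $\Phi_2$ as the composition of the bijection $\Phi_1:\mathcal{P}_t\to\mathcal{N}_t$ of Theorem~\ref{th:bij2} with an explicit change of coordinates
\[
c_j=\begin{cases} c^*-n_{m-j},&0\leq j\leq m,\\ c^*-1-n_{m-j+t},&m<j<t,\end{cases}
\]
where $c^*=1+\lfloor(\sum_i n_i-1)/t\rfloor$ and $m=r_t(\sum_i n_i-1)$; the size identity then follows by substituting into the formula $\sigma(\lambda)=\sum_j\bigl(jn_j+t\binom{n_j}{2}\bigr)-\binom{\#(\lambda)}{2}$ already established for $\Phi_1$. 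The paper's approach ties $\Phi_2$ to $\Phi_1$ and makes the inverse dictionary between $\mathbf{n}$ and $\mathbf{c}$ explicit (useful elsewhere in the paper), while your Maya-diagram argument is self-contained and conceptually cleaner, being essentially the original Garvan--Kim--Stanton/Johnson construction.
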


We now set a transformation that maps $\mathcal{N}_{t}$ to $\{(c_{0},c_1,\ldots,c_{t-1})\in\mathbb{Z}^{t}:\sum_{j=0}^{t-1}c_{j}=0\}:$
\begin{equation*}
c_{j}=\begin{cases}c^*-n_{m-j} &\text{ if } 0\leq j\leq m,\\
c^*-1-n_{m-j+t} &\text{ if } m< j <t,
\end{cases}
\end{equation*}
where $c^*=1+\lfloor\frac{\sum_{i=0}^{t-1}n_{i}-1}{t}\rfloor$ and $m=r_{t}(\sum_{i=0}^{t-1}n_{i}-1)$, the remainder when $\sum_{i=0}^{t-1}{n_{i}}-1$ is divided by $t$. Then it is easy to verify the claim of Theorem \ref{th:bij3}. Besides, we have $c^*=max\{c_{j}\}$ and $m=max\{j:c_{j}=c^*\}$, so the corresponding partition $\lambda$ satisfies $\#(\lambda)=t(c^*-1)+m+1.$

The map $\Phi_{2}$ has been proved an effective tool to deal with the self-conjugate core partition problems \cite{Baek2019, Garvan1990, Johnson2015}. Below is one example.

\begin{lemma}\label{lem:1}(\cite{Baek2019})
Let $\textbf{c}=\Phi_{2}(\lambda)$. Then $\lambda$ is self-conjugate if and only if $c_{i}+c_{t-1-i}=0$ for each $0\leq i\leq t-1$.
\end{lemma}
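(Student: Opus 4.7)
The strategy is to reduce the lemma to the single intertwining identity
\[
\Phi_2(\lambda')_j = -c_{t-1-j} \qquad (0 \le j \le t-1),
\]
where $\Phi_2(\lambda) = (c_0, \ldots, c_{t-1})$. Once this is established, $\lambda = \lambda'$ is equivalent to $c_j = -c_{t-1-j}$ for every $j$, which is exactly the lemma. As a sanity check, the involution $\tau\colon (c_0,\ldots,c_{t-1}) \mapsto (-c_{t-1},\ldots,-c_0)$ preserves the hyperplane $\sum c_j = 0$ and, using that constraint, preserves the quadratic form $\sum_j(j c_j + \tfrac{t}{2}c_j^2)$, in agreement with $\sigma(\lambda) = \sigma(\lambda')$.

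To prove the intertwining, I would encode $\lambda$ by its Maya (bead) diagram $M(\lambda) = \{\lambda_i - i : i \geq 1\} \subset \mathbb{Z}$ (with $\lambda_i = 0$ for $i > \#(\lambda)$). The standard identity $M(\lambda') = \mathbb{Z}\setminus\{-1-a : a \in M(\lambda)\}$ says that conjugation is the reflection $a \mapsto -1-a$ composed with swapping beads and holes. Since this reflection interchanges the residue classes $j$ and $t-1-j$ modulo $t$, runner $j$ of $M(\lambda)$ is carried to runner $t-1-j$ of $M(\lambda')$. When $\lambda$ is a $t$-core, each runner of $M(\lambda)$ is ``down-closed'' on its residue class, and is therefore determined by the level of its topmost bead; this level is exactly what the charge $c_j$ records. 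Chasing the reflection through this identification shows that the topmost bead on runner $t-1-j$ of $M(\lambda')$ sits at the level corresponding to charge $-c_j$, which is the desired intertwining.

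The main technical hurdle will be matching the paper's normalization (the auxiliary quantities $c^*$ and $m$ appearing in the formula for $\Phi_2$) to the ``top-bead level'' on each runner, and verifying that this identification commutes with $a \mapsto -1-a$. I would pin it down in two steps. First, check the base case $\lambda = \emptyset$, where $\Phi_2(\emptyset) = (0,\ldots,0)$ is trivially fixed by $\tau$. Second, compute two small $t = 3$ examples directly from the formula: one finds $\Phi_2((2)) = (1,-1,0)$ and $\Phi_2((1,1)) = (0,1,-1)$, and since $(2)' = (1,1)$ one checks $\Phi_2((1,1)) = \tau(\Phi_2((2)))$ as predicted. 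Once the signs and shifts are locked in by this dictionary, the general intertwining is a routine computation on runners and the lemma follows immediately.
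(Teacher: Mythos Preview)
The paper does not prove this lemma itself; it is quoted from Baek--Nam--Yu and then used without proof in the next theorem. So there is no in-paper argument to compare against.

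Your strategy is the standard one and is correct. Reducing to the intertwining identity $\Phi_2(\lambda')_j=-\Phi_2(\lambda)_{t-1-j}$ is exactly the right move, and the Maya/abacus picture you describe (conjugation is the reflection $a\mapsto -1-a$ followed by the bead/hole swap; this sends runner $j$ to runner $t-1-j$; for a $t$-core each runner is a half-line determined by a single ``charge'') is precisely how this result is established in the sources the paper cites (Johnson, and implicitly Garvan--Kim--Stanton, work in the same abacus language). Your $t=3$ computations of $\Phi_2((2))=(1,-1,0)$ and $\Phi_2((1,1))=(0,1,-1)$ correctly verify the conventions against the paper's explicit formula via $c^*$ and $m$, and your sanity check that $\tau$ preserves both the hyperplane $\sum c_j=0$ and the size form $\sum_j(jc_j+\tfrac{t}{2}c_j^2)$ is valid.

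The one place I would not leave as ``routine'' is the general translation between the paper's $(c^*,m,n_j)$ formula and the top-bead level on each runner. The shift by $\#(\lambda)$ between $\beta(\lambda)$ and the Maya diagram, combined with the cyclic reindexing by $m=r_t(\#(\lambda)-1)$, means that $c_j$ is not literally the charge on runner $j$ of $M(\lambda)$ but on a cyclically shifted runner; two examples do not by themselves pin down this reindexing for all $\lambda$. Writing out that dictionary once for arbitrary $\lambda$ (e.g., showing that the paper's $c_j$ equals $K_{\pi(j)}+1$ for an explicit permutation $\pi$ depending only on $\#(\lambda)\bmod t$, and that conjugation interacts with $\pi$ as claimed) closes the argument cleanly. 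This is a matter of bookkeeping, not a gap in the idea.
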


Garvan, Kim, and Stanton gave the generating function for self-conjugate cores in \cite{Garvan1990} by utilizing the properties of $\Phi$ and $\Phi_{2}$. In the rest of this section, we rebuild the function by applying the famous Jacobi's triple product identity (one can find a quick proof in \cite{Andrews1998}), that is $$\sum_{n\in\mathbb{Z}}z^{n}q^{n^{2}}=(-zq;q^{2})_{\infty}(-z^{-1}q;q^{2})_{\infty}(q^2;q^2)_{\infty}$$ for $z\neq0$ and $|q|<1$. Let $\mathcal{S}_{t}$ be the set of self-conjugate $t$-core partitions. Then

\begin{theorem}(Garvan, Kim, and Stanton)
\begin{equation}\label{eq:gen-self}
\sum_{\lambda\in\mathcal{S}_{t}}q^{\sigma(\lambda)}=\begin{cases}
(q^{2t};q^{2t})_{\infty}^{t/2}(-q;q^{2})_{\infty}&\text{ if }t\text{ is even},\\
(q^{2t};q^{2t})_{\infty}^{(t-1)/2}(-q;q^{2})_{\infty}/(-q^t;q^{2t})_{\infty}&\text{ if }t\text{ is odd}.
\end{cases}
\end{equation}
\end{theorem}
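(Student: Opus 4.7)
My plan is to combine Lemma \ref{lem:1} with Theorem \ref{th:bij3} and evaluate the resulting theta-like sum by Jacobi's triple product identity.

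First I would use Lemma \ref{lem:1} to parametrize $\mathcal{S}_t$ via $\Phi_{2}$: the constraint $c_{i} + c_{t-1-i} = 0$ means the free parameters are $c_{0}, c_{1}, \ldots, c_{s-1} \in \mathbb{Z}$, where $s = \lfloor t/2 \rfloor$; in the odd case $t = 2s+1$, the self-paired middle coordinate is forced to satisfy $c_{s} = 0$. Substituting $c_{t-1-i} = -c_{i}$ into $\sigma(\lambda) = \sum_{j=0}^{t-1}(jc_{j} + \tfrac{t}{2}c_{j}^{2})$ and pairing the $i$-th and $(t-1-i)$-th summands collapses it to
\begin{equation*}
\sigma(\lambda) = \sum_{i=0}^{s-1}\bigl[(2i - t + 1)c_{i} + t\,c_{i}^{2}\bigr].
\end{equation*}
Because the summands depend on disjoint sets of variables, the generating function factors as a product of $s$ one-dimensional theta sums.

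Next I would apply Jacobi's triple product to each factor. Substituting $q \mapsto q^{t}$ and $z = q^{2i-t+1}$ in the identity recorded just before the theorem yields
\begin{equation*}
\sum_{n \in \mathbb{Z}} q^{(2i-t+1)n + tn^{2}} = (-q^{2i+1}; q^{2t})_{\infty}(-q^{2t-2i-1}; q^{2t})_{\infty}(q^{2t}; q^{2t})_{\infty}.
\end{equation*}
Multiplying over $i = 0, 1, \ldots, s-1$, the $(q^{2t}; q^{2t})_{\infty}$ factors contribute $(q^{2t}; q^{2t})_{\infty}^{s}$, which matches $(q^{2t}; q^{2t})_{\infty}^{t/2}$ when $t$ is even and $(q^{2t}; q^{2t})_{\infty}^{(t-1)/2}$ when $t$ is odd.

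The main remaining step, which is essentially combinatorial bookkeeping, is to collapse $\prod_{i=0}^{s-1}(-q^{2i+1}; q^{2t})_{\infty}(-q^{2t-2i-1}; q^{2t})_{\infty}$ to the advertised form. As $i$ runs from $0$ to $s-1$, the exponent $2i+1$ sweeps $\{1, 3, \ldots, 2s-1\}$ and $2t-2i-1$ sweeps $\{2s+1, 2s+3, \ldots, 2t-1\}$. For $t = 2s$ the union is exactly the set of odd residues modulo $2t$; for $t = 2s+1$ it is the same set with $t$ itself omitted. Since every positive odd integer has a unique representation $k + 2tj$ with $k$ an odd residue in $\{1, \ldots, 2t-1\}$ and $j \ge 0$, the product equals $(-q; q^{2})_{\infty}$ in the even case and $(-q; q^{2})_{\infty}/(-q^{t}; q^{2t})_{\infty}$ in the odd case, giving (\ref{eq:gen-self}).
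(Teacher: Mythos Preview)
Your proof is correct and follows essentially the same route as the paper: parametrize $\mathcal{S}_{t}$ via Lemma~\ref{lem:1}, factor the generating function into $\lfloor t/2\rfloor$ independent one-variable theta sums, apply Jacobi's triple product to each, and collapse the resulting Pochhammer product over odd residues modulo $2t$. Your write-up is in fact slightly more explicit than the paper's in handling the middle coordinate $c_{s}=0$ in the odd case and in justifying the final product identity.
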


\begin{proof}
According to (\ref{eq:gen3}) and Lemma \ref{lem:1}, we have
\begin{eqnarray*}
  \sum_{\lambda\in\mathcal{S}_{t}}q^{\sigma(\lambda)} &=& \sum_{\substack{c_{0},\ldots,c_{t-1}\in\mathbb{Z}\\c_{i}+c_{t-1-i}=0}}q^{\sum_{j=0}^{t-1}(jc_{j}+\frac{t}{2}c_{j}^{2})}\\
  &=& \prod_{j=0}^{\lfloor t/2\rfloor-1}\sum_{c_{j}+c_{t-1-j}=0}q^{jc_j+(t-1-j)c_{t-1-j}+\frac{t}{2}c_{j}^{2}+\frac{t}{2}c_{t-1-j}^{2}}\\
  &=& \prod_{j=0}^{\lfloor t/2\rfloor-1}\sum_{c_{j}\in\mathbb{Z}}q^{tc_{j}^{2}+(2j-t+1)c_{j}}\\
  &=& \prod_{j=0}^{\lfloor t/2\rfloor-1}(-q^{2j+1};q^{2t})_{\infty}(-q^{2t-2j-1};q^{2t})_{\infty}(q^{2t};q^{2t})_{\infty}.
\end{eqnarray*}

If $t$ is even, then
\begin{eqnarray*}
\sum_{\lambda\in\mathcal{S}_{t}}q^{\sigma(\lambda)}&=& (q^{2t};q^{2t})_{\infty}^{t/2}\prod_{j=1}^{t}(-q^{2j-1};q^{2t})_{\infty}\\
&=& (q^{2t};q^{2t})_{\infty}^{t/2}(-q;q^{2})_{\infty}.
\end{eqnarray*}

If $t$ is odd, then
\begin{eqnarray*}
\sum_{\lambda\in\mathcal{S}_{t}}q^{\sigma(\lambda)}&=& (q^{2t};q^{2t})_{\infty}^{(t-1)/2}\prod_{j=1}^{(t-3)/2}(-q^{2j-1};q^{2t})_{\infty}\prod_{j=(t+1)/2}^{t}(-q^{2j-1};q^{2t})_{\infty}\\
&=& (q^{2t};q^{2t})_{\infty}^{(t-1)/2}(-q;q^{2})_{\infty}/(-q^t;q^{2t})_{\infty}.
\end{eqnarray*}

\end{proof}

\section{simultaneous core partitions}

In this section, we fix $a$, $b$ two positive integers relatively prime, and let $\mathcal{P}_{t_1,t_2,\ldots,t_k}=\bigcap_{i=1}^{k}\mathcal{P}_{t_i}$. It is well known that $\mathcal{P}_{a,b}$ and $(a,b)$-Dyck path have the same cardinality (see \cite{Anderson2002}; for more generalized versions, see \cite{Amdeberhan2015, Yang2015}).

\begin{theorem}\label{th:two}(Anderson)
There is a one-to-one correspondence between partitions which are both $a$- and $b$- cores and shortest paths from $(0,b)$ to $(a,0)$ in $S:=\{(i,j):$ $0\leq i\leq a$ and $0\leq j\leq \lfloor(ab-ib)/a\rfloor\}$. Furthermore, $|\mathcal{P}_{a,b}|=Cat_{a,b}=\frac{1}{a+b}\binom{a+b}{a}$, the generalized Catalan number.
\end{theorem}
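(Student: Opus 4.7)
The plan is to characterize $(a,b)$-core partitions through their first-column hook sets $\beta(\lambda)$, identify these with order ideals in a natural poset on the gaps of the numerical semigroup $\langle a,b\rangle$, and then translate order ideals into monotone lattice paths in $S$. By part (3) of the remark following Theorem \ref{th:bij2}, $\lambda$ is an $s$-core iff $\beta(\lambda)\subset \mathbb{Z}_{>0}$ contains no multiple of $s$ and $h\in\beta(\lambda)$ with $h>s$ forces $h-s\in\beta(\lambda)$. Hence $\lambda\in\mathcal{P}_{a,b}$ iff $\beta(\lambda)$ is a finite subset of $\mathbb{Z}_{>0}$, closed under subtracting $a$ and $b$ as long as the result stays positive, and avoiding multiples of both. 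A short induction---repeatedly subtract $a$'s and $b$'s from any element of $\beta(\lambda)\cap\langle a,b\rangle$ until one eventually produces $a$ or $b$ itself in $\beta(\lambda)$, contradicting the no-multiples condition---strengthens this to $\beta(\lambda)\subseteq H$, where $H:=\mathbb{Z}_{>0}\setminus\langle a,b\rangle$. Since $\gcd(a,b)=1$, the Sylvester--Frobenius theorem gives $|H|=(a-1)(b-1)/2$, so in particular $H$ is finite and $|\mathcal{P}_{a,b}|<\infty$.

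Next, I would equip $H$ with the partial order $h\preceq h'$ iff $h'-h\in\langle a,b\rangle$; then the closure conditions above are precisely downward-closure with respect to $\preceq$, so $\beta$ restricts to a bijection between $\mathcal{P}_{a,b}$ and the order ideals of $(H,\preceq)$. Each gap has a unique representation $h=ub-va$ with $1\le u\le a-1$ and $v\ge 1$; in these coordinates the covers of $\preceq$ become $(u,v)\lessdot (u+1,v)$ (from adding $b$) and $(u,v)\lessdot(u,v-1)$ (from adding $a$), so via $h\mapsto(u,v)$ the poset $H$ embeds as the set of integer points in the region strictly beneath the line through the origin of slope $b/a$. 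An order ideal $I$ is then closed under decreasing $u$ and increasing $v$ within $H$, and its boundary in $\mathbb{Z}^2$ is a monotone staircase. A straightforward coordinate change (essentially $(u,v)\mapsto (u,b-v)$) carries this staircase to a shortest lattice path from $(0,b)$ to $(a,0)$ lying weakly below the segment joining those points, i.e., to a path in $S$.

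For the enumeration $|\mathcal{P}_{a,b}|=\frac{1}{a+b}\binom{a+b}{a}$, I would invoke the cycle lemma of Dvoretzky--Motzkin applied to the $\binom{a+b}{a}$ unrestricted monotone words of $a$ right-steps and $b$ down-steps: because $\gcd(a,b)=1$ no such path meets the diagonal except at its endpoints, so exactly one of the $a+b$ cyclic rotations of each word yields a path lying in $S$. I expect the main obstacle to be the bookkeeping in the middle step---pinning down the coordinate change so that order ideals of the gap poset map cleanly to paths in the region $S$ as actually defined (rather than to paths in a rotated or reflected variant)---while the reduction to $\beta(\lambda)\subseteq H$ is essentially Sylvester--Frobenius and the final enumeration is a textbook application of the cycle lemma.
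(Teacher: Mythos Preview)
Your proposal is correct and follows Anderson's original argument: encode an $(a,b)$-core by its $\beta$-set, recognize that the admissible $\beta$-sets are exactly the order ideals of the gap poset of the numerical semigroup $\langle a,b\rangle$, and read off a lattice path from the staircase boundary of the ideal; the count then follows from the cycle lemma. This is the classical route and your outline is sound, including your honest flag that the coordinate bookkeeping mapping ideals to paths in $S$ (rather than a reflected copy) is where the care is needed.

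The paper, however, deliberately takes a different route. It writes ``we now reprove the result by using $\Phi_1$'': rather than working with $\beta(\lambda)$ as a set, it uses the vector $\mathbf{n}=\Phi_1(\lambda)\in\mathcal{N}_a$, characterizes the extra $b$-core condition as the system of inequalities $n_{j+b}-n_j\le\lfloor (r_a(j)+b)/a\rfloor$ (Lemma~\ref{lem:3.1}), and then passes through an explicit chain of bijections $\mathcal{N}_a\cap\{\cdots\}\to\mathcal{X}_{a,b}\to\mathcal{Z}_{a,b}\to\mathcal{Y}_{a,b}$ (Lemma~\ref{lem:3.2}), the last of which is visibly the set of sub-diagonal staircases in $S$. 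Your approach is more conceptual and self-contained for this single statement; the paper's approach pays off downstream, since the intermediate sets $\mathcal{Z}_{a,b}$ and $\mathcal{Y}_{a,b}$ are exactly the coordinates used to prove the cyclic-shift Corollary~\ref{cor}, the explicit size and length formulas~(\ref{eq:nopy})--(\ref{eq:sizez}), and the generalization to $\mathcal{P}_{a,b,c_1,\ldots,c_n}$ in Theorem~\ref{th:mul}. For the enumeration both approaches ultimately appeal to Dvoretzky--Motzkin.
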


Inspired by the application of bijection $\Phi_2$ in \cite{Baek2019} and \cite{Johnson2015}, we now reprove the result by using $\Phi_1$. Before the proof, we claim two facts about $\mathcal{P}_{a,b}$.

\begin{lemma}\label{lem:3.1}
Rewrite $\mathcal{N}_t$ as $\{(n_{j})_{j\geq0}:$ $n_0=0$, $n_{j}\in\mathbb{Z}_{\geq0}$, $n_{j+t}=n_{j}$ for $j\geq0\}.$ For any $\lambda\in\mathcal{P}_{a}$ with image $\textbf{n}$ in $\mathcal{N}_a$, it is also a $b$-core partition if and only if $n_{j+b}-n_{j}\leq\lfloor\frac{b+r_{a}(j)}{a}\rfloor$ for $j\geq0$.
\end{lemma}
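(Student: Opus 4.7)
The plan is to translate the defining property of a $b$-core (no hook length of $\lambda$ is divisible by $b$) into a condition on the first-column hook set $\beta(\lambda)$, and then combine this with the explicit description of $\beta(\lambda)$ recorded in Remark~(3) after Theorem~\ref{th:bij2} to recover the stated inequality on $\mathbf{n}$. The starting reformulation I would use is: $\lambda$ is a $b$-core if and only if $b\notin\beta(\lambda)$ and every $h\in\beta(\lambda)$ with $h>b$ satisfies $h-b\in\beta(\lambda)$; the a priori stronger condition $kb\notin\beta(\lambda)$ for all $k\geq 1$ then follows from these two by a short induction.

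Write $b=qa+r$ with $q=\lfloor b/a\rfloor$ and $r=r_a(b)\in\{1,\ldots,a-1\}$; since $\gcd(a,b)=1$ we may assume $a>1$ so that $r\neq 0$. Using the description $\beta(\lambda)=\{j+ia:1\leq j\leq a-1,\,0\leq i\leq n_j-1\}$, I would take an arbitrary $h=j'+ia\in\beta(\lambda)$ with $h\geq b$ and split into cases according to how $j'$ compares with $r$. If $j'>r$, then $h-b=(j'-r)+(i-q)a$ has residue $j'-r\in\{1,\ldots,a-1-r\}$, and requiring $h-b\in\beta(\lambda)$ for every admissible $i\in[q,n_{j'}-1]$ reduces to $n_{j'}\leq n_{j'-r}+q$. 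If $j'<r$, a parallel computation (with $i$ now in $[q+1,n_{j'}-1]$) gives $n_{j'}\leq n_{j'-r+a}+q+1$. In the pivotal middle case $j'=r$, one has $h-b=(i-q)a$, a nonnegative multiple of $a$; this must not lie in $\beta(\lambda)$ (since $n_0=0$), and moreover the value $h=b$ itself must be forbidden, and both requirements fuse into the single sharp bound $n_r\leq q$.

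To match the form of the lemma I would then reparametrize by the ``target'' residue, setting $j=j'-r$ in the first two cases and $j=j'-r+a$ in the third, so that $n_{j'}$ is identified with $n_{(j+b)\bmod a}=n_{j+b}$ under the periodic extension. A direct check shows $\lfloor(b+r_a(j))/a\rfloor$ equals $q$ precisely when $r_a(j)+r<a$ (which covers the first two cases) and equals $q+1$ otherwise (which covers the third case). The three families of inequalities thus merge into the single uniform statement $n_{j+b}-n_j\leq\lfloor (b+r_a(j))/a\rfloor$ for $j\in\{0,\ldots,a-1\}$; the one remaining index $j=a-r$, which corresponds to no actual constraint from the $b$-core analysis, yields the trivially true inequality $0\leq n_{a-r}+q+1$. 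Because both sides depend only on $r_a(j)$, periodicity extends the statement to all $j\geq 0$, and since every step of the case analysis is an equivalence, the converse direction comes for free.

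I expect the main subtlety to lie in the boundary case $j'=r$. A naive approach --- extending $\beta(\lambda)$ downward by $\mathbb{Z}_{\leq 0}$ and asking only that the enlarged set be closed under subtraction by $b$ --- misses the $b\notin\beta$ requirement at $h=b$ and would give only the weaker estimate $n_r\leq q+1$. Correctly fusing the ``no multiple of $b$'' requirement with the ``no positive multiple of $a$'' requirement at this one residue class is exactly what sharpens the $j=0$ case to $n_r\leq\lfloor b/a\rfloor$, and it is this small but essential refinement that has to be tracked carefully to land on the stated bound.
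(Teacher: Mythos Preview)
Your approach is correct and follows the same $\beta$-set route as the paper: both use that $\beta(\lambda)$ for an $a$-core consists of initial segments $\{j,j+a,\ldots,j+(n_j-1)a\}$ in each nonzero residue class, and translate closure under subtraction by $b$ into one inequality per residue. The paper's execution is more compact---it writes the largest element in class $c$ uniformly as $(n_{r_a(c)}-1)a+r_a(c)$ (a formula that quietly returns $-a$ when $c\equiv 0$, since $n_0=0$) and then simplifies a single chain of inequalities algebraically, never splitting into cases. Your explicit handling of the $j'=r$ boundary and the separate requirement $b\notin\beta(\lambda)$ is more transparent on exactly the point the paper's formula absorbs silently, so the extra care there is well placed. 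One expository nit: your phrase ``the first two cases'' must mean $j'>r$ and the middle case $j'=r$ (so that $j=j'-r\ge 0$ and $\lfloor(b+r_a(j))/a\rfloor=q$), with $j'<r$ as the ``third'' giving $j=j'-r+a$ and floor $q+1$; since you discuss them in the order $j'>r$, $j'<r$, $j'=r$, the numbering is easy to misread.
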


\begin{proof}
By the remark in Section 2, $\lambda$ is also contained in $\mathcal{P}_{b}$ if and only if for any $x\in\beta(\lambda)$, $x-b$ is contained in $\beta(\lambda)$. For any $j\geq0$,
$$(n_{j+b}-1)a+r_{a}(j+b)=max\{x\in\beta(\lambda): x\equiv j+b\pmod{a}\}.$$ Hence, $\lambda$ is also contained in $\mathcal{P}_{b}$ if and only if
\begin{eqnarray*}
  (n_{j+b}-1)a+r_{a}(j+b)&\leq& b+max\{x\in\beta(\lambda): x\equiv j\pmod{a}\},\\
  (n_{j+b}-1)a+r_{a}(j+b)&\leq&b+(n_{j}-1)a+r_{a}(j),\\
  a(n_{j+b}-n_{j})&\leq&b+r_{a}(j)-r_{a}(j+b),\\
  n_{j+b}-n_{j}&\leq&\lfloor\frac{j+b}{a}\rfloor-\lfloor\frac{j}{a}\rfloor,\\
  n_{j+b}-n_{r_{a}(j)}&\leq&\lfloor\frac{r_{a}(j)+b}{a}\rfloor,\\
  n_{j+b}-n_{j}&\leq&\lfloor\frac{r_{a}(j)+b}{a}\rfloor.
\end{eqnarray*}
\end{proof}

The lemma above is a twin of Lemma 3.1 in \cite{Johnson2015}, so one can also prove it by the bijection from $\mathcal{N}_{t}$ to $\{(c_{0},c_1,\ldots,c_t-1)\in\mathbb{Z}^{t}:\sum_{j=0}^{t-1}c_{j}=0\}$, mentioned in Section 2.

\begin{lemma}\label{lem:3.2}
The following sets are equivalent.\\
(1) $\mathcal{P}_{a,b}$,\\
(2) $\mathcal{N}_{a}\bigcap\{n_{j+b}-n_{j}\leq\lfloor\frac{r_{a}(j)+b}{a}\rfloor$ for $j\geq0\}$,\\
(3) $\mathcal{X}_{a,b}:=\{(x_{j})_{j\geq0}:$ $x_0=0$, $x_{j}\in\mathbb{Q}_{\geq0}$, $x_{j}\equiv j/a\pmod{1}$, $x_{j+a}=x_{j}$ and $x_{j+b}-x_{j}\leq b/a$ for $j\geq0\}$,\\
(4) $\mathcal{Z}_{a,b}:=\{(z_{j})_{j\geq0}:$ $z_{j}\in\mathbb{Z}_{\geq0}$, $\sum_{l=j}^{j+a-1}z_{l}=b$, and $\sum_{l=0}^{j-1}z_{l}\leq jb/a$ for $j\geq0\}$,\\
(5) $\mathcal{Y}_{a,b}:=\{(y_1,y_2,\cdots,y_{a-1})\in\mathbb{Z}^{a-1}:$ $y_1\leq y_2\leq\cdots\leq y_{a-1}$ and $0\leq y_{j}\leq jb/a$ for $1\leq j\leq a-1\}$.
\end{lemma}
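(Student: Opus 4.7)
My plan is to establish a chain of explicit bijections $(1)\Leftrightarrow(2)\Leftrightarrow(3)\Leftrightarrow(4)\Leftrightarrow(5)$. The equivalence $(1)\Leftrightarrow(2)$ is exactly Lemma \ref{lem:3.1}. For $(2)\Leftrightarrow(3)$ I would use the rescaling $x_j := n_j + r_a(j)/a$, extended by $a$-periodicity. The conditions $x_0 = 0$, $x_{j+a} = x_j$, $x_j \in \mathbb{Q}_{\ge 0}$, and $x_j \equiv j/a \pmod 1$ are immediate, while multiplying $x_{j+b} - x_j \le b/a$ by $a$ and applying the identity $b + r_a(j) - r_a(j+b) = a\lfloor(r_a(j) + b)/a\rfloor$ converts it into the inequality $n_{j+b} - n_j \le \lfloor(r_a(j) + b)/a\rfloor$ from (2); the inverse $n_j = x_j - r_a(j)/a$ is obviously well-defined.

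For $(4)\Leftrightarrow(5)$ I would simply set $y_j := \sum_{l=0}^{j-1} z_l$ for $1 \le j \le a-1$. Non-negativity of the $z_l$ gives $y_1 \le y_2 \le \cdots \le y_{a-1}$, and the partial-sum condition $S_j \le jb/a$ from $\mathcal{Z}_{a,b}$ specialises to $y_j \le jb/a$. In the other direction, given $y \in \mathcal{Y}_{a,b}$ I would put $z_0 = y_1$, $z_l = y_{l+1} - y_l$ for $1 \le l \le a - 2$, $z_{a-1} = b - y_{a-1}$, and extend to all $j \ge 0$ by $a$-periodicity. The bound $y_{a-1} \le (a-1)b/a < b$ ensures $z_{a-1} \ge 0$, telescoping forces $\sum_{l=0}^{a-1} z_l = b$, and the identity $S_{j+a} - (j+a)b/a = S_j - jb/a$ reduces the constraint $S_j \le jb/a$ for all $j \ge 0$ to the range $1 \le j \le a - 1$.

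The technical heart, and main obstacle, is $(3)\Leftrightarrow(4)$, where the hypothesis $\gcd(a,b)=1$ enters essentially. My plan there is to exploit the fact that $k \mapsto kb \bmod a$ is a permutation $\sigma$ of $\{0,1,\ldots,a-1\}$, so the subsequence $w_k := x_{kb}$ for $k = 0, 1, \ldots, a-1$ visits each residue class mod $a$ exactly once. Setting $v_k := kb/a - w_k$, the congruence $x_j \equiv j/a \pmod 1$ forces $v_k \in \mathbb{Z}$, non-negativity of $x$ gives $v_k \le kb/a$, the constraint $x_{(k+1)b} - x_{kb} \le b/a$ gives $v_{k+1} - v_k \ge 0$, and $x_0 = 0$ together with the $a$-periodicity of $x$ gives $v_a = b$ and $v_{k+a} = v_k + b$. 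Hence $z_k := v_{k+1} - v_k$ produces an element of $\mathcal{Z}_{a,b}$ with $S_j = v_j$. Conversely, given $z \in \mathcal{Z}_{a,b}$, I would define $x_{\sigma(k)} := kb/a - S_k$, using the bijectivity of $\sigma$ to specify $x$ on every residue modulo $a$, and then run the preceding checks in reverse. The delicate bookkeeping is in verifying that this reindexing yields a function consistent with the $a$-periodicity of $\mathcal{X}_{a,b}$ and that every $(a,b)$-constraint translates cleanly under the permutation $\sigma$.
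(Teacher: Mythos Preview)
Your proposal is correct and follows essentially the same route as the paper: the maps $x_j = n_j + r_a(j)/a$, $z_j = x_{jb} - x_{(j+1)b} + b/a$, and $y_j = \sum_{l=0}^{j-1} z_l$ are exactly the ones the paper writes down, and your inverse for $(3)\Leftrightarrow(4)$ via the permutation $k\mapsto kb\bmod a$ composes to the paper's direct map $(5)\to(1)$, $n_{jb}=\lfloor jb/a\rfloor - y_j$. The only difference is cosmetic---the paper closes a cycle $(1)\to(2)\to(3)\to(4)\to(5)\to(1)$ and asserts the checks are easy, whereas you spell out the bidirectional verifications.
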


\begin{proof}
Lemma \ref{lem:3.1} provides the bijection between $(1)$ and $(2)$.

To figure out the relation among the sets, we write down some maps:

$(2)\to(3):$ $x_{j}=n_{j}+r_{a}(j)/a$ for $j\geq 0$;

$(3)\to(4):$ $z_{j}=x_{jb}-x_{(j+1)b}+b/a$ for $j\geq 0$;

$(4)\to(5):$ $y_{j}=\sum_{l=0}^{j-1}z_{l}$ for $1\leq j\leq a-1$;

$(5)\to(1):$ $n_0=0$, $n_{jb}=\lfloor jb/a\rfloor-y_{j}$ for $1\leq j\leq a-1$ .

It is easy to check that all the maps are bijections, which completes the proof.
\end{proof}

Note that $\mathcal{Y}_{a,b}$ is an expression for $S$ such that $(i,j)=(i+1,y_{a-i})$ for $1\leq i\leq a-1$. Therefore, the first half of Theorem \ref{th:two} is a corollary to Lemma \ref{lem:3.2}. To count the number of elements of $S$, one may refer to \cite{Dvoretzky1947} or \cite{Bizley1954}.

Krewaras \cite{Kreweras1965} gave an order in partitions where zero parts are allowed. For two partitions allowing parts of size zero, $\lambda$ and $\mu$, denote $\mu\leq\lambda$ if $\mu_{j}\leq\lambda_{j}$ for any $j\geq1$. Due to the theorem in \cite{Kreweras1965}, the number of partitions not greater than $\lambda$ is $det(\binom{\lambda_{j}+1}{j-i+1})$. Taking $\lambda=(\lfloor (a-1)b/a\rfloor,\lfloor (a-2)b/a\rfloor,\cdots,\lfloor b/a\rfloor)$, the number is exactly the cardinality of $\mathcal{Y}_{a,b}$. Thus, we have $$det\Bigg(\binom{\lfloor jb/a\rfloor+1}{j-i+1}_{(a-1)\times(a-1)}\Bigg)=\frac{1}{a+b}\binom{a+b}{a}.$$ Amdeberhan \cite{Amdeberhan2015} used this formula to obtain some identities for Catalan numbers.

Besides, Lemma \ref{lem:3.2} leads to many interesting corollaries.

\begin{corollary}\label{cor}
Let $\varphi$ be the operator on $\{(z_j)_{j\geq0}:z_{j}=z_{r_{a}(j)}\}$, induced from $\varphi_{0}$ such that the $j$th coordinate of $\varphi(\textbf{z})$ is $z_{r_{a}(j-1)}$. Let $\textbf{z}\in\mathcal{Z'}_{a,b}:=\{(z_j)_{j\geq0}:$ $\sum_{l=0}^{a-1}z_{l}=b$, $z_j\in\mathbb{Z}_{\geq0}$, and $z_{j}=z_{r_{a}(j)}$ for $j\geq0\}$. Then there exists a unique element $\tau$ in the group $\{\varphi^j,0\leq j\leq a-1\}$ such that $\tau(\textbf{z})\in\mathcal{Z}_{a,b}$.
\end{corollary}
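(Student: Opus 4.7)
The plan is to recognize the statement as an instance of the cycle lemma applied to the partial sums of $\mathbf{z}$. Introduce $T_j := \sum_{l=0}^{j-1} z_l$ for $0 \leq j \leq a$ (so $T_0 = 0$ and $T_a = b$) together with the ``level'' $M_j := a T_j - j b$, noting $M_0 = M_a = 0$. In this language, membership $\mathbf{z} \in \mathcal{Z}_{a,b}$ is equivalent to $M_j \leq 0$ for $0 \leq j \leq a$; periodicity of $\mathbf{z}$ plus the identity $M_{j+a} = M_j$ extend this from finitely many checks to all $j \geq 0$.

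The heart of the argument is to compute, for each $k \in \{0, 1, \ldots, a-1\}$, the partial sums of $\mathbf{z}^{(k)} := \varphi^k(\mathbf{z})$ in terms of the $T_j$. Using $(\varphi^k \mathbf{z})_l = z_{r_a(l-k)}$ and splitting the sum at the wrap-around position $l = k$, the inequality $\sum_{l=0}^{j-1}(\mathbf{z}^{(k)})_l \leq j b / a$ collapses, after cancellation, to the single inequality $M_i \leq M_{a-k}$ for an $i$ depending on $j$. Letting $j$ range over $1, \ldots, a-1$ sweeps $i$ through $\{1, \ldots, a-1-k\} \cup \{a-k+1, \ldots, a\}$, and combined with the trivial equality $M_{a-k} = M_{a-k}$ and the identification $M_0 = M_a$, the full condition $\varphi^k(\mathbf{z}) \in \mathcal{Z}_{a,b}$ becomes equivalent to $M_{a-k} = \max\{M_0, M_1, \ldots, M_a\}$.

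The final step uses $\gcd(a,b) = 1$: since $M_j \equiv -jb \pmod{a}$, the values $M_0, M_1, \ldots, M_{a-1}$ are pairwise distinct. Let $i^{\ast}$ be the unique index in $\{0, \ldots, a-1\}$ where $M_{i^{\ast}}$ is maximal. If $i^{\ast} > 0$, then $M_{i^{\ast}} > 0 = M_a$, so $i^{\ast}$ is the unique maximizer over $\{0, \ldots, a\}$ and we are forced to take $k = a - i^{\ast}$. If instead $i^{\ast} = 0$, all other $M_j$ are negative, and within the admissible window $\{1, \ldots, a\}$ (through which $a-k$ ranges as $k$ runs over $\{0, \ldots, a-1\}$) the unique maximizer is $a$ itself, so $k = 0$. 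Either way there is a unique valid $\tau = \varphi^k$ in the cyclic group, which gives both existence and uniqueness.

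I expect the main obstacle to be the index bookkeeping in the shift calculation—verifying that the two cases of the split at $l = k$ both reduce to the same clean inequality $M_i \leq M_{a-k}$—together with the degenerate boundary case where the maximum of $\{M_0, \ldots, M_{a-1}\}$ equals $0 = M_0 = M_a$, so that $\{M_0, \ldots, M_a\}$ has two tied maxima and one must check that only the one at $i = a$ corresponds to an admissible shift parameter $k \in \{0, \ldots, a-1\}$. Once the reformulation in terms of $M_j$ is in place, the rest is a clean invocation of the distinctness afforded by $\gcd(a,b) = 1$.
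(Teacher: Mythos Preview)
Your argument is correct and is essentially the Dvoretzky--Motzkin cycle lemma, carried out cleanly: the reformulation via $M_j=aT_j-jb$ turns membership of each rotate in $\mathcal{Z}_{a,b}$ into the statement that $a-k$ maximizes $M$ over $\{1,\dots,a\}$, and the congruence $M_j\equiv -jb\pmod a$ together with $\gcd(a,b)=1$ pins down that maximizer uniquely, with the boundary case $i^\ast=0$ handled by the observation that $a-k$ never equals $0$.

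The paper takes a genuinely different route. It proves uniqueness by a direct contradiction---if both $\mathbf z$ and $\varphi^j(\mathbf z)$ lay in $\mathcal{Z}_{a,b}$ one gets two partial-sum inequalities summing to strictly less than $b$---and then obtains existence not constructively but by a counting argument: it invokes the already-established identity $|\mathcal{Z}_{a,b}|=\frac{1}{a+b}\binom{a+b}{a}=\frac{1}{a}|\mathcal{Z}'_{a,b}|$ (coming from Anderson's theorem) to conclude that since each orbit has $a$ elements and contains at most one element of $\mathcal{Z}_{a,b}$, it must contain exactly one. Your proof is self-contained and constructive (it tells you which rotate works), and in particular does not lean on the Catalan count; the paper's proof is shorter given what has come before, but is logically downstream of Theorem~\ref{th:two}. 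Either is acceptable, but yours could in fact be used to \emph{derive} the Catalan count rather than consume it.
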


\begin{proof}
If $\textbf{z}$ happens to be contained in $\mathcal{Z}_{a,b}$, then $\tau$ is the identity map. Otherwise, there exists some $0<j<a$ such that $\varphi^j(\textbf{z})\in\mathcal{Z}_{a,b}$, so $z_{a-j}+z_{a-j+1}+\dots+z_{a-1}\leq jb/a$. Meanwhile, $\textbf{z}\in\mathcal{Z}_{a,b}$ implies $z_{0}+z_{1}+\dots+z_{a-j-1}\leq(a-j-1)b/a$. These two inequalities contradict $\sum_{l=0}^{a-1}z_{l}=b$. Furthermore, $\varphi^j(\textbf{z})=\varphi^k(\textbf{z})$ if and only if $j\equiv k\pmod{a}$ since $a$ and $b$ are coprime.

On the other hand, $$|\mathcal{Z}_{a,b}|=\frac{1}{a+b}\binom{a+b}{a}=\frac{1}{a}\binom{a+b-1}{b}=\frac{1}{a}|\mathcal{Z'}_{a,b}|.$$
Hence each orbit has $a$ elements, among which there is one contained in $\mathcal{Z}_{a,b}$.
\end{proof}

\begin{corollary}
Let $\lambda\in\mathcal{P}_{a,b}$. Fix $\textbf{z}$ and $\textbf{y}$ the associated coordinates of $\lambda$ in $\mathcal{Z}_{a,b}$ and $\mathcal{Y}_{a,b}$, respectively. Then

(1)(Tripathi, \cite{Tripathi2009})
\begin{equation}\label{eq:nopy}
\#(\lambda)=\frac{1}{2}(a-1)(b-1)-\sum_{j=1}^{a-1}y_{j},
\end{equation}
and
\begin{equation}\label{eq:sizey}
\sigma(\lambda)=\frac{1}{24}(a^2-1)(b^2-1)+\frac{1}{2}\Bigg(\sum_{j=1}^{a-1}\Big(ay_{j}^{2}+b(a-1-2j)y_{j}\Big)-(\sum_{j=1}^{a-1}y_j)^2\Bigg).
\end{equation}

(2)\begin{equation}\label{eq:nopz}
\#(\lambda)=\frac{1}{2}(a-1)(b-1)-\sum_{j=0}^{a-2}(a-j-1)z_{j},
\end{equation}
and
\begin{equation}\label{eq:sizez}
\sigma(\lambda)=\frac{1}{24}(a^2-1)(b^2-1)-\frac{1}{2}\Big\{(b-z_0)(a-1)z_0+\sum_{j=1}^{a-2}\Big(\sum_{i=0}^{j-1}(j-i)z_i+\sum_{i=j+1}^{a-1}z_i\Big)(a-j-1)z_{j}\Big\}
\end{equation}

\end{corollary}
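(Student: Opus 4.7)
The plan is to translate everything through Theorem \ref{th:bij2}, which gives, for $\Phi_1(\lambda) = (n_0, n_1, \ldots, n_{a-1}) \in \mathcal{N}_a$, the formulas $\#(\lambda) = \sum_j n_j$ and $\sigma(\lambda) = \sum_j(j n_j + a\binom{n_j}{2}) - \binom{\#(\lambda)}{2}$. From Lemma \ref{lem:3.2}, the composite bijection $\mathcal{Y}_{a,b} \to \mathcal{N}_a$ sends $\mathbf{y}$ to the $\mathbf{n}$ with $n_0 = 0$ and $n_{jb} = \lfloor jb/a\rfloor - y_j$ for $1 \leq j \leq a-1$. Since $\gcd(a,b) = 1$, the map $j \mapsto r_a(jb)$ is a permutation of $\{1, \ldots, a-1\}$, so one can reindex: every sum over the nonzero coordinates of $\mathbf{n}$ becomes a sum over $j \in \{1, \ldots, a-1\}$ in which $n_j$ is replaced by $n_{jb}$ and the weight $j$ attached to $n_j$ is replaced by $m_j := r_a(jb) = jb - a\lfloor jb/a\rfloor$. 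This single reindexing step is the backbone of both parts of the corollary.

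Set $L_j := \lfloor jb/a\rfloor$, $N_0 := (a-1)(b-1)/2$ and $Y := \sum_j y_j$. For \eqref{eq:nopy}, $\#(\lambda) = \sum_{j=1}^{a-1} n_{jb} = \sum_j L_j - Y$, and the classical identity $\sum_{j=1}^{a-1}\lfloor jb/a\rfloor = N_0$ (valid whenever $\gcd(a,b)=1$) closes the computation. For \eqref{eq:sizey}, substitute $n_{jb} = L_j - y_j$ into
\[
\sigma(\lambda) \;=\; \sum_{j=1}^{a-1}\left[m_j n_{jb} + a\binom{n_{jb}}{2}\right] - \binom{N_0 - Y}{2}
\]
and expand. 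Using $m_j + aL_j = jb$, the coefficient of the linear term $y_j$ reduces to $b(a-1-2j)/2$ (the $(-jb + a/2)$ coming from inside the bracket combines with the $N_0 - 1/2$ coming from the linear-in-$Y$ part of $\binom{N_0 - Y}{2}$), the coefficient of $y_j^2$ is $a/2$, and the coefficient of the cross-term $Y^2$ is $-1/2$; this produces the $y$-dependent half of \eqref{eq:sizey}. The remaining $y$-independent part is simply $\sigma(\lambda)$ evaluated at $\mathbf{y} = \mathbf{0}$, that is, the size of the $(a,b)$-core with $n_{jb} = L_j$ for every $j$. This core is precisely the maximal $(a,b)$-core, known (Olsson--Stanton) to have size $(a^2-1)(b^2-1)/24$, which supplies the constant.

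Part (2) is pure substitution of $y_j = \sum_{l=0}^{j-1} z_l$ into the formulas just established. For \eqref{eq:nopz}, interchanging the order of summation in $Y = \sum_{j=1}^{a-1}\sum_{l=0}^{j-1} z_l$ gives $Y = \sum_{l=0}^{a-2}(a-1-l)z_l$, which yields \eqref{eq:nopz} at once. For \eqref{eq:sizez}, one expands $y_j^2 = \sum_l z_l^2 + 2\sum_{l_1 < l_2 < j} z_{l_1} z_{l_2}$ and $Y^2$ analogously, regroups by the pair $(i,j)$ of $z$-indices, and uses the defining relation $\sum_{l=0}^{a-1} z_l = b$ of $\mathcal{Z}_{a,b}$ to eliminate $z_{a-1}$; this rearrangement is what forces the asymmetric special treatment of $z_0$ and the outer cap $j \leq a-2$ in the stated formula.

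The main obstacle is pinning down the constant $(a^2-1)(b^2-1)/24$ in \eqref{eq:sizey}: one either invokes the classical maximal-core result as above, or computes directly by writing $L_j = (jb - m_j)/a$ and evaluating $\sum_j jb L_j$ via the identities $\sum_j L_j = N_0$, $\sum_j m_j = \binom{a}{2}$ and $\sum_j m_j^2 = (a-1)a(2a-1)/6$ (the last two hold because $j \mapsto m_j$ permutes $\{1,\ldots,a-1\}$). The $\mathbf{y} \to \mathbf{z}$ conversion leading to \eqref{eq:sizez} is tedious but entirely mechanical once one commits to eliminating $z_{a-1}$.
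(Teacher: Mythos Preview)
Your approach is correct and matches the paper's: both reindex via $j\mapsto r_a(jb)$, substitute $n_{jb}=\lfloor jb/a\rfloor-y_j$ into the formulas of Theorem~\ref{th:bij2}, expand, and then pass from $\mathbf{y}$ to $\mathbf{z}$ by $y_j=\sum_{l<j}z_l$. The only difference is in handling the constant $(a^2-1)(b^2-1)/24$: the paper computes it directly (your second option, using $\sum m_j=\binom{a}{2}$ and $\sum m_j^2=(a-1)a(2a-1)/6$), whereas your primary suggestion of citing Olsson--Stanton is logically fine but slightly circular in spirit here, since the paper immediately re-derives that maximum from~\eqref{eq:sizey}.
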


\begin{proof}
Recall Theorem \ref{th:bij2}, $\#(\lambda)=\sum_{j=0}^{a-1}n_{j}$ and $\sigma(\lambda)=\sum_{j=0}^{a-1}\Big(jn_{j}+a\binom{n_{j}}{2}\Big)-\binom{\#(\lambda)}{2}$ where $\textbf{n}=\Phi_{1}(\lambda)$.

By the bijection $(5)\to(1)$ in the proof of Lemma \ref{lem:3.2},
\begin{eqnarray*}
  \#(\lambda) &=& \sum_{j=0}^{a-1}n_{j} = \sum_{j=0}^{a-1}n_{jb}= \sum_{j=1}^{a-1}\lfloor jb/a\rfloor-\sum_{j=1}^{a-1}y_{j}\\
  &=& \sum_{j=1}^{a-1}\frac{jb-r_a(jb)}{a}-\sum_{j=1}^{a-1}y_{j}= \frac{b}{a}\sum_{j=1}^{a-1}j-\frac{1}{a}\sum_{j=1}^{a-1}j-\sum_{j=1}^{a-1}y_{j}\\
  &=& \frac{1}{2}(a-1)(b-1)-\sum_{j=1}^{a-1}y_{j},
\end{eqnarray*}
and
\begin{eqnarray*}
  \sigma(\lambda) &=& \sum_{j=0}^{a-1}\Big(jn_{j}+a\binom{n_{j}}{2}\Big)-\binom{\#(\lambda)}{2}\\
  &=& \sum_{j=0}^{a-1}\Big(r_a(jb)n_{jb}+a\binom{n_{jb}}{2}\Big)-\binom{\frac{1}{2}(a-1)(b-1)-\sum_{j=1}^{a-1}y_{j}}{2}\\
  &=& \sum_{j=1}^{a-1}\Big(r_a(jb)(\lfloor jb/a\rfloor-y_{j})+a\binom{\lfloor jb/a\rfloor-y_{j}}{2}\Big)-\binom{\frac{1}{2}(a-1)(b-1)-\sum_{j=1}^{a-1}y_{j}}{2}\\
  &=& \sum_{j=1}^{a-1}\Big(r_a(jb)\lfloor jb/a\rfloor+a\binom{\lfloor jb/a\rfloor}{2}\Big)-\binom{\frac{1}{2}(a-1)(b-1)}{2}\\
  &&+\sum_{j=1}^{a-1}\Big(a\binom{y_{j}+1}{2}-r_a(jb)y_{j}-a\lfloor jb/a\rfloor y_{j}\Big)-\binom{\sum_{j=1}^{a-1}y_{j}+1}{2}\\
  &&+\frac{1}{2}(a-1)(b-1)\sum_{j=1}^{a-1}y_{j}\\
  &=& \sum_{j=1}^{a-1}\Big(\frac{1}{a}r_a(jb)(jb-r_a(jb))+a\binom{jb/a}{2}+a\binom{r_a(jb)/a+1}{2}-r_a(jb)jb/a\Big)\\
  &&-\frac{1}{8}(a-1)(b-1)(ab-a-b-1)+\sum_{j=1}^{a-1}\Big(a\binom{y_{j}+1}{2}-jby_{j}\Big)\\
  &&-\binom{\sum_{j=1}^{a-1}y_{j}+1}{2}+\frac{1}{2}(a-1)(b-1)\sum_{j=1}^{a-1}y_{j}\\
  &=& \frac{b^2-1}{2a}\sum_{j=1}^{a-1}j^2-\frac{b-1}{2}\sum_{j=1}^{a-1}j-\frac{1}{8}(a-1)(b-1)(ab-a-b-1)\\
  &&+\frac{1}{2}\Bigg(\sum_{j=1}^{a-1}\Big(ay_{j}^{2}+b(a-1-2j)y_{j}\Big)-(\sum_{j=1}^{a-1}y_j)^2\Bigg)\\
  &=& \frac{1}{24}(a^2-1)(b^2-1)+\frac{1}{2}\Bigg(\sum_{j=1}^{a-1}\Big(ay_{j}^{2}+b(a-1-2j)y_{j}\Big)-(\sum_{j=1}^{a-1}y_j)^2\Bigg).
\end{eqnarray*}

By the bijection $(4)\to(5)$ in the proof of Lemma \ref{lem:3.2}, we have

\begin{equation*}
  \sum_{j=1}^{a-1}y_{j}=\sum_{j=1}^{a-1}\sum_{l=0}^{j-1}z_{l}=\sum_{j=0}^{a-2}(a-j-1)z_{j},
\end{equation*}

\begin{equation*}
  \sum_{j=1}^{a-1}jy_{j}=\sum_{l=0}^{a-2}\sum_{j=l+1}^{a-1}z_{l}=\sum_{j=0}^{a-2}\frac{1}{2}(a+j)(a-j-1)z_{j},
\end{equation*}
and
\begin{eqnarray*}
  \sum_{j=1}^{a-1}y_{j}^{2}&=&\sum_{l=0}^{a-2}\sum_{j=l+1}^{a-1}z_{l}^2+\sum_{j=2}^{a-1}\sum_{i=0}^{j-2}\sum_{k=i+1}^{j-1}z_{i}z_{k}\\
  &=&\sum_{j=0}^{a-2}(a-j-1)z_{j}^{2}+\sum_{j=1}^{a-2}(\sum_{i=0}^{j-1}z_{i})(a-j-1)z_{j}.
\end{eqnarray*}
Hence,
\begin{equation*}
  \#(\lambda)=\frac{1}{2}(a-1)(b-1)-\sum_{j=0}^{a-2}(a-j-1)z_{j},
\end{equation*}
and
\begin{eqnarray*}
\sigma(\lambda) &=& \frac{1}{24}(a^2-1)(b^2-1)-\frac{1}{2}\Big\{\sum_{j=0}^{a-2}(b-z_{j})(j+1)(a-j-1)z_{j}\\
&&-\sum_{j=1}^{a-2}\Big(\sum_{i=0}^{j-1}(i+1)z_i\Big)(a-j-1)z_{j}\Big\}\\
&=&\frac{1}{24}(a^2-1)(b^2-1)-\frac{1}{2}\Big\{(b-z_0)(a-1)z_0\\
&&+\sum_{j=1}^{a-2}\Big((b-z_j)(j+1)-\sum_{i=0}^{j-1}(i+1)z_i\Big)(a-j-1)z_{j}\Big\}\\
&=&\frac{1}{24}(a^2-1)(b^2-1)-\frac{1}{2}\Big\{(b-z_0)(a-1)z_0\\
&&+\sum_{j=1}^{a-2}\Big(\sum_{i=0}^{j-1}(j-i)z_i+\sum_{i=j+1}^{a-1}z_i\Big)(a-j-1)z_{j}\Big\}.
\end{eqnarray*}
\end{proof}

Tripathi used (\ref{eq:sizey}) to prove that the largest size of partitions that are $a$- and $b$-core partitions is $\frac{1}{24}(a^2-1)(b^2-1)$. By (\ref{eq:nopz}) and (\ref{eq:sizez}), we can also obtain the following corollary.

\begin{corollary}(\cite{Olsson2007, Tripathi2009})
$$max\{\#(\lambda):\lambda\in\mathcal{P}_{a,b}\}=\frac{1}{2}(a-1)(b-1),$$
$$max\{\sigma(\lambda):\lambda\in\mathcal{P}_{a,b}\}=\frac{1}{24}(a^2-1)(b^2-1).$$
\end{corollary}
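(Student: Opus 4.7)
The plan is to read both bounds directly off the explicit formulas (\ref{eq:nopz}) and (\ref{eq:sizez}) established in the preceding corollary, by observing that the quantity subtracted from $\tfrac{1}{2}(a-1)(b-1)$ or $\tfrac{1}{24}(a^2-1)(b^2-1)$ is manifestly nonnegative, and then exhibiting a $\textbf{z}\in\mathcal{Z}_{a,b}$ that achieves equality in both simultaneously.

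For the upper bound on $\#(\lambda)$, every summand in $\sum_{j=0}^{a-2}(a-j-1)z_{j}$ is nonnegative because each $z_{j}\geq 0$ and $a-j-1\geq 1$ throughout the range, so (\ref{eq:nopz}) yields $\#(\lambda)\leq \tfrac{1}{2}(a-1)(b-1)$. For the upper bound on $\sigma(\lambda)$, I would argue similarly that the bracketed expression in (\ref{eq:sizez}) is nonnegative: the leading term $(b-z_{0})(a-1)z_{0}$ is nonnegative since $0\leq z_{0}\leq b$ (which follows from $z_{0}\geq 0$ together with $\sum_{l=0}^{a-1}z_{l}=b$ and $z_{l}\geq 0$), while every remaining factor $(a-j-1)$, $z_{j}$, $(j-i)z_{i}$, and $z_{i}$ in the double sum is nonnegative by construction.

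To see both bounds are tight, I would choose $z_{0}=z_{1}=\cdots=z_{a-2}=0$ and $z_{a-1}=b$, and verify membership in $\mathcal{Z}_{a,b}$: the periodic sum condition $\sum_{l=j}^{j+a-1}z_{l}=b$ is immediate, and the partial-sum inequality $\sum_{l=0}^{j-1}z_{l}\leq jb/a$ holds because the left side is $0$ for $1\leq j\leq a-1$ and equals $b=\tfrac{ab}{a}$ at $j=a$. Substituting this $\textbf{z}$ into (\ref{eq:nopz}) makes the correction sum vanish identically, so $\#(\lambda)=\tfrac{1}{2}(a-1)(b-1)$; substituting into (\ref{eq:sizez}) makes the $(b-z_{0})z_{0}$ term vanish (since $z_{0}=0$) and kills every $j\geq 1$ summand through the factor $z_{j}$, giving $\sigma(\lambda)=\tfrac{1}{24}(a^{2}-1)(b^{2}-1)$.

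There is no real obstacle here—the argument is a nonnegativity check plus one explicit maximizer—but two small points warrant attention when writing it out. First, I should confirm the bound $z_{0}\leq b$ explicitly before claiming $(b-z_{0})z_{0}\geq 0$; this follows from the $\mathcal{Z}_{a,b}$ definition but is worth a sentence. Second, the degenerate case $a=1$ (where $\mathcal{P}_{a,b}=\{\emptyset\}$ and both maxima are $0$) matches the stated formulas trivially and can be dismissed in a line. With those remarks, the proof reduces to the two-line nonnegativity observation and the single-line verification of the extremal $\textbf{z}$.
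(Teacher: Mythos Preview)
Your proposal is correct and is exactly the approach the paper indicates: the paper simply remarks that the corollary follows from (\ref{eq:nopz}) and (\ref{eq:sizez}) without writing out the nonnegativity argument or the extremal $\textbf{z}$, and you have supplied precisely those details. The only cosmetic point is that $z_{0}\leq b/a$ already follows from the partial-sum condition at $j=1$, so the weaker bound $z_{0}\leq b$ is not even needed.
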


Baek, Nam and Yu \cite{Baek2019} gave an expression for the cardinality of $\mathcal{P}_{t_1,t_2,\dots,t_r}$ where $\{t_1$, $t_2$, $\cdots$, $t_r\}$ has at least one pair of relatively prime numbers. At a seminar in Penn State University, Nam asked whether it is possible to obtain the cardinality of $\mathcal{P}_{s,s+1,\ldots,s+r}$ by applying this expression. To answer it, we generalize the expression a little bit.

\begin{theorem}\label{th:mul}
Let $c_1$, $c_2$, $\cdots$, $c_n$ be positive integers. For $1\leq i\leq n$, let $d_{i}$ be the positive integer such that $1\leq d_{i}\leq a$ and $bd_{i}+c_{i}\equiv0\pmod{a}$. Then there is a bijection from $\mathcal{P}_{a,b,c_1,\ldots,c_n}$ to $\mathcal{Z}_{a,b}\bigcap\Big\{\bigcap_{i=1}^{r}\{\sum_{l=j}^{j+d_i-1}z_{l}\leq(bd_{i}+c_{i})/a$ for $j\geq0\}\Big\}$.

Furthermore, the cardinality of $\mathcal{P}_{a,b,c_1,\ldots,c_n}$ is
$$\frac{1}{a}\Big|\mathcal{Z'}_{a,b}\bigcap\Big\{\bigcap_{i=1}^{n}\{\sum_{l=j}^{j+d_i-1}z_{l}\leq(bd_{i}+c_{i})/a \text{ for } j\geq0\}\Big\}\Big|.$$
\end{theorem}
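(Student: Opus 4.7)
The approach is to add the extra $c_i$-core constraints on top of the chain of bijections $(1)$--$(5)$ in Lemma \ref{lem:3.2}, and then exploit the cyclic-orbit structure of Corollary \ref{cor} to read off the cardinality. The hypothesis $\gcd(a,b)=1$ is already in force throughout Section 3, while the $c_i$ are arbitrary positive integers.

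First, I would prove a variant of Lemma \ref{lem:3.1} for arbitrary $c$: an $a$-core $\lambda$ with image $\mathbf{n}\in\mathcal{N}_a$ is also a $c$-core if and only if $n_{j+c}-n_j\leq\lfloor(r_a(j)+c)/a\rfloor$ for every $j\geq 0$. The proof is literally that of Lemma \ref{lem:3.1} with $b$ replaced by $c$, since only the characterization ``$x\in\beta(\lambda),\ x\geq c \Rightarrow x-c\in\beta(\lambda)$'' is invoked and no coprimality is used. Substituting the bijection $(2)\to(3)$, $x_j=n_j+r_a(j)/a$, and simplifying with $r_a(j+c)=r_a(j)+c-a\lfloor(r_a(j)+c)/a\rfloor$, this condition becomes the clean inequality $x_{j+c_i}-x_j\leq c_i/a$ for all $j\geq 0$.

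The next step, which I expect to be the main technical hurdle, is transporting this to the $\mathbf{z}$-coordinates of $(4)$. Because $x$ has period $a$ and $k\mapsto r_a(kb)$ is a bijection of $\{0,1,\ldots,a-1\}$, the inequality only needs to be checked at $j=kb$. The defining congruence $bd_i+c_i\equiv 0\pmod{a}$ gives $kb+c_i\equiv(k-d_i)b\pmod{a}$, so periodicity yields $x_{kb+c_i}=x_{(k-d_i)b}$. Telescoping $z_m=x_{mb}-x_{(m+1)b}+b/a$ from $m=k-d_i$ to $m=k-1$ produces
$$x_{(k-d_i)b}-x_{kb}=\sum_{m=k-d_i}^{k-1}z_m-\frac{d_ib}{a},$$
so after the shift $j=k-d_i$ the condition $x_{j+c_i}-x_j\leq c_i/a$ is exactly $\sum_{l=j}^{j+d_i-1}z_l\leq(bd_i+c_i)/a$ for all $j\geq 0$. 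Composing with the bijections of Lemma \ref{lem:3.2} then yields the desired bijection between $\mathcal{P}_{a,b,c_1,\ldots,c_n}$ and the constrained subset of $\mathcal{Z}_{a,b}$.

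For the cardinality formula, I would invoke Corollary \ref{cor}: the cyclic operator $\varphi$ partitions $\mathcal{Z'}_{a,b}$ into orbits of length $a$, each meeting $\mathcal{Z}_{a,b}$ in exactly one element. Each additional constraint $\sum_{l=j}^{j+d_i-1}z_l\leq(bd_i+c_i)/a$ is quantified over all $j\geq 0$, hence is invariant under any cyclic shift of $\mathbf{z}$. Therefore the constrained subset of $\mathcal{Z'}_{a,b}$ is $\langle\varphi\rangle$-stable, and the $a$-to-$1$ correspondence between $\mathcal{Z'}_{a,b}$ and $\mathcal{Z}_{a,b}$ restricts to the constrained sets, yielding the factor of $1/a$. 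The whole argument really hinges on the congruence $bd_i+c_i\equiv 0\pmod{a}$: it is precisely what makes the $c_i$-core condition on $x$, after the nonlinear reindexing $j\leftrightarrow jb\pmod a$, collapse into a window of $d_i$ consecutive $z$-entries with bound $(bd_i+c_i)/a$.
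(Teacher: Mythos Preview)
Your argument is correct and follows essentially the same route as the paper, which only sketches the proof by referring to \cite{Baek2019} for the bijection part and to Corollary~\ref{cor} for the factor $1/a$; you have supplied precisely the details (pushing the $c_i$-core condition through the $(2)\to(3)\to(4)$ bijections of Lemma~\ref{lem:3.2} and then using the $\varphi$-invariance of the window constraints) that the sketch leaves implicit. The one point the paper singles out that you do not comment on explicitly is the degenerate case $a\mid c_i$, where $d_i=a$ and the constraint $\sum_{l=j}^{j+a-1}z_l\le b+c_i/a$ is vacuous, but your derivation handles this case without modification.
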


The number $\frac{1}{a}$ appears due to Corollary \ref{cor}. We sketch the proof because it is similar to the one in \cite{Baek2019}. One thing has to be mentioned is the case $a|c_{i}$ for some $i$ in which $d_{i}=a$, so $\sum_{l=j}^{j+a-1}z_{l}(=b)\leq(ab+c_i)/a$ holds automatically. On the other hand, an $a$-core is apparently a $c_i$-core. Therefore, it is unnecessary to require $a\nmid c_{i}$.

\begin{remark}
Sometimes, we let $d_{i}=0$ if $a|c_{i}$ since it makes no difference.
\end{remark}

The theorem below follows.

\begin{theorem}\label{th:GH}
Let $C_{r}=\bigcap_{i=1}^{r-1}\{\sum_{l=j}^{j+i-1}z_{l}\leq i+1$ for $j\geq0\}$ for $r\geq2$. Then, we have

(1) there is a bijection between $\mathcal{P}_{s,s+1,\ldots,s+r}$ and $G_{s,r}:=\mathcal{Z}_{s+1,s}\bigcap C_r$;

(2) there is a bijection between $\mathcal{P}_{s,s-1,\ldots,s-r}$ and $H_{s,r}:=\mathcal{Z}_{s-1,s}\bigcap C_r$.

In particular, if $r\geq s$, then $\mathcal{P}_{s,s-1,\ldots,s-r}=\{\emptyset\}$ and $\mathcal{P}_{s,s+1,\ldots,s+r}=\mathcal{P}_{s,s+1,\ldots,2s}.$ Moreover, $|\mathcal{P}_{s,s+1,\ldots,s+r}|=2^{s-1}$ for $r\geq s$.

\end{theorem}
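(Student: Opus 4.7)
The plan has three stages: first, establish both bijections as direct applications of Theorem \ref{th:mul}; second, use a short stabilization argument to obtain the two "particular" statements; third, compute $|\mathcal{P}_{s,s+1,\ldots,2s}|$ via its characterization in terms of the largest hook length.

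For part (1), I would apply Theorem \ref{th:mul} with the coprime pair $(a,b)=(s+1,s)$ and $c_i=s+1+i$ for $i=1,\ldots,r-1$. Since $s\equiv -1\pmod{s+1}$, the congruence $bd_i+c_i\equiv 0$ gives $d_i\equiv i\pmod{s+1}$; for $1\le i\le s$ one has $d_i=i$ with $(bd_i+c_i)/a=i+1$, so the $i$-th condition from Theorem \ref{th:mul} matches the $i$-th factor of $C_r$ verbatim. For $i>s$, writing $i=q(s+1)+k$, the extra condition is either automatic in $\mathcal{Z}_{s+1,s}$ (when $k=0$, since $\sum_{l=j}^{j+s}z_l=s$) or strictly weaker than the $k$-th factor of $C_r$ (when $1\le k\le s$). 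Part (2) is parallel with $(a,b)=(s-1,s)$ and $c_i=s-1-i$, legitimate for $r\le s-1$ because then $c_{r-1}=s-r\ge 1$; working modulo $s-1$ yields $d_i=i$ and $(bd_i+c_i)/a=i+1$, again matching $C_r$ directly.

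For the stabilization, I would observe that every condition in $C_r$ indexed by $i\ge s-1$ is automatic given $\mathcal{Z}_{s+1,s}$: decomposing $i$ consecutive $z$'s into full periods of length $s+1$ (each contributing $s$) plus a shorter tail reduces the inequality to one already implied by $\mathcal{Z}_{s+1,s}$ alone, or by the factor of $C_r$ at a smaller index. Consequently $G_{s,r}=G_{s,s}$ for every $r\ge s$, giving $\mathcal{P}_{s,s+1,\ldots,s+r}=\mathcal{P}_{s,s+1,\ldots,2s}$. For part (2), the list $\{s,s-1,\ldots,s-r\}$ contains $1$ as soon as $r\ge s-1$; since the only $1$-core is $\emptyset$, we obtain $\mathcal{P}_{s,s-1,\ldots,s-r}=\{\emptyset\}$ for all $r\ge s-1$, in particular for $r\ge s$.

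To obtain the count $2^{s-1}$ I would prove the identity
$$\mathcal{P}_{s,s+1,\ldots,2s}=\{\lambda:\lambda_1+\lambda_1'\le s\},$$
noting that $\lambda_1+\lambda_1'-1$ is always the largest hook length of $\lambda$ (attained at the cell $(1,1)$), so the inclusion $\supseteq$ is immediate. For $\subseteq$, suppose $\lambda\in\mathcal{P}_{s,s+1,\ldots,2s}$ has $\lambda_1+\lambda_1'-1\ge s$; the first-row hook lengths $h_{1,j}=\lambda_1-j+\lambda'_j$ form a strictly decreasing sequence with consecutive gaps $1+(\lambda'_j-\lambda'_{j+1})$. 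Since $\lambda$ is an $s$-core, Theorem \ref{th:numberofparts}(3) gives $\#_j(\lambda)<s$, i.e., $\lambda'_j-\lambda'_{j+1}\le s-1$, so each gap is at most $s$; such a decreasing sequence starting at $\ge s$ and ending at $\ge 1$ cannot leap over the interval $\{s,s+1,\ldots,2s\}$ of $s+1$ integers, so some $h_{1,j}$ lies there, contradicting the core hypothesis. A short binomial summation then counts $\{\lambda:\lambda_1+\lambda_1'\le s\}$: partitions with $\lambda_1=a$, $\lambda_1'=b$ ($a,b\ge 1$) number $\binom{a+b-2}{b-1}$, and summing over $a+b=n$ and then over $2\le n\le s$ (plus the empty partition) gives $1+\sum_{n=2}^{s}2^{n-2}=2^{s-1}$. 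The main obstacle is the redundancy check in the first two paragraphs: matching the two parameterizations of the constraints (Theorem \ref{th:mul}'s versus $C_r$'s) across the ranges $i\le s$ and $i>s$, and verifying that both families of "extra" constraints are implied by $\mathcal{Z}_{s+1,s}$ together with the conditions at small $i$; everything else is bookkeeping and a small combinatorial identity.
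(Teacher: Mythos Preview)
Your argument is correct and, for parts (1), (2), and the stabilization $G_{s,r}=G_{s,s}$, follows essentially the same route as the paper: instantiate Theorem \ref{th:mul} at $(a,b)=(s\pm 1,s)$ and check that the constraints coming from $c_i$ with $i\ge s-1$ are absorbed by the periodicity of $\mathcal{Z}_{s\pm 1,s}$ together with the constraints at smaller $i$. Your handling of $\mathcal{P}_{s,\ldots,s-r}=\{\emptyset\}$ via the presence of $1$ in the list is also what the paper implicitly does.

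The one genuine difference is the count $|\mathcal{P}_{s,\ldots,2s}|=2^{s-1}$. The paper exploits the stabilization itself: since $\mathcal{P}_{s,\ldots,2s}=\mathcal{P}_{s,\ldots,s+r}$ for every $r\ge s$, any $\lambda$ in this set is a $t$-core for all $t\ge s$, hence every hook length is $<s$, i.e.\ $\beta(\lambda)\subseteq\{1,\ldots,s-1\}$; the bijection $\lambda\leftrightarrow\beta(\lambda)$ with arbitrary subsets of $\{1,\ldots,s-1\}$ then gives $2^{s-1}$ in one line. You instead prove $\mathcal{P}_{s,\ldots,2s}=\{\lambda:\lambda_1+\lambda_1'\le s\}$ directly, via the gap bound on first-row hook lengths, and then count with the binomial identity $\sum_{a+b=n}\binom{a+b-2}{b-1}=2^{n-2}$. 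The two characterizations are equivalent (the maximal hook length equals $\lambda_1+\lambda_1'-1$ and also equals $\max\beta(\lambda)$), so you are really proving the same set identity; the paper's route is shorter because it reuses the stabilization and the $\beta$-set bijection, while yours is a pleasant, self-contained argument that avoids invoking $r\to\infty$.
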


\begin{proof}
(1) Taking $(a,b,c_1,\cdots,c_n)=(s+1,s,s+2,s+3,\cdots,s+r)$ into Theorem \ref{th:mul}, we have $d_{i}=r_{s+1}(i)$ for $1\leq i\leq r-1$. Then there is a bijection from $\mathcal{P}_{s,s+1,\ldots,s+r}$ to $$\mathcal{Z}_{s+1,s}\bigcap\Big\{\bigcap_{i=1}^{r-1}\{\sum_{l=j}^{j+r_{s+1}(i)-1}z_{l}\leq1+i-s\lfloor i/(s+1)\rfloor\text{ for }j\geq0\}\Big\}$$
where
$$\mathcal{Z}_{s+1,s}=\{(z_{j})_{j\geq0}:\text{ }z_{j}\in\mathbb{Z}_{\geq0}\text{, }\sum_{l=j}^{j+s}z_{l}=s\text{ for }j\geq0\text{ and }\sum_{l=0}^{j}z_{l}\leq j\text{ for }0\leq j\leq s-1\}.$$

Notice that $\lfloor i/(s+1)\rfloor<\lfloor (i+s+1)/(s+1)\rfloor$ and $\sum_{l=j}^{j+i-1}z_{l}=s\lfloor i/(s+1)\rfloor+\sum_{l=j}^{j+r_{s+1}(i)-1}z_{l}$. It follows that there is a bijection between $\mathcal{P}_{s,s+1,\ldots,s+r}$ and $G_{s,r}.$

The proof of $(2)$ is similar and $$\mathcal{Z}_{s-1,s}=\{(z_{j})_{j\geq0}:\text{ }z_{j}\in\mathbb{Z}_{\geq0}\text{, }\sum_{l=j}^{j+s-2}z_{l}=s\text{ for }j\geq0\text{ and }\sum_{l=0}^{j}z_{l}\leq j+1\text{ for }0\leq j\leq s-3\}.$$

For any $\textbf{z}\in G_{s,r}(r\geq s)$, $$\sum_{l=0}^{s-1}z_{l}\leq s-1\text{, }\sum_{l=1}^{s}z_{l}=s$$ and $$\sum_{l=j}^{j+i-1}z_{l}=s\lfloor i/(s+1)\rfloor+\sum_{l=j}^{j+r_{s+1}(i)-1}z_{l}\leq s\lfloor i/(s+1)\rfloor+r_{s+1}(i)+1<i+1.$$ Thus, $\mathcal{P}_{s,s+1,\ldots,s+r}=\mathcal{P}_{s,s+1,\ldots,2s}.$ Let $r\to \infty$. Then for any $\lambda\in\mathcal{P}_{s,s+1,\ldots,2s}$, $\beta(\lambda)\subset\{1,2,\cdots,s-1\}$, so $|\mathcal{P}_{s,s+1,\ldots,2s}|=2^{s-1}$.
\end{proof}

In order to count the cardinality of $\mathcal{P}_{s,s+1,\ldots,s+r}$, we still need two important lemmas.

\begin{lemma}\label{lem:3.3}
Let $G_{k}=G_{s,r}\bigcap\{\sum_{l=0}^{k-1}z_{l}=k-1\}\bigcap\{\sum_{l=0}^{j}z_{l}<j$ for $k\leq j\leq s-1\}$ for $1\leq k\leq s$. Then,

(1) $\{G_{k}$, $1\leq k\leq s\}$ is disjoint and $\bigcup_{k=1}^{s}G_{k}=G_{s,r}$;

(2) there is a bijection between $G_{k}$ and $G_{k-1,r}\times H_{s-k+1,r}$.
\end{lemma}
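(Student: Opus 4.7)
The plan is to verify the two parts separately, with the combinatorial structure of the sequences doing most of the work. Throughout, write $S_j := \sum_{l=0}^{j} z_l$ for the partial-sum function of a candidate sequence $\textbf{z}$. For part (1), I would first observe that every $\textbf{z} \in G_{s,r}$ satisfies $z_0 = 0$ (from $S_0 \leq 0$ in $\mathcal{Z}_{s+1,s}$), so the set $E(\textbf{z}) := \{j \in \{0, \ldots, s-1\} : S_j = j\}$ always contains $0$. Setting $k - 1 := \max E(\textbf{z})$ assigns to each $\textbf{z}$ a unique $k \in \{1, \ldots, s\}$, and comparison with the defining conditions shows $\textbf{z} \in G_k$ for exactly this $k$. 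Disjointness and exhaustion follow at once.

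For part (2), the strategy is to cut $\textbf{z}$ at position $k$ into two self-contained blocks. In the main case $k \leq s-1$, the strict inequality $S_k < k$ together with $S_{k-1} = k-1$ forces $z_k = 0$, so $\textbf{z}$ splits as a first block $B_1 := (z_0, \ldots, z_{k-1})$ of length $k$ and sum $k-1$, a zero, and a second block $B_2 := (z_{k+1}, \ldots, z_s)$ of length $s-k$ and sum $s-k+1$. The proposed bijection $\Theta$ sends $\textbf{z}$ to $(z', z'')$, where $z'$ is the period-$k$ repetition of $B_1$ and $z''$ the period-$(s-k)$ repetition of $B_2$; the inverse concatenates $(z'_0, \ldots, z'_{k-1}, 0, z''_0, \ldots, z''_{s-k-1})$ and repeats with period $s+1$.

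Most verifications will be routine. The window-sum identities for $z'$ and $z''$ amount to $\sum_{l=0}^{k-1} z_l = k-1$ and $\sum_{l=k+1}^{s} z_l = s-k+1$, while the partial-sum bounds defining $\mathcal{Z}_{k, k-1}$ and $\mathcal{Z}_{s-k, s-k+1}$ follow from those on $\textbf{z}$: for $z'$, directly from $S_j \leq j$ for $j \leq k-2$; for $z''$, by rewriting $\sum_{l=0}^{j} z''_l = S_{k+1+j} - (k-1)$ and invoking the strict $G_k$-inequality $S_{k+1+j} \leq k+j$ for $k \leq k+1+j \leq s-1$.

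The hardest step will be checking that $z'$ and $z''$ satisfy $C_r$, because a wrap window of $z'$ is not literally a window of $\textbf{z}$. For windows of $z'$ confined to one period, $C_r$ for $\textbf{z}$ transfers directly. For a wrap window of $z'$ of length $i$, I would express its sum as the sum of a cyclic $\textbf{z}$-window of length $L := s+1+i-k$ minus the $B_2$-sum $s-k+1$ (the subtraction recording the positions $k, k+1, \ldots, s$ that the wrap skips in the underlying cyclic $\textbf{z}$). When $L \leq r-1$, applying $C_r$ to $\textbf{z}$ yields $z'$-window sum $\leq (L+1) - (s-k+1) = i+1$. When $L > r-1$, I would first reduce by the $\textbf{z}$-period: writing $L = q(s+1) + r'$ and using periodicity and $C_r$ on the length-$r'$ remainder gives $\textbf{z}$-window sum $\leq qs + (r'+1) = L - q + 1$, which still translates to $z'$-window sum $\leq i+1-q \leq i+1$. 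The argument for $z''$ is symmetric. Finally, the boundary case $k=s$ has $B_2$ empty and $z_s = 1$ forced by the window-sum equation, so $\Theta$ degenerates to the identification $G_s \cong G_{s-1,r}$ after treating $H_{1,r}$ as a singleton corresponding to the empty partition.
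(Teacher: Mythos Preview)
Your argument for part (1) and the construction of the bijection $\Theta$ in part (2) are correct and match the paper. The gap is in your verification that $z'\in C_r$ for wrap windows.

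When a wrap window of $z'$ has length $i<k$, the associated cyclic $\textbf{z}$-window has length $L=s+1+i-k<s+1$, so your period reduction $L=q(s+1)+r'$ gives $q=0$ and $r'=L$. In the subcase $L>r-1$ you are then invoking $C_r$ on a $\textbf{z}$-window of length $r'=L>r-1$, which $C_r$ does not control. This subcase is not vacuous: with $s=10$, $k=5$, $r=3$ and the wrap window $(z'_{4},z'_{0})$ of length $i=2$, one has $L=8$, and $C_3$ says nothing about length-$8$ windows of $\textbf{z}$.

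The paper avoids the long $\textbf{z}$-window altogether. It writes the wrap window of $\textbf{u}=z'$ as a head $u_0,\dots,u_{p-1}$ and a tail $u_{k-q},\dots,u_{k-1}$ with $p+q=i$, and bounds the pieces separately: the head by the $\mathcal{Z}_{s+1,s}$ partial-sum inequality $\sum_{l=0}^{p-1}z_l\le p-1$, and the tail (a genuine length-$q$ window of $\textbf{z}$ with $q\le i-1\le r-2$) by $C_r$ for $\textbf{z}$, giving total $\le(p-1)+(q+1)=i$. Lengths $i\ge k$ are handled by the reduction $\mathcal{Z}_{k,k-1}\cap C_r=\mathcal{Z}_{k,k-1}\cap C_{\min\{k-1,r\}}$ from Theorem~\ref{th:GH}. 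You should also supply the converse direction explicitly: for $\textbf{u}\in G_{k-1,r}$, $\textbf{v}\in H_{s-k+1,r}$, a $\textbf{z}$-window straddling position $k$ is bounded by combining the $C_r$ bound on the $\textbf{u}$-tail with the $\mathcal{Z}_{s-k,s-k+1}$ prefix bound on $\textbf{v}$, as the paper checks.
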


\begin{proof}
(1) By the definition, $\{G_{k}$, $1\leq k\leq s\}$ is disjoint and $\bigcup_{k=1}^{s}G_{k}\subset G_{s,r}$. For any $\textbf{z}\in\mathcal{Z}_{s+1,s}$, $z_0=0$, so $\bigcup_{k=1}^{s}G_{k}\supset G_{s,r}$.

(2) If $r=1$, then $C_{r}=\emptyset$, and
\begin{eqnarray*}
  G_k &=& \{(z_{j})_{j\geq0}:\text{ }z_{j}\in\mathbb{Z}_{\geq0}\text{, }\sum_{l=j}^{j+s}z_{l}=s\text{ for }j\geq0\text{ and }\sum_{l=0}^{j}z_{l}\leq j\text{ for }0\leq j\leq s-1\}\\
  && \bigcap\{\sum_{l=0}^{k-1}z_{l}=k-1\}\bigcap\{\sum_{l=0}^{j}z_{l}<j\text{ for }k\leq j\leq s-1\}\\
  &=& \{(z_{j})_{j\geq0}:\text{ }z_{j}\in\mathbb{Z}_{\geq0}\text{ and }z_{j}=z_{r_{s+1}(j)}\}\\
  &&\bigcap\{\sum_{l=0}^{k-1}z_{l}=k-1\text{, }\sum_{l=0}^{j}z_{l}\leq j\text{ for }0\leq j\leq k-1\}\bigcap\{z_k=0\}\\
  &&\bigcap\{\sum_{l=k+1}^{s}z_{l}=s-k+1\text{, }\sum_{l=k+1}^{k+1+j}z_{l}\leq j+1\text{ for }0\leq j\leq s-k-2\}\\
  &\cong& \mathcal{Z}_{k,k-1}\times\mathcal{Z}_{s-k,s-k+1}\\
  &=& G_{k-1,1}\times H_{s-k+1,1}
\end{eqnarray*}

When $r>1$, let $\textbf{z}\in Z_{s+1,s}$, $\textbf{u}\in Z_{k,k-1}$ and $\textbf{v}\in Z_{s-k,s-k+1}$ such that $z_{k}=0$, $z_{j}=u_{j}$ for $0\leq j\leq k-1$ and $z_{j}=v_{j-k-1}$ for $k+1\leq j\leq s$. It is sufficient to prove that $\textbf{z}\in C_r$ if and only if $\textbf{u}$ and $\textbf{v}$ are both contained in $C_{r}$.

If $\textbf{z}\in C_r$, then for $1\leq i\leq r-1$, sum of any $i$ consecutive coordinates of $\textbf{z}$ is not greater than $i+1$. Let $p$ and $q$ be two nonnegative integers with sum $i$. Then
$$\sum_{l=0}^{p-1}u_l+\sum_{l=0}^{q-1}u_{k-1-l}=\sum_{l=0}^{p-1}z_l+\sum_{l=0}^{q-1}z_{k-1-l}\leq p-1+q+1=i<i+1\text{ for }i\leq k-1,$$
$$\sum_{l=0}^{p-1}v_l+\sum_{l=0}^{q-1}v_{s-k-1-l}=\sum_{l=0}^{p-1}z_{l+k+1}+\sum_{l=0}^{q-1}z_{s-l}\leq p+q+1=i+1\text{ for }i\leq s-k+1.$$
By Theorem \ref{th:GH}, $\mathcal{Z}_{s\pm1,s}\bigcap C_{r}=\mathcal{Z}_{s\pm1,s}\bigcap C_{min\{s,r\}}$. Thus, $\textbf{u}$ and $\textbf{v}$ are both contained in $C_{r}$.

Conversely, if $\textbf{u}$ and $\textbf{v}$ belong to $C_{r}$. Let $p'$ and $q'$ be two nonnegative integers such that $p'+q'+1=i\leq min\{s,r\}$. Then
$$\sum_{l=k-p'}^{k+q'}z_{l}=\sum_{l=1}^{p'}u_{k-l}+\sum_{l=0}^{q'-1}v_{l}\leq p'+1+q'+1=i+1.$$
Thus $\textbf{z}\in C_r$.
\end{proof}

For $\omega\in\mathcal{P}_{t}$, define $\alpha(\omega)=\sum_{j=0}^{t-1}\binom{(\Phi_1(\omega))_{j}}{2}$ and $\gamma(\omega)=\sum_{j=0}^{t-1}j(\Phi_1(\omega))_{j}$.

For $1\leq i\leq s$, let $\textbf{z}\in G_{k}$, $\textbf{u}\in G_{k-1,r}$ and $\textbf{v}\in H_{s-k+1,r}$ such that $z_{k}=0$, $z_{j}=u_{j}$ for $0\leq j\leq k-1$ and $z_{j}=v_{j-k-1}$ for $k+1\leq j\leq s$.

\begin{lemma}\label{lem:3.4}
Denote the corresponding partitions for $\textbf{z}$, $\textbf{u}$ and $\textbf{v}$ by $\lambda$, $\mu$ and $\nu$, respectively. Then

(1)\begin{equation}\label{eq:part}
\#(\lambda)=\#(\mu)+\#(\nu)+s-k.
\end{equation}

(2)\begin{equation}\label{eq:alpha}
\alpha(\lambda)=\alpha(\mu)+\alpha(\nu)+\#(\nu).
\end{equation}

(3) \begin{equation}\label{eq:gamma}
\gamma(\lambda)=(s-k)\#(\lambda)+\gamma(\mu)-\gamma(\nu)+\#(\mu)+\binom{s-k}{2}.
\end{equation}

(4)\begin{align}
\sigma(\lambda)&= (s+1)\alpha(\lambda)+\gamma(\lambda)-(s-k)\#(\lambda)+\sigma(\mu)-k\alpha(\mu)-\gamma(\mu)\nonumber\\
&+\sigma(\nu)-(s-k)\alpha(\nu)-\gamma(\nu)-\#(\mu)\#(\nu)+\binom{s-k+1}{2}.
\label{eq:size}
\end{align}
\end{lemma}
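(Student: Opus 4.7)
My plan is to prove (1), (2), and (3) by computing $\Phi_1(\lambda)$, $\Phi_1(\mu)$, and $\Phi_1(\nu)$ explicitly from the coordinate tuples $\textbf{z}, \textbf{u}, \textbf{v}$ via the chain of bijections $(4)\to(5)\to(1)$ spelled out in the proof of Lemma \ref{lem:3.2}, and then to deduce (4) from (1)--(3) together with the identity $\sigma(\omega)=t\alpha(\omega)+\gamma(\omega)-\binom{\#(\omega)}{2}$ from Theorem \ref{th:bij2}.

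First I would write down the explicit formulas. For $\lambda$ (with $a=s+1$, $b=s$) one obtains $n^{\lambda}_0=0$ and $n^{\lambda}_i = (s-i)-\sum_{l=0}^{s-i}z_l$ for $1\leq i\leq s$; analogous formulas hold for $\mu$ (with $a=k$, $b=k-1$) and $\nu$ (with $a=s-k$, $b=s-k+1$). Using the splitting $z_l=u_l$ for $l<k$, $z_k=0$, $z_l=v_{l-k-1}$ for $l>k$, together with the saturation identities $\sum_{l=0}^{k-1}u_l=k-1$ and $\sum_{l=0}^{s-k-1}v_l=s-k+1$ coming from $\textbf{u}\in\mathcal{Z}_{k,k-1}$ and $\textbf{v}\in\mathcal{Z}_{s-k,s-k+1}$, the index set $\{0,1,\dots,s\}$ for $\lambda$ breaks into three blocks on which the values of $n^{\lambda}_\bullet$ can be read off directly: $n^{\lambda}_i = n^{\nu}_{s-k-i}+1$ for $1\leq i\leq s-k-1$, $n^{\lambda}_{s-k}=1$, and $n^{\lambda}_i = n^{\mu}_{i-s+k-1}$ for $s-k+1\leq i\leq s$.

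Once this coordinate-level correspondence is in hand, (1) and (2) follow immediately by summing $n^{\lambda}_i$ and $\binom{n^{\lambda}_i}{2}$ across the three blocks, invoking $\binom{n+1}{2}=\binom{n}{2}+n$ to handle the $\nu$-block contribution to $\alpha(\lambda)$. Formula (3) is the hardest step: expanding $\sum i\cdot n^{\lambda}_i$, the index reversal $p=s-k-i$ in the $\nu$-block produces $(s-k)\#(\nu)-\gamma(\nu)$ plus a purely numerical tail involving $\binom{s-k}{2}$, while the $\mu$-block contributes $\gamma(\mu)+(s-k+1)\#(\mu)$; absorbing the $(s-k)[\#(\mu)+\#(\nu)]$ term into $(s-k)\#(\lambda)$ via (1) then yields the stated formula.

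Finally (4) is a formal consequence: applying $\sigma=t\alpha+\gamma-\binom{\#}{2}$ to each of $\lambda,\mu,\nu$ with $t=s+1,k,s-k$ respectively, substituting (1)--(3), and simplifying with the identity $\binom{p+q+m}{2}=\binom{p}{2}+\binom{q}{2}+\binom{m}{2}+pq+m(p+q)$ at $p=\#(\mu),q=\#(\nu),m=s-k$ makes the algebra collapse to the desired form. The main obstacle will be the bookkeeping for (3)—tracking the indices, the reversal on the $\nu$-side, and the constant contributions coming from the saturation identities for $\textbf{u}$ and $\textbf{v}$; everything else is substitution once the three-block decomposition of $\Phi_1(\lambda)$ is in place.
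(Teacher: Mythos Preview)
Your approach is essentially the same as the paper's. You compute $\Phi_1(\lambda)$, $\Phi_1(\mu)$, $\Phi_1(\nu)$ via the chain $(4)\to(5)\to(1)$ from Lemma~\ref{lem:3.2}, obtain the three-block decomposition $n^{\lambda}_i=n^{\mu}_{i-s+k-1}$ for $s-k+1\le i\le s$ and $n^{\lambda}_i=n^{\nu}_{s-k-i}+1$ for $1\le i\le s-k$, and then read off (1)--(3) by summing and (4) from $\sigma=t\alpha+\gamma-\binom{\#}{2}$; this is exactly what the paper does. The only cosmetic difference is that the paper derives (1) first from the $z$-coordinate formula \eqref{eq:nopz} before passing to $\Phi_1$-coordinates, whereas you get (1) directly from the block decomposition of $\Phi_1(\lambda)$---your route is slightly more uniform, but the two are equivalent once the coordinate relations are in hand.
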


\begin{proof}
(1) By (\ref{eq:nopz}), we have
\begin{eqnarray*}
 \#(\lambda)-\#(\mu)-\#(\nu) &=& \binom{s}{2}-\binom{k-1}{2}-\binom{s-k}{2}-\sum_{j=0}^{s}(s-j)z_{j}\\
 &&+\sum_{j=0}^{k-1}(k-1-j)u_{j}+\sum_{j=0}^{s-k-1}(s-k-1-j)v_{j}\\
 &=& -k^2+k+sk-1-\sum_{j=0}^{k-1}(s-k+1)z_{j}\\
 &=& -k^2+k+sk-1-(s-k+1)(k-1)\\
 &=& s-k.
\end{eqnarray*}

Let $\textbf{n}=\Phi_1(\lambda)$, $\textbf{m}=\Phi_1(\mu)$ and $\textbf{l}=\Phi_1(\nu)$. Then it follows from Lemma \ref{lem:3.2} that

$$n_{s+1-j}=n_{js}=j-1-\sum_{l=0}^{j-1}z_{l}\text{ for }1\leq j\leq s;$$
$$m_{k-j}=m_{j(k-1)}=j-1-\sum_{l=0}^{j-1}u_{l}\text{ for }1\leq j\leq k-1;$$
$$l_{j}=l_{j(s-k+1)}=j-\sum_{l=0}^{j-1}v_{l}\text{ for }1\leq j\leq s-k-1.$$

Moreover

$$n_{s+1-j}=m_{k-j}\text{ for }1\leq j\leq k,\qquad n_{s+1-j}=1+l_{j-k-1}\text{ for }k+1\leq j\leq s.$$

(2) \begin{eqnarray*}
 \alpha(\lambda) &=& \sum_{j=0}^{s}\binom{n_{j}}{2}=\sum_{j=1}^{k}\binom{m_{k-j}}{2}+\sum_{j=k+1}^{s}\binom{1+l_{j-k-1}}{2}\\
 &=& \sum_{j=0}^{k-1}\binom{m_{j}}{2}+\sum_{j=0}^{s-k-1}\binom{l_{j}}{2}+\sum_{j=0}^{s-k-1}l_{j}\\
 &=&\alpha(\mu)+\alpha(\nu)+\#(\nu).
\end{eqnarray*}

(3) \begin{eqnarray*}
 \gamma(\lambda) &=& \sum_{j=0}^{s}jn_{j}=\sum_{j=1}^{s}(s+1-j)n_{s+1-j}\\
 &=& \sum_{j=1}^{k}(s+1-j)m_{k-j}+\sum_{j=k+1}^{s}(s+1-j)(1+l_{j})\\
 &=& \sum_{j=0}^{k-1}(s+1-k+j)m_{j}+\sum_{j=k+1}^{s}(s+1-j)+\sum_{j=0}^{s-k-1}(s-k-j)l_{j}\\
 &=& (s-k+1)\#(\mu)+\gamma(\mu)+(s-k)\#(\nu)-\gamma(\nu)+\binom{s-k+1}{2}\\
 &=& \gamma(\mu)-\gamma(\nu)+\#(\mu)+(s-k)\#(\lambda)+\binom{s-k}{2}.
\end{eqnarray*}

(4) \begin{eqnarray*}
 \sigma(\lambda) &=& (s+1)\alpha(\lambda)+\gamma(\lambda)-\binom{\#(\lambda)}{2}\\
 &=& (s+1)\alpha(\lambda)+\gamma(\lambda)-\binom{\#(\mu)}{2}-\binom{\#(\nu)}{2}-\binom{s-k}{2}\\
 &&-\#(\mu)\#(\nu)-(s-k)(\#(\mu)+\#(\nu))\\
 &=& (s+1)\alpha(\lambda)+\gamma(\lambda)+\sigma(\mu)-k\alpha(\mu)-\gamma(\mu)+\sigma(\nu)-(s-k)\alpha(\nu)-\gamma(\nu)\\
 &&-\#(\mu)\#(\nu)-(s-k)\#(\lambda)+\binom{s-k+1}{2}\\
\end{eqnarray*}

\end{proof}

By Lemma \ref{lem:3.3} and Lemma \ref{lem:3.4}, we obtain the following.

\begin{theorem}\label{th:consecutive}
Let

$ N_{r}(s):=|\mathcal{P}_{s,s+1,\ldots,s+r}|$;

$ M_{r}(s):=max\{\#(\lambda):\lambda\in\mathcal{P}_{s,s+1,\ldots,s+r}\}$;

$ f_{r}(s):=\sum_{\lambda\in\mathcal{P}_{s,s+1,\ldots,s+r}}\#(\lambda)$;

$ g_{r}(s):=\sum_{\lambda\in\mathcal{P}_{s,s+1,\ldots,s+r}}\alpha(\lambda)$;

$ h_{r}(s):=\sum_{\lambda\in\mathcal{P}_{s,s+1,\ldots,s+r}}\gamma(\lambda)$;

$ T_{r}(s):=\sum_{\lambda\in\mathcal{P}_{s,s+1,\ldots,s+r}}\sigma(\lambda)$.

Then\\
(1) (Amdeberhan, \cite{Amdeberhan2015})
$$N_{r}(s)=\sum_{k=1}^{s}N_{r}(s-k)N_{r}(k-r)$$ and $N_{r}(s)=1$ for $s\leq0.$\\
(2) $$M_{r}(s)=M_{r}(s-r)+s-1$$ and $M_{r}(s)=0$ for $s\leq0$.\\
(3) $$f_{r}(s)=\sum_{k=1}^{s-1}\Big(2f_{r}(k)N_{r}(s-k-r)+kN_{r}(s-k-1)N_{r}(k-r+1)\Big)-\sum_{k=1}^{r-1}f_{r}(s+k-r)$$ and $f_{r}(s)=0$ for $s\leq0$.\\
(4) $$g_{r}(s)=\sum_{k=1}^{s-1}\Big(2g_{r}(k)N_{r}(s-k-r)+f_{r}(k-r+1)N_{r}(s-k-1)\Big)-\sum_{k=1}^{r-1}g_{r}(s+k-r)$$ and $g_{r}(s)=0$ for $s\leq0$.\\
(5) \begin{eqnarray*}h_{r}(s)&=& (s-k)f_{r}(s)+\sum_{k=1}^{r-1}h_{r}(s+k-r)+\sum_{k=0}^{s-1}f_{r}(k)N_{r}(s-k-r)\\
 &&+\sum_{k=1}^{s}\binom{s-k}{2}N_{r}(k-1)N_{r}(s-k-r+1)\end{eqnarray*} and $h_{r}(s)=0$ for $s\leq0$.\\
(6) \begin{eqnarray*}T_{r}(s) &=& (s+1)g_{r}(s)+h_{r}(s)-(s-k)f_{r}(s)\\
 &&+\sum_{k=1}^{s}\Bigg(\Big(T_{r}(k-1)-kg_{r}(k-1)-h_{r}(k-1)\Big)N_{r}(s-k+1)\\
 &&+\Big(T_{r}(k-r)-(k-1)g_{r}(k-r)-h_{r}(k-r)\Big)N_{r}(s-k)\\
 &&-f_{r}(k-1)f_{r}(s-k-r+1)+\binom{s-k+1}{2}N_{r}(k-1)N_{r}(s-k-r+1)\Bigg)
 \end{eqnarray*} and $T_{r}(s)=0$ for $s\leq0$.\\
\end{theorem}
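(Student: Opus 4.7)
The plan is to combine the decomposition $G_{s,r} = \bigsqcup_{k=1}^{s} G_{k}$ with $G_{k} \cong G_{k-1,r} \times H_{s-k+1,r}$ from Lemma \ref{lem:3.3} and the additive identities of Lemma \ref{lem:3.4}, using the identifications from Theorem \ref{th:GH}: $G_{k-1,r}$ corresponds to $\mathcal{P}_{k-1,k,\ldots,k-1+r}$ (so $|G_{k-1,r}| = N_{r}(k-1)$), and $H_{s-k+1,r}$ corresponds to $\mathcal{P}_{s-k+1-r,\ldots,s-k+1}$ (so $|H_{s-k+1,r}| = N_{r}(s-k+1-r)$); the sums of $\#,\alpha,\gamma,\sigma$ over each factor are the corresponding functions $f_{r},g_{r},h_{r},T_{r}$ at the same arguments. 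Throughout I adopt the conventions $N_{r}(m)=1$ and $f_{r}(m)=g_{r}(m)=h_{r}(m)=T_{r}(m)=0$ for $m\leq 0$.

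For (1), directly counting gives $N_{r}(s) = \sum_{k=1}^{s} N_{r}(k-1) N_{r}(s-k+1-r)$, which matches the stated form after reindexing $k\mapsto s-k+1$. For (3)--(6), I sum the corresponding identity from Lemma \ref{lem:3.4} over $\lambda\in G_{s,r}$ via Fubini on each product $G_{k-1,r}\times H_{s-k+1,r}$. For (3), equation (\ref{eq:part}) gives
\[ f_{r}(s) = \sum_{k=1}^{s}\Big[f_{r}(k-1)N_{r}(s-k+1-r) + N_{r}(k-1)f_{r}(s-k+1-r) + (s-k)N_{r}(k-1)N_{r}(s-k+1-r)\Big]. \]
Applying $k\mapsto s-k+1$ to one of the first two sums and invoking the conventions collapses that pair into $2\sum_{k=1}^{s-1} f_{r}(k)N_{r}(s-k-r)$; the excess coming from terms with $s-k+1-r\leq 0$ precisely produces the correction $-\sum_{k=1}^{r-1}f_{r}(s+k-r)$. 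A parallel reindexing of the third sum yields $\sum_{k=1}^{s-1} k N_{r}(s-k-1) N_{r}(k-r+1)$. Parts (4) and (5) proceed identically from (\ref{eq:alpha}) and (\ref{eq:gamma}), and (6) follows by combining these through (\ref{eq:size}).

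For (2), replace the sum by a max in (\ref{eq:part}):
\[ M_{r}(s) = \max_{1\leq k\leq s}\big(M_{r}(k-1) + M_{r}(s-k+1-r) + s-k\big). \]
The $k=1$ contribution is $M_{r}(s-r)+s-1$. By induction on $s$ one shows that $M_{r}$ is non-decreasing and that no $k\geq 2$ beats $k=1$, yielding the stated recurrence $M_{r}(s)=M_{r}(s-r)+s-1$.

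The main obstacle is the bookkeeping in (5) and (6). In (6) especially, equation (\ref{eq:size}) intertwines $T_{r}, g_{r}, h_{r}, f_{r}$ of both factors and several boundary corrections arising from the $m\leq 0$ conventions must be tracked in order to recover the precise form stated. The inductive argument for (2) is elementary but requires a small case analysis according to whether $k-1\leq 0$ or $s-k+1-r\leq 0$.
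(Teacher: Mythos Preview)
Your proposal is correct and follows essentially the same route as the paper: the paper also derives (1) directly from Lemma~\ref{lem:3.3}, obtains (3)--(6) by summing the identities of Lemma~\ref{lem:3.4} over the decomposition $G_{s,r}=\bigsqcup_k G_k\cong\bigsqcup_k G_{k-1,r}\times H_{s-k+1,r}$ with exactly the reindexing and boundary-correction manipulations you describe, and proves (2) by induction on $s$ from the max-version of (\ref{eq:part}) (in the paper's indexing the maximizer is $k=s$, which is your $k=1$). The only difference is cosmetic: the paper writes out the intermediate sums in (3)--(6) explicitly rather than appealing to Fubini, and its inductive argument for (2) spells out the case split $1\le k\le s-r$ versus $s-r+1\le k\le s$ that you allude to.
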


Authors of \cite{Yang2015} gave the recurrence relation above for $r=2$. Thus, we have generalized their result.

\begin{proof}[Proof of Theorem \ref{th:consecutive}]
(1) is a straightforward corollary to Lemma \ref{lem:3.3}.

By (\ref{eq:part}), $M_{r}(s)=max_{1\leq k\leq s}\{M_{r}(s-k)+M_{r}(k-r)+k-1\}$ where $M_{r}(s)=0$ for $s\leq0$. We prove (2) by induction on $s$. For $1\leq s\leq r$, recall the $\beta$-set is contained in $\{1,2,\cdots,s-1\}$, so $M_{r}(s)=s-1$.

When $s>r$, assume that $M_{r}(j)=M_{r}(j-r)+j-1$ for any $j<s$. It is obvious that $M_{r}(s)\geq M_{r}(s-r)+s-1$. For $1\leq k\leq s-r$, $M_{r}(s-r)\geq M_{r}(s-r-k)+M_{r}(k-r)+k-1\geq M_{r}(s-k)+M_{r}(k-r)+s-2$. Hence $M_{r}(s-k)+M_{r}(k-r)+k-1<M_{r}(s-r)+s-1.$ Furthermore,
\begin{eqnarray*}
M_{r}(s)&=&max_{s-r+1\leq k\leq s}\{M_{r}(s-k)+M_{r}(k-r)+k-1\}\\
&=&max_{0\leq k\leq r-1}\{M_{r}(k)+M_{r}(s-r-k)+s-k-1\}\\
&=&max_{1\leq k\leq r-1}\{M_{r}(s-r-k)+s-2\}\bigcap\{M_{r}(s-r)+s-1\}
\end{eqnarray*}
Notice that $M_{r}(j)$ is nondecreasing for $j<s$. Therefore, $M_{r}(s)=M_{r}(s-r)+s-1$.

(3) Again, by (\ref{eq:part}),
\begin{eqnarray*}
 f_{r}(s) &=& \sum_{k=1}^{s}\Big(f_{r}(k-1)N_{r}(s-k+1-r)+f_{r}(s-k+1-r)N_{r}(k-1)\\
 &&+(s-k)N_{r}(k-1)N_{r}(s-k+1-r)\Big)\\
 &=& \sum_{k=0}^{s-1}f_{r}(k)N_{r}(s-k-r)+\sum_{k=1}^{s}f_{r}(k-r)N_{r}(s-k)\\
 &&+\sum_{k=1}^{s}(k-1)N_{r}(s-k)N_{r}(k-r)\\
 &=& \Big(\sum_{k=0}^{s-1}+\sum_{k=0}^{s-r}\Big)f_{r}(k)N_{r}(s-k-r)+\sum_{k=1}^{s}(k-1)N_{r}(s-k)N_{r}(k-r)\\
 &=& \sum_{k=1}^{r-1}f_{r}(s+k-r)+2\sum_{k=0}^{s-r}f_{r}(k)N_{r}(s-k-r)+\sum_{k=1}^{s}(k-1)N_{r}(s-k)N_{r}(k-r)\\
 &=& \sum_{k=1}^{s-1}\Big(2f_{r}(k)N_{r}(s-k-r)+kN_{r}(s-k-1)N_{r}(k-r+1)\Big)\\
 &&-\sum_{k=1}^{r-1}f_{r}(s+k-r).
\end{eqnarray*}

(4) By (\ref{eq:alpha}),
\begin{eqnarray*}
 g_{r}(s) &=& \sum_{k=1}^{s}\Big(g_{r}(k-1)N_{r}(s-k-r+1)+g_{r}(s-k-r+1)N_{r}(k-1)\\
 && +f_{r}(s-k-r+1)N_{r}(k-1)\Big)\\
 &=& \sum_{k=1}^{s-1}\Big(2g_{r}(k)N_{r}(s-k-r)+f_{r}(k-r+1)N_{r}(s-k-1)\Big)\\
 &&-\sum_{k=1}^{r-1}g_{r}(s+k-r).
\end{eqnarray*}

(5) By (\ref{eq:gamma}),
\begin{eqnarray*}
 h_{r}(s) &=& (s-k)f_{r}(s)+\sum_{k=1}^{s}\Big((h_{r}(k-1)+f_{r}(k-1))N_{r}(s-k-r+1)\\
 &&-h_{r}(s-k-r+1)N_{r}(k-1)+\binom{s-k}{2}N_{r}(k-1)N_{r}(s-k-r+1)\Big)\\
 &=& (s-k)f_{r}(s)+\sum_{k=1}^{r-1}h_{r}(s+k-r)+\sum_{k=0}^{s-1}f_{r}(k)N_{r}(s-k-r)\\
 &&+\sum_{k=1}^{s}\binom{s-k}{2}N_{r}(k-1)N_{r}(s-k-r+1).
\end{eqnarray*}

(6) By (\ref{eq:size}),
\begin{eqnarray*}
 T_{r}(s) &=& (s+1)g_{r}(s)+h_{r}(s)-(s-k)f_{r}(s)\\
 &&+\sum_{k=1}^{s}\Bigg(\Big(T_{r}(k-1)-kg_{r}(k-1)-h_{r}(k-1)\Big)N_{r}(s-k+1)\\
 &&+\Big(T_{r}(s-k-r+1)-(s-k)g_{r}(s-k-r+1)\\
 &&-h_{r}(s-k-r+1)\Big)N_{r}(k-1)\\
 &&-f_{r}(k-1)f_{r}(s-k-r+1)+\binom{s-k+1}{2}N_{r}(k-1)N_{r}(s-k-r+1)\Bigg)\\
 &=& (s+1)g_{r}(s)+h_{r}(s)-(s-k)f_{r}(s)\\
 &&+\sum_{k=1}^{s}\Bigg(\Big(T_{r}(k-1)-kg_{r}(k-1)-h_{r}(k-1)\Big)N_{r}(s-k+1)\\
 &&+\Big(T_{r}(k-r)-(k-1)g_{r}(k-r)-h_{r}(k-r)\Big)N_{r}(s-k)\\
 &&-f_{r}(k-1)f_{r}(s-k-r+1)+\binom{s-k+1}{2}N_{r}(k-1)N_{r}(s-k-r+1)\Bigg).
\end{eqnarray*}
\end{proof}

\section{core partitions with distinct parts}

Core partitions with distinct parts have attracted many attentions since Amdeberhan \cite{Amdeberhan2015a} found the connection between the number of $(s,s+1)$-core partitions with distinct parts and the $(s+1)$th Fibonacci number $F_{s+1}$. The followings are some known fascinating results.

\begin{theorem}\label{th:known}
Let $\mathcal{D}_{t}$ be the set of core partitions with distinct parts, and $\mathcal{D}_{t_1,t_2,\ldots,t_r}=\bigcap_{i=1}^{r}\mathcal{D}_{t_i}$. Then

(1) $|\mathcal{D}_{s,s+1}|=F_{s+1};$

(2) $\mathcal{D}_{2s+1,2s+3}=4^s.$
\end{theorem}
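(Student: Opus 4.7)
The plan is to attack both parts using the bijection $\Phi_1$ from Theorem \ref{th:bij2}, together with Lemma \ref{lem:3.1}.

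For part (1), I first note that if $\Phi_1(\lambda) = (n_1,\ldots,n_{t-1})$ then by Remark (3) of Section 2, $\beta(\lambda) = \{(i-1)t+j : 1 \leq i \leq n_j,\, 1 \leq j \leq t-1\}$. Since $\lambda$ has distinct parts iff $\beta(\lambda)$ contains no two consecutive integers, and since for $t \geq 2$ two $\beta$-elements with $1 \leq j,j' \leq t-1$ can differ by $1$ only when they lie in the same row $i$ at adjacent residues, this translates to the clean condition $n_j n_{j+1} = 0$ for $1 \leq j \leq t-2$. Specializing Lemma \ref{lem:3.1} to $(a,b) = (s,s+1)$ and using $n_{j+s+1} = n_{j+1}$ (periodicity), the $(s+1)$-core condition reduces to $n_{j+1} \leq n_j + 1$ for $j = 0,\ldots,s-2$. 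A short induction on $j$ combining the two constraints forces $n_j \in \{0,1\}$ throughout. Hence $\Phi_1$ restricts to a bijection between $\mathcal{D}_{s,s+1}$ and binary strings of length $s-1$ having no two consecutive $1$'s, which is the classical Fibonacci count $F_{s+1}$.

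For part (2), applying Lemma \ref{lem:3.1} with $(a,b) = (2s+1,2s+3)$ and using $n_{j+2s+3} = n_{j+2}$ yields the $(2s+3)$-core conditions $n_{j+2} \leq n_j + 1$ for $0 \leq j \leq 2s-2$ together with the wrap-around $n_1 \leq n_{2s} + 2$ coming from $j = 2s$, while distinct parts again reads $n_j n_{j+1} = 0$ for $1 \leq j \leq 2s-1$. To count these tuples I stratify by $k := n_{2s}$ and set $f(s,k)$ to be the number of valid $\textbf{n}$ with $n_{2s} = k$. Direct enumeration for $s=1,2,3$ suggests $f(s,k) = \binom{2s+1}{s-k}$, from which
\[
|\mathcal{D}_{2s+1,2s+3}| = \sum_{k=0}^{s}\binom{2s+1}{s-k} = \sum_{j=0}^{s}\binom{2s+1}{j} = 2^{2s} = 4^s.
\]
I plan to prove $f(s,k) = \binom{2s+1}{s-k}$ by induction on $s$, establishing the Pascal-style recurrences $f(s,0) = 3 f(s-1,0) + f(s-1,1)$ and $f(s,k) = f(s-1,k-1) + 2 f(s-1,k) + f(s-1,k+1)$ for $k \geq 1$. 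The case analysis behind the recurrences is driven by the values of $n_1, n_2, n_{2s-1}, n_{2s}$, which are tightly pinned by the distinct-parts condition (e.g.\ $n_2 \in \{0,1\}$, and $n_2 = 1$ forces $n_1 = n_3 = 0$), together with a ``peel-off'' that removes a suitably chosen pair of coordinates.

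The main obstacle is precisely in part (2): the wrap-around constraint $n_1 \leq n_{2s} + 2$ couples the start and end of $\textbf{n}$, and couples the even- and odd-indexed sub-staircases, so any peel-off must verify that the reduced tuple still satisfies the correct $(2s-1,2s+1)$-wrap. An appealing cleaner alternative I intend to pursue is to transfer to the $\textbf{y}$-encoding of Lemma \ref{lem:3.2}: the distinct-parts condition translates neatly into a ``saturation'' requirement ``$y_k = k$ or $y_{s+k} = s+k+1$'' (and the analogous shifted version), after which one can hope to biject the resulting saturated lattice paths with $(s-k)$-subsets of $\{1,\ldots,2s+1\}$ and read off $\binom{2s+1}{s-k}$ directly, bypassing the recurrence altogether.
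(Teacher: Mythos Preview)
Your proof of part~(1) is correct and essentially coincides with the paper's own argument. Both reduce the problem via $\Phi_1$ and Lemma~\ref{lem:3.1} to the observation that the coordinates $n_j$ must all lie in $\{0,1\}$ with no two consecutive $1$'s, and then invoke the standard Fibonacci count. The only cosmetic differences are that you work in $\mathcal{N}_s$ (taking $(a,b)=(s,s+1)$) while the paper works in $\mathcal{N}_{s+1}$ (taking $(a,b)=(s+1,s)$), and the paper phrases the final count as $\sum_{i\ge0}\binom{s-i}{i}$ using the $P(\lambda)$-stratification rather than citing the binary-string interpretation directly. Your rederivation of Lemma~\ref{lem:4.1} from the $\beta$-set description is also correct.

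For part~(2) there is nothing in the paper to compare against: the paper does not supply a proof but only cites \cite{Yan2016} and \cite{Zaleski2017}. Your refined conjecture $f(s,k)=\binom{2s+1}{s-k}$ is correct (it checks for $s=1,2$), and the proposed recurrences are consistent with it via the genuine identities $\binom{2s+1}{m}=\binom{2s-1}{m}+2\binom{2s-1}{m-1}+\binom{2s-1}{m-2}$ and $\binom{2s+1}{s}=3\binom{2s-1}{s-1}+\binom{2s-1}{s-2}$, so the strategy is sound. The gap, which you yourself flag, is that the peel-off bijection is only asserted and never executed: you have not specified which pair of coordinates is removed, nor verified that the reduced tuple lands in the $(2s-1,2s+1)$ system with the correct wrap-around $m_1\le m_{2s-2}+2$. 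A naive deletion of the last two coordinates, for instance, only yields $n_1\le n_{2s-2}+3$ from $n_1\le n_{2s}+2$ and $n_{2s}\le n_{2s-2}+1$, which is too weak. Until the case analysis is actually written out and shown to be bijective, the argument for~(2) remains a plan rather than a proof; the alternative via the $\mathbf{y}$-encoding is at this stage only a hope.
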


(1) is conjectured by Amdeberhan \cite{Amdeberhan2015a} and proved in \cite{Straub2016} and \cite{Xiong2015}, while (2) is also conjectured by Amdeberhan \cite{Amdeberhan2015a} and proved in \cite{Yan2016} and \cite{Zaleski2017}.

In this section, we fix $\Phi_1(\lambda)=\textbf{n}$, and our main goal is to count the number of elements of $\mathcal{D}_{t}$ with additional restriction. The lemmas below are essential throughout the final section.

\begin{lemma}\label{lem:4.1}(\cite{Xiong2015})
$\lambda\in\mathcal{D}_{t}$ if and only if $n_{j}n_{j+1}=0$ for $j\geq0$.
\end{lemma}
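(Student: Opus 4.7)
The plan is to translate the distinct-parts condition into a statement about the first-column hook set $\beta(\lambda)$, then exploit the explicit description of $\beta(\lambda)$ in terms of $\mathbf{n} = \Phi_1(\lambda)$ given in Remark (3).

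First I would observe that for a partition $\lambda = (\lambda_1, \ldots, \lambda_l)$ with $\lambda_1 \geq \cdots \geq \lambda_l \geq 1$, the values $\beta_j := \lambda_j + l - j$ satisfy $\beta_j - \beta_{j+1} = \lambda_j - \lambda_{j+1} + 1 \geq 1$. Hence the elements of $\beta(\lambda)$, listed in decreasing order, have consecutive gaps all at least $2$ if and only if $\lambda_j > \lambda_{j+1}$ for every $j$. Therefore $\lambda \in \mathcal{D}_t$ if and only if $\beta(\lambda)$ contains no two consecutive integers.

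Next I would invoke Remark (3), which gives
\[
\beta(\lambda) = \{(i-1)t + j : 1 \leq i \leq n_j,\ 1 \leq j \leq t-1\}.
\]
So, writing an arbitrary nonnegative integer $a$ as $a = kt + j$ with $0 \leq k$ and $0 \leq j \leq t-1$, we have $a \in \beta(\lambda)$ exactly when $j \geq 1$ and $n_j \geq k+1$. I would then examine when $a$ and $a+1$ are simultaneously in $\beta(\lambda)$: if $1 \leq j \leq t-2$, this happens iff $\min(n_j, n_{j+1}) \geq k+1$; if $j = t-1$, then $a+1 = (k+1)t + 0$ has residue $0$ mod $t$, and since $n_0 = 0$ this case is impossible; the case $j = 0$ is likewise impossible because $a \notin \beta(\lambda)$. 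Consequently, $\beta(\lambda)$ contains two consecutive integers if and only if there exist $k \geq 0$ and $1 \leq j \leq t-2$ with $n_j \geq 1$ and $n_{j+1} \geq 1$, equivalently $n_j n_{j+1} > 0$ for some $1 \leq j \leq t-2$.

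Finally I would note that the conditions $n_0 n_1 = 0$ and $n_{t-1} n_t = 0$ (reading indices cyclically, so $n_t = n_0$) are automatic because $n_0 = 0$. Hence the overall condition $n_j n_{j+1} = 0$ for all $j \geq 0$ is equivalent to $n_j n_{j+1} = 0$ for $1 \leq j \leq t-2$, which by the previous paragraph is equivalent to $\beta(\lambda)$ having no two consecutive integers, i.e., to $\lambda \in \mathcal{D}_t$. There is no real obstacle here beyond careful bookkeeping of the residues at the boundary indices $j = 0$ and $j = t-1$; the key input is the explicit form of $\beta(\lambda)$ recorded in Remark (3).
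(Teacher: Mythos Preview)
Your proof is correct. The paper does not actually prove Lemma~\ref{lem:4.1}; it is simply quoted from \cite{Xiong2015}. Your argument is exactly the natural one: the observation that $\lambda$ has distinct parts iff $\beta(\lambda)$ contains no two consecutive integers, combined with the explicit description of $\beta(\lambda)$ in Remark~(3), makes the lemma immediate. The bookkeeping at the boundary residues $j=0$ and $j=t-1$ is handled correctly via $n_0=0$, and the reduction from ``there exist $k\ge 0$ and $j$ with $\min(n_j,n_{j+1})\ge k+1$'' to ``some $n_j n_{j+1}>0$'' is sound (take $k=0$ for one direction; the other is trivial).
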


With the help of the above lemma, we now give a new proof of (1) in Theorem \ref{th:known}.

\begin{proof}[Proof of (1) in Theorem \ref{th:known}]
Let $\lambda\in\mathcal{D}_{s,s+1}$. Taking $a=s+1$ and $b=s$ into Lemma \ref{lem:3.2}, we have $\textbf{n}\in\mathcal{N}_{s+1}$, $n_{j-1}-n_{j}\leq 1$ for $1\leq j\leq s$ and $n_{s}-n_{0}\leq 0$. Moreover, $n_{j}\leq 1$ for $1\leq j\leq s-1$ by Lemma \ref{lem:4.1}. Recall the definition of $P(\lambda)$ in the proof of Theorem \ref{th:mod}. For $0\leq i\leq s-1$, the number of $\lambda$ such that $P(\lambda)=s+1-i$ is $\binom{s-i}{i}$ by the combinatorial fact mentioned in the proof of Theorem \ref{th:numberofparts}. Therefore, $|\mathcal{D}_{s,s+1}|=\sum_{i\geq 0}\binom{s-i}{i}=F_{s+1}$.
\end{proof}

\begin{lemma}\label{lem:4.2}
Let $\lambda$ be the conjugate of a $t$-core partition with distinct parts. Then
\begin{equation}\label{lem:4.2:condition}
1\leq n_1\leq n_{t-1}+1\text{ and }n_{j}\leq max\{n_{j-1},\ n_{j-2}\}\text{ for }2\leq j\leq t-1.
\end{equation}
\end{lemma}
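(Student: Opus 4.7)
The plan is to convert the distinct-parts hypothesis on $\mu := \lambda'$ into a geometric condition on $\beta(\lambda)$, and then read off the constraints on $\textbf{n}$ from the abacus description. Since $\mu_j - \mu_{j+1}$ equals $\#_j(\lambda)$, the number of parts of $\lambda$ of size $j$, $\mu$ has distinct parts if and only if every integer in $\{1,\ldots,\lambda_1\}$ occurs as a part of $\lambda$. Writing $\beta(\lambda) = \{b_1 > b_2 > \cdots > b_l\}$ with $b_k = \lambda_k + l - k$, this translates to $1 \in \beta(\lambda)$ together with every consecutive difference $b_k - b_{k+1}$ lying in $\{1,2\}$; equivalently, no two consecutive integers in $\{1,2,\ldots,b_1\}$ are both absent from $\beta(\lambda)$.

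Next I would use the description $\beta(\lambda) = \{(i-1)t + j : 1 \leq i \leq n_j,\ 1 \leq j \leq t-1\}$ from the remark after Theorem~\ref{th:bij2} to pass from $\beta(\lambda)$ to $\textbf{n}$. Set $N = \max_j n_j$ and $j^* = \max\{j : n_j = N\}$, so $b_1 = (N-1)t + j^*$. Condition (i) is immediate from $1 \in \beta(\lambda)$. For (ii), focus on position $(N-1)t$: it is a multiple of $t$ and hence never in $\beta(\lambda)$. When $N \geq 2$ both of its neighbours $(N-1)t \pm 1$ lie in $[1,b_1]$, so the ``no two consecutive gaps'' condition forces $n_1 \geq N$ and $n_{t-1} \geq N-1$. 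Combined with the trivial $n_1 \leq N$, this yields $n_1 = N$ and $n_1 \leq n_{t-1} + 1$. The case $N = 1$ is trivial: (i) gives $n_1 \geq 1$, then $n_1 \leq N = 1$ forces $n_1 = 1$, and (ii) is automatic.

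For (iii) I would argue by contradiction: suppose $n_j > \max(n_{j-1}, n_{j-2})$ for some $2 \leq j \leq t-1$, and set $m = \max(n_{j-1}, n_{j-2})$. The case $j=2$ gives $n_2 > n_1 = N$, contradicting $n_2 \leq N$. For $j \geq 3$, columns $j-2$ and $j-1$ have no bead at level $m$, so the consecutive positions $(j-2) + mt$ and $(j-1) + mt$ are both gaps. One then checks these positions lie in $[1,b_1]$: the lower bound uses $j \geq 3$, and the upper bound splits on $m \leq N-2$ (routine estimation giving an endpoint at most $(N-1)t - 2$) versus $m = N-1$ (which forces $n_j = N$, hence $j \leq j^*$, so the right endpoint is at most $(j^*-1) + (N-1)t < b_1$). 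This produces two consecutive gaps in $[1,b_1]$, contradicting the first-paragraph characterization.

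The main subtlety is precisely this range check: the top level $N-1$ of the abacus is only partially filled (up to column $j^*$), so one must verify carefully that the gap pair produced by a failure of (iii) does not ``stick out'' above $b_1$. The observation that $n_j = N$ forces $j \leq j^*$ by definition of $j^*$ is the key point that keeps the delicate case $m = N-1$ under control.
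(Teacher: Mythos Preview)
Your proof is correct and rests on the same key observation as the paper: distinct parts in $\lambda'$ forces $1\in\beta(\lambda)$ and forbids two consecutive gaps in $\beta(\lambda)$, from which the inequalities on $\mathbf{n}$ follow via the abacus description. The paper phrases the $\beta$-condition in the forward form ``$a\in\beta(\lambda)$ implies $a-1\in\beta(\lambda)$ or $a-2\in\beta(\lambda)$'' and (implicitly) applies it to the top bead $(n_j-1)t+j$ of each column, which yields $n_j\le\max\{n_{j-1},n_{j-2}\}$ and $n_1\le n_{t-1}+1$ in one step and sidesteps the range-check against $b_1$ that your contradiction argument needs.
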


Since $\lambda'$ is in $\mathcal{D}_{t}$, $\#_{j}\geq 1$ for $1\leq j \leq \lambda_{1}$. Hence, $1$ belongs to $\beta(\lambda)$ and $a\in\beta(\lambda)$ only if $a-1$ or $a-2$ is in $\beta(\lambda)$. Thus, we have Lemma \ref{lem:4.2}.

Let $\mathcal{D}_{t;x}$ be the set of $t$-core partitions with distinct parts, in which the largest part is $x$. Let $\mathcal{D'}_{t;l}$ denote the set of $t$-core partitions into exactly $l$ distinct parts. By applying Lemma \ref{lem:4.1} and Lemma \ref{lem:4.2}, we obtain some interesting results.

\begin{theorem}\label{th:both2}

(1) $$|\mathcal{D'}_{t;l}|=\sum_{i\geq 1}\binom{t-i}{i}\binom{l-1}{i-1};$$

(2) $$|\mathcal{D}_{t;x}|=|\mathcal{D}_{t-1;x-1}|+|\mathcal{D}_{t-2;x-1}|.$$
for $2\leq x\leq t$, and
$$|\mathcal{D}_{t;0}|=1\text{ if }t\geq1,\qquad|\mathcal{D}_{t;1}|=1\text{ if }t\geq2$$
$$|\mathcal{D}_{1;x}|=0\text{ if }x\geq1,\qquad|\mathcal{D}_{2;x}|=1\text{ if }x\geq1.$$

In particular, $|\mathcal{D}_{t;x}|=2^{x-1}$ if $1\leq x\leq t/2$.

\end{theorem}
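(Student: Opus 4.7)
\medskip

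\noindent\textbf{Proof proposal.}

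For Part (1), I would apply Lemma \ref{lem:4.1} directly. Under the bijection $\Phi_1$, a $t$-core with exactly $\ell$ distinct parts corresponds to a tuple $(n_1,\dots,n_{t-1})\in\mathbb{Z}_{\ge 0}^{t-1}$ with $\sum_j n_j=\ell$ and $n_jn_{j+1}=0$ for all $0\le j\le t-2$ (using $n_0=0$). The latter condition says the support $S=\{j\in\{1,\dots,t-1\}:n_j>0\}$ contains no two consecutive integers. Conditioning on $|S|=i$, there are $\binom{t-i}{i}$ such supports (via the standard shift $j_k\mapsto j_k-(k-1)$) and $\binom{\ell-1}{i-1}$ ways to distribute $\ell$ as an ordered sum of $i$ positive integers; summing over $i\ge 1$ gives the formula.

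For Part (2), the base cases for $x\in\{0,1\}$, $t\in\{1,2\}$ follow immediately: the empty partition and $(1)$ are vacuously distinct and $t$-core for the appropriate $t$, while $\mathcal{P}_1=\{\emptyset\}$ and $\mathcal{P}_2$ consists of the staircases (all with distinct parts). The key work is the recurrence. My plan is to use Lemma \ref{lem:4.2} (together with its converse, which follows from the same $\beta$-set reasoning) to identify $\mathcal{D}_{t;x}$ bijectively with the set $\mathcal{T}_{t;x}$ of tuples $(n_1,\dots,n_{t-1})\in\mathbb{Z}_{\ge 0}^{t-1}$ satisfying $\sum_j n_j=x$, $1\le n_1\le n_{t-1}+1$, and $n_j\le\max\{n_{j-1},n_{j-2}\}$ for $2\le j\le t-1$. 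I would then split $\mathcal{T}_{t;x}$ into two classes according to whether the truncated tuple $(n_1,\dots,n_{t-2})$ already satisfies the Lemma \ref{lem:4.2} constraints for parameter $t-1$ (which after cancellation reduces to checking $n_1\le n_{t-2}+1$ and forces $n_{t-1}=1$) or not. In the first case, send the tuple to $(n_1,\dots,n_{t-2})\in \mathcal{T}_{t-1;x-1}$; in the second, one shows that necessarily $n_{t-2}=0$ and $n_{t-1}=1$, so sending the tuple to $(n_1,\dots,n_{t-3})\in \mathcal{T}_{t-2;x-1}$ is well defined, with inverse obtained by re-appending either $1$ or $(0,1)$. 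The inverse maps are well defined because these two classes of tuples are disjoint in the enlarged parameter.

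The closed form $|\mathcal{D}_{t;x}|=2^{x-1}$ for $1\le x\le t/2$ then follows by induction on $x$: the base $x=1$ is a base case above, and in the inductive step the inequalities $x-1\le (t-1)/2$ and $x-1\le (t-2)/2$ both hold, so the induction hypothesis applies to each summand on the right of the recurrence, yielding $2^{x-2}+2^{x-2}=2^{x-1}$.

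The main obstacle will be verifying the bijection for the recurrence. Checking that dropping one coordinate preserves all the Lemma \ref{lem:4.2} inequalities is routine, but establishing the dichotomy "drop one vs.\ drop two" requires a short analysis showing that when the one-step truncation violates the new $n_1\le n_{t-2}+1$ condition, the only possibility is $n_{t-2}=0$ together with $n_{t-1}=1$, and that the further truncation $(n_1,\dots,n_{t-3})$ then satisfies the $t-2$ version of the inequalities (in particular $n_1\le n_{t-3}+1$, which comes for free from $1=n_{t-1}\le\max\{n_{t-2},n_{t-3}\}=n_{t-3}$). This case analysis is where the argument lives.
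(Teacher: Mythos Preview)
Your argument for Part~(1) is correct and coincides with the paper's: both condition on the number $i$ of nonzero coordinates (equivalently, on $P(\lambda)=t-i$), count non-adjacent supports by $\binom{t-i}{i}$, and distribute $l$ among them by $\binom{l-1}{i-1}$.

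Part~(2), however, has a genuine gap. Your bijection always strips the \emph{last} one or two coordinates, but tuples in $\mathcal{T}_{t;x}$ may have trailing zeros, and then nothing forces $n_{t-1}=1$. Take $t=5$, $x=3$, and $\mathbf n=(n_0,\dots,n_4)=(0,1,1,1,0)$, which is $\Phi_1$ of the conjugate of $(3)\in\mathcal{D}_{5;3}$. Here $n_1=1\le n_{t-2}+1=n_3+1=2$, so by your criterion we are in Case~A; yet $n_{t-1}=0$, and truncating to $(1,1,1)$ has sum $3\ne x-1$, so the image is not in $\mathcal{T}_{4;2}$. (Under the alternative reading that Case~A also requires $n_{t-1}=1$, this tuple lands in Case~B, but then your Case~B conclusion $n_{t-2}=0$, $n_{t-1}=1$ fails.) The proposed inverse breaks too: from $(n_1,n_2)=(1,0)\in\mathcal{T}_{3;1}$, appending $(0,1)$ yields $(1,0,0,1)$, which violates $n_4\le\max\{n_3,n_2\}=0$ and so lies outside $\mathcal{T}_{5;2}$.

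The paper's fix is to work relative to $k=\max\{j:n_j>0\}$ rather than to the index $t-1$: one deletes the coordinate $n_k$ when $n_{k-1}\ge n_k=1$, and deletes the pair $n_{k-1},n_k$ when $n_{k-1}=0$; the inverse reinserts a $1$ (respectively $0,1$) immediately after the rightmost nonzero entry, preserving any trailing block of zeros. There is also a case you miss entirely: when $n_k>1$ one is forced (using $x\le t$) to $k=t-1$ and to the single exceptional tuple $(0,2,0,2,\dots,0,2)$, for which neither ``drop one'' nor ``drop two'' produces a tuple of the correct sum; the paper handles it by an ad hoc assignment to $(0,3,0,2,\dots,0,2)\in\mathcal{T}_{t-2;x-1}$. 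Your induction for $|\mathcal{D}_{t;x}|=2^{x-1}$ is fine once the recurrence is in hand.
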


\begin{proof}
(1) Let $\lambda\in\mathcal{D'}_{t;l}.$ Assume $P(\lambda)=t-i$ for some $1\leq i\leq t-1$. Recall $\#(\lambda)=\sum_{j=0}^{t-1}n_j$, so $\lambda$ has $\binom{t-i}{i}\binom{l-1}{i-1}$ different choices. Moreover, $|\mathcal{D'}_{t;l}|=\sum_{i\geq 1}\binom{t-i}{i}\binom{l-1}{i-1}$.

(2) It is easy to check the initial condition. For $t\geq3$, we consider the conjugates of elements in $\mathcal{D}_{t;x}$ and prove that there exists a bijection $\bar{\varphi}$ between $\Phi_1(\mathcal{D}_{t;x})$ and $\Phi_1(\mathcal{D}_{t-1;x-1})\bigcup\Phi_1(\mathcal{D}_{t-2;x-1})$. Let $\lambda'\in\mathcal{D}_{t;x}$. Then $\sum_{j=0}^{t-1}n_{j}=x$ and $\textbf{n}$ satisfies \eqref{lem:4.2:condition}. Let $k=max\{j<t:\ n_{j}>0\}$. Then $2\leq k\leq t-1$ since $x\geq 2$. Now we give the definition of $\bar{\varphi}$.

If $n_{k}=1$ and $n_{k}\leq n_{k-1}$, then we define $\bar{\varphi}(\textbf{n})=\textbf{u}$ such that $$\textbf{u}=(u_{0},u_{1},\cdots,u_{t-2})=(n_{0},n_{1},\cdots,n_{k-1},\cancel{n_{k}},n_{k+1},\cdots,n_{t-1}).$$ Thus, $\sum_{j=0}^{t-2}u_{j}=x-1$ and $\textbf{u}$ satisfies \eqref{lem:4.2:condition}. Hence, the conjugate of $\Phi^{-1}(\textbf{u})$ is in $\mathcal{D}_{t-1;x-1}$.

If $n_{k}=1$ and $n_{k-1}=0$, then we define $\bar{\varphi}(\textbf{n})=\textbf{v}$ such that $$\textbf{v}=(v_{0},v_{1},\cdots,v_{t-3})=(n_{0},n_{1},\cdots,n_{k-2},\cancel{n_{k-1}},\cancel{n_{k}},n_{k+1},\cdots,n_{t-1}).$$ Thus, $\sum_{j=0}^{t-2}v_{j}=x-1$ and $\textbf{v}$ satisfies \eqref{lem:4.2:condition}. Hence, the conjugate of $\Phi^{-1}(\textbf{v})$ is in $\mathcal{D}_{t-2;x-1}$.

If $n_{k}>1$, then $k=t-1$. Otherwise, $n_1=1$ and $n_{j}=0$ or $1$ for any $j$. Since $x\leq t$, we have $x=t$, $t$ is odd and $\textbf{n}=(0,2,0,2,\cdots,0,2)$. Define $\bar{\varphi}(\textbf{n})=\textbf{v}$ such that $\textbf{v}=(v_{0},v_{1},\cdots,v_{t-3})=(0,3,0,2,0,2,\cdots,0,2)$. Then the conjugate of $\Phi^{-1}(\textbf{v})$ is in $\mathcal{D}_{t-2;x-1}$.

Conversely, for any $\textbf{u}$ in $\Phi_1(\mathcal{D}_{t-1;x-1})\bigcup\Phi_1(\mathcal{D}_{t-2;x-1})$, we can easily find a partition $\lambda'$ in $\mathcal{D}_{t;x}$ such that $\bar{\varphi}(\textbf{n})=\textbf{u}$. Let $\mu=\Phi_1^{-1}(\textbf{u})$ and $l=max\{j<t-1:\ u_{j}>0\}$. If $u_1\leq2$ and $\mu\in\mathcal{D}_{t-1;x-1}$, then we have $$\textbf{n}=(u_0,u_1,\cdots,u_{l},1,u_{l+1},\cdots,u_{t-2}).$$ If $u_1\leq2$ and $\mu\in\mathcal{D}_{t-2;x-1}$, then we have $$\textbf{n}=(u_0,u_1,\cdots,u_{l},0,1,u_{l+1},\cdots,u_{t-3}).$$ If $u_1\geq3$, then by the condition \eqref{lem:4.2:condition}, we have $t$ is odd and $\textbf{u}=(0,3,0,2,0,2,\cdots,0,2)$. Thus we have $t$ is odd and $\Phi_1(\lambda)=(0,2,0,2,\cdots,0,2).$

Therefore, the recurrence relation $|\mathcal{D}_{t;x}|=|\mathcal{D}_{t-1;x-1}|+|\mathcal{D}_{t-2;x-1}|$ holds for $2\leq x\leq t$.
We prove $|\mathcal{D}_{t;x}|=2^{x-1}$ if $1\leq x\leq t/2$ by induction on $x$. For $t\geq2$, $|\mathcal{D}_{t;1}|=1$. Assume that claim holds for all $x<y$. $2\leq y\leq t/2$ implies $1\leq y-1\leq (t-2)/2\leq (t-1)/2$. Thus,  $|\mathcal{D}_{t;y}|=|\mathcal{D}_{t-1;y-1}|+|\mathcal{D}_{t-2;y-1}|=2^{y-2}+2^{y-2}=2^{y-1}.$

\end{proof}

\begin{remark}
(1) Our proof above also leads to the result that if $t$ is odd, then $|\mathcal{D}_{t;x}|=|\mathcal{D}_{t-1;x-1}|+|\mathcal{D}_{t-2;x-1}|$ for $2\leq x \leq t+1.$

(2) The following table shows the cardinality of $\mathcal{D}_{t;x}$ for $2\leq t\leq10$ and $0\leq x\leq 10.$
\end{remark}

\begin{center}
\begin{tabular}{|c|c|c|c|c|c|c|c|c|c|c|c|}
  \hline
  $\mathcal{D}_{t;x}$ & x=0 & 1 & 2 & 3 & 4 & 5 & 6 & 7 & 8 & 9 & 10\\\hline
  t=2 & 1 & 1 & 1 & 1 & 1 & 1 & 1 & 1 & 1 & 1 & 1 \\
  3 & 1 & 1 & 1 & 1 & 1 & 1 & 1 & 1 & 1 & 1 & 1 \\
  4 & 1 & 1 & 2 & 2 & 2 & 3 & 3 & 3 & 4 & 4 & 4 \\
  5 & 1 & 1 & 2 & 3 & 3 & 3 & 4 & 5 & 5 & 5 & 6 \\
  6 & 1 & 1 & 2 & 4 & 5 & 5 & 6 & 8 & 10 & 11 & 12 \\
  7 & 1 & 1 & 2 & 4 & 7 & 8 & 8 & 10 & 13 & 17 & 20 \\
  8 & 1 & 1 & 2 & 4 & 8 & 12 & 13 & 14 & 18 & 24 & 31 \\
  9 & 1 & 1 & 2 & 4 & 8 & 15 & 20 & 21 & 24 & 31 & 41 \\
  10 & 1 & 1 & 2 & 4 & 8 & 16 & 27 & 33 & 35 & 42 & 55 \\
  \hline
\end{tabular}
\end{center}

Finally, we end this paper with a fact about $\mathcal{D}_{s,s+1,t}$.

\begin{theorem}\label{th:last}
$\mathcal{D}_{s,s+1,t}=\mathcal{D}_{s,s+1}$ if $t>s+1$.
\end{theorem}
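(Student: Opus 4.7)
The plan is to prove the non-trivial inclusion $\mathcal{D}_{s,s+1} \subseteq \mathcal{D}_{s,s+1,t}$; the reverse inclusion is tautological. So I fix $\lambda \in \mathcal{D}_{s,s+1}$ with $t > s+1$ and aim to show $\lambda \in \mathcal{P}_{t}$. The main tool is the $\beta$-set: from $\beta(\lambda) = \{\lambda_j + \#(\lambda) - j : 1 \leq j \leq \#(\lambda)\}$, successive sorted entries differ by $\lambda_j - \lambda_{j+1} + 1$, so the distinct-parts hypothesis translates cleanly into the statement that $\beta(\lambda)$ contains no two consecutive integers. (This is essentially a $t$-free reformulation of Lemma \ref{lem:4.1}.)

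The heart of the argument will be the a priori bound $\beta(\lambda) \subseteq \{1, 2, \ldots, s+1\}$, which I will prove by contradiction. Suppose some $a \in \beta(\lambda)$ satisfies $a \geq s+2$. Then $a > s$ and $a > s+1$, so the characterization recorded in Remark (3) after Theorem \ref{th:bij2}, applied first with $t = s$ and then with $t = s+1$, yields $a - s \in \beta(\lambda)$ and $a - (s+1) \in \beta(\lambda)$. These two values are consecutive integers lying in $\beta(\lambda)$, contradicting the reformulation of the distinct-parts hypothesis from the previous step.

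With the bound in hand, verifying that $\lambda$ is a $t$-core for every $t > s+1$ is immediate. Since $t \geq s+2$, every element of $\beta(\lambda)$ is a positive integer strictly less than $t$, so no element of $\beta(\lambda)$ is a multiple of $t$ and no element exceeds $t$; hence both clauses of the characterization of $t$-cores hold vacuously, and $\lambda \in \mathcal{D}_{s,s+1,t}$. I do not foresee any real obstacle here: the whole argument collapses to the observation that any $a \in \beta(\lambda)$ with $a \geq s+2$ would force the forbidden consecutive pair $a - s - 1$ and $a - s$ into $\beta(\lambda)$. The only mildly delicate point is matching conventions, namely checking that ``distinct parts'' really is equivalent to ``no two consecutive integers in $\beta(\lambda)$,'' but this is immediate from the gap formula above.
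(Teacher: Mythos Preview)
Your proof is correct and lands on the same key fact as the paper: every element of $\beta(\lambda)$ is bounded by $s+1$ (in fact by $s-1$, since $s$ and $s+1$ are excluded as multiples), after which the $t$-core condition is vacuous for $t>s+1$. The paper reaches the equivalent statement $n_j\le 1$ for all $j$ in the $\Phi_1$-coordinates over $\mathcal{N}_{s+1}$ by combining Lemma~\ref{lem:4.1} (distinct parts forces $n_j n_{j+1}=0$) with Lemma~\ref{lem:3.1} (the $s$-core condition on an $(s{+}1)$-core gives $n_{j-1}\le n_j+1$), and then invokes Lemma~\ref{lem:3.1} once more with $b=t$ to check the $t$-core criterion. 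Your route is a little more elementary: it bypasses Lemma~\ref{lem:3.1} entirely and argues directly on $\beta$-sets via Remark~(3), trading a citation of Lemma~\ref{lem:4.1} for the one-line gap computation. The underlying idea is the same; your packaging just uses less machinery.
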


\begin{proof}
It is obvious that $\mathcal{D}_{s,s+1,t}\supset\mathcal{D}_{s,s+1}$. On the other hand, let $\lambda\in\mathcal{D}_{s,s+1}$ with the associated $\textbf{n}$ in $\mathcal{N}_{s+1}$. Then $n_0=n_s=0$, $n_{j}n_{j+1}=0$ and $n_{j}\leq1$ for $j\geq0$, by Lemma \ref{lem:4.1} and Lemma \ref{lem:3.1}. Notice that $\lambda$ is contained in $\mathcal{P}_t$ if $n_{j+t}-n_{j}\leq\lfloor(r_{s+1}(j)+t)/(s+1)\rfloor$. Clearly, it holds since $\lfloor(r_{s+1}(j)+t)/(s+1)\rfloor\geq1\geq n_{j+t}-n_{j}$. Thus $\lambda\in\mathcal{D}_{s,s+1,t}.$
\end{proof}

\section{uncited references}
\cite{Armstrong2014, Aukerman2009, Brylawski1973, Ford2009, Sills2017, Wang2015}

\subsection*{Acknowledgments}
This work was completed when the author visited PSU. He is grateful for their hospitality. This work is supported by the National Natural Science Foundation of China (Grant No. 11571303) and China Scholarship Council. The author also thanks George Andrews and Shane Chern for kind comments.

\bibliographystyle{amsplain}

\end{document}